\title[OWL$_{2,1}$ regularization for rank-aware recovery]{%
Orthogonally weighted $\ell_{2,1}$  regularization for rank-aware joint sparse recovery: algorithm and analysis
}
\newtheorem{theorem}{Theorem}[section]
\newtheorem{proposition}[theorem]{Proposition}
\newtheorem{lemma}[theorem]{Lemma}
\newtheorem{corollary}[theorem]{Corollary}
\newtheorem{definition}[theorem]{Definition}
\newtheorem{remark}[theorem]{Remark}
\newtheorem{example}{Example}
\newcommand{\R}{\mathbb{R}}
\newcommand{\prox}{\operatorname{Prox}}
\DeclarePairedDelimiter{\norm}{\lVert}{\rVert}
\DeclarePairedDelimiter{\abs}{\lvert}{\rvert}
\DeclareMathOperator{\tr}{tr}
\newcommand{\St}{\mathbb{S}}
\newcommand{\fro}{\textnormal{Fro}}
\newcommand{\op}{\textnormal{op}}
\newcommand{\rank}{{\rm rank\,}}
\def\spark{\textnormal{spark\,}}
\newcommand{\owl}{\mathit{ow}\ell_{2,1}}
\newcommand{\ellone}{\ell_{2,1}}
\newcommand{\ellzero}{\ell_{2,0}}
\definecolor{darkgreen}{rgb}{0.0, 0.5, 0.0}
\definecolor{orange}{rgb}{0.8, 0.33, 0.0}
\DeclareMathOperator*{\argmin}{argmin}			
\begin{document}

\date{\today}
\author{Armenak Petrosyan} 
\address{
  School of Mathematics, Georgia Institute of Technology, 686 Cherry St NW, Atlanta, GA 30332, USA.
}
\email{apetrosyan3 (at) gatech.edu}
\author{Konstantin Pieper}
\email{pieperk (at) ornl.gov}
\author{Hoang Tran}
\address{Computer Science and Mathematics Division, Oak Ridge National Laboratory, Oak Ridge, TN 37831, USA.}
\email{tranha (at) ornl.gov}

\thanks{
Armenak Petrosyan's research was partially funded by a Georgia Tech-ORNL SEED grant. Armenak Petrosyan also thanks Christopher Heil for continued support during his employment at Georgia Institute of Technology. Konstantin Pieper acknowledges support through the Compression Methods for Streaming Scientific Data project. Hoang Tran acknowledges support from the Scientific Discovery through Advanced Computing (SciDAC) program through the FASTMath Institute under Contract No.\ DE-AC02-05CH11231. This manuscript has been authored by UT-Battelle, LLC, under Contract DE-AC05-00OR22725 with the US Department of Energy (DOE). The US government retains and the publisher, by accepting the article for publication, acknowledges that the US government retains a nonexclusive, paid-up, irrevocable, worldwide license to publish or reproduce the published form of this manuscript, or allow others to do so, for US government purposes. DOE will provide public access to these results of federally sponsored research in accordance with the DOE Public Access Plan (\url{http://energy.gov/downloads/doe-public-access-plan}). 
}

\maketitle

\begin{abstract}
 We propose and analyze an efficient algorithm for solving the joint sparse recovery problem using a new regularization-based method, named orthogonally weighted $\ell_{2,1}$ ($\mathit{ow}\ell_{2,1}$), which is specifically designed to take into account the rank of the solution matrix. This method has applications in feature extraction, matrix column selection, and dictionary learning, and it is distinct from commonly used $\ell_{2,1}$ regularization and other existing regularization-based approaches because it can exploit the full rank of the row-sparse solution matrix, a key feature in many applications. We provide a proof of the method's rank-awareness, establish the existence of solutions to the proposed optimization problem, and develop an efficient algorithm for solving it, whose convergence is analyzed. We also present numerical experiments to illustrate the theory and demonstrate the effectiveness of our method on real-life problems.

\end{abstract}
\section{Introduction}
\label{sec:intro}

The joint sparse recovery or multiple measurement vector (MMV) problem aims to find a solution to the matrix equation $AX = Y$, where $A\in \R^{M\times N}$ and $ Y\in \R^{M\times K}$ are given, and $X \in \R^{N \times K}$ is an unknown $s$-row sparse matrix, meaning that at most $s$ rows of $X$ are non-zero.  

The task of finding the solution of $AX=Y$ with the fewest non-zero rows in the noiseless case can be formulated as an optimization problem using mixed $\ell_{2,0}$ norms:
\begin{equation}\label{prob:ell20}
\min_{Z\in \R^{N\times K}}\ellzero(Z),\ \ \text{subject to}\ \ AZ=Y,
\end{equation}
where $\ellzero(Z)$ is the number of non-zero rows in $Z$. However, the main drawback of this method is that the optimization problem is computationally infeasible, as it results in an exhaustive search that is an NP-hard problem. To overcome this limitation, the $\ell_{2,1}$ norm regularization method is commonly used to solve the joint sparse recovery problem \cite{doi:10.1137/100785028} 
\begin{equation}\label{prob:l21}
\min_{Z\in \R^{N\times K}}\ellone(Z),\ \ \text{subject to}\ \ AZ=Y,
\end{equation}
where $\ellone(Z) =\norm{Z}_{2,1} = \sum_{n=1}^N\sqrt{z_nz_n^T}$,
and $z_n$ is the $n$-th row of $Z$. This method generalizes the $\ell_1$ sparsifying penalty to the multiple vector setting and is computationally feasible. However, this method has the drawback of being rank-blind, meaning it does not take into account the rank of the $s$-row sparse matrix \cite{davies2012rank}. In real-world scenarios, data typically has a maximum essential rank, equal to its essential sparsity. 

In \cite{petrosyan2019reconstruction}, it was argued that a rank-aware regularizer cannot be continuous and, thus, convex in all of $\R^{N\times K}$. A new  method for recovery of jointly sparse vectors in the form of a (non-convex) optimization problem was proposed 
\begin{equation}\label{prob:nonconvex}
    \min_{Z\in \R^{N\times K}}\owl(Z)\text{  subject to  } AZ=Y,
\end{equation}
where the regularizer, here we call the orthogonally weighted $\ell_{2,1}$ or $\mathit{ow}\ell_{2,1}$, is defined as 
\[
  \owl(Z)= \ellone\left(Z(Z^TZ)^{\dagger/2}\right)
  = \sum_{n=1}^N \sqrt{z_n(Z^TZ)^\dagger z_n^T},
\]
with \((\cdot)^\dagger\) denoting the Moore-Penrose pseudo-inverse and \((\cdot)^{\dagger/2}= ((\cdot)^\dagger)^{1/2}\) its square root for a given symmetric matrix. 
If the minimization is restricted to matrices of full rank, we obtain the functional
\(\owl(Z) = \ellone(Z(Z^TZ)^{-1/2})\). The papers
 \cite{mishali2008reduce, petrosyan2019reconstruction} also discussed a strategy to reduce the joint sparse problem~\eqref{prob:ell20} to a simpler setting where the measurement matrix has full  column-rank. 
This transformation can be achieved through techniques such as Singular Value Decomposition (SVD) or other factorization methods.
In particular, if $Y = Y'Q$ where $Y' \in \mathbb{R}^{M\times r}$ with $\rank(Y') = r$ and $Q \in \R^{r\times K}$ with $QQ^T = I \in \R^{r\times r}$, then for any solution $Z'$ of the optimization problem
\[
  \min_{Z \in \R^{K\times r},\, \rank(Z) = r} \;\ellzero(Z)\text{  subject to  } AZ = Y',
\]
the matrix $\bar{Z} = Z' Q$ is a solution of original problem~\eqref{prob:ell20}.
This strategy is independent of the chosen regularization functional and can be used for the noisy problem as well to reduce it to an approximate full column rank problem, and the resulting reconstruction error can be controlled; see \cite{petrosyan2019reconstruction} for more details.

However, \cite{petrosyan2019reconstruction} neither provided an affirmative conclusion on the rank-awareness, nor provided a general algorithm for implementation of $\owl$ regularization.
The algorithm proposed in~\cite{petrosyan2019reconstruction} relies on being able to determine the rank of the solution directly from the data and does not directly address the more practical case of noisy data discussed below.
In this paper, we aim to address these two important open questions by providing: i) a proof of the rank-awareness of the $\owl$ regularization, making it the first regularization-based method possessing this property; and ii) a computationally efficient algorithm for solving the optimization problem, supported by rigorous convergence analysis.





The naming of the functional is derived from the following. Note that, if $Z=U\Sigma V^T$ is the compact singular value decomposition of $Z$,
i.e., with \(U \in \R^{N\times r}\), \(V\in \R^{K\times r}\) orthogonal, $r= \rank(Z)$, and \(\Sigma \in \R^{r\times r}\) diagonal and s.p.d.,
then $Z(Z^TZ)^{\dagger/2} = UV^T$, thus it renormalizes $Z$  to its orthogonal component before applying the $\ell_{2,1}$ norm.
Also
\[
  \owl(Z)=\ellone(UV^T)=\ellone({U}),
\]
in other words, $\owl(Z)$ is the $\ell_{2,1}$ norm of an orthonormal basis of the column space of $Z$ and its value is independent of the choice of the basis. This and many other properties of the $\owl(Z)$ functional, like its locally Lipschitz continuity, are discussed in detail in Appendix~\ref{sec:rankaware}.
Consequently, when $Z$ is of $s$-row sparsity and of rank $s$, it can be seen that $\owl(Z)=\ellzero(Z)$, in other words it interpolates the values of $\ell_{2,0}$ on such matrices. However, unlike the latter, it is feasible to develop an efficient algorithm for solving the optimization problem with the $\mathit{ow}\ell_{2,1}$ regularizer, as we shall see in this effort. 

In the presence of measurement noise,  we consider the regularized formulation of~\eqref{prob:nonconvex}
\begin{equation}
\label{prob:nonconvex_reg}
\min_{Z\in \R^{N\times K} } \left(\owl(Z)+\frac{1}{2\alpha} \|AZ-Y\|_\fro^2\right).
\end{equation}
The term $1/(2\alpha) \|AZ-Y\|_\fro^2$ acts as a penalty for deviating from the measurements, where $\|\cdot\|_\fro$ denotes the Frobenius norm of a matrix. The quantity \(1/(2\alpha)\) controls the strength of the regularization or can be interpreted as a Lagrange multiplier for the constrained formulation
\begin{equation}
  \label{prob:nonconvex_constr}
\min_{Z\in \R^{N\times K} } \owl(Z)
\quad\text{subject to } \|AZ-Y\|_\fro \leq \delta.
\end{equation}
To simplify notation, we will primarily consider the regularized formulation of the problem as given in~\eqref{prob:nonconvex_reg}.

\subsection{Notational conventions }\label{sec:notation} 
Here we introduce various notation conventions aimed at enhancing the clarity and readability of the content. The true value of the matrix of interest is represented by $X \in \mathbb{R}^{N \times K}$. The optimal solution of a given regularization problem is represented by $\bar Z \in \mathbb{R}^{N \times K}$.
The rows of a given matrix $Z$ are denoted as $z_n \in \mathbb{R}^K$ for $n = 1,\dots,N$.
For a given symmetric positive matrix \(W\), the $W$-(semi-)norm of a vector $z \in \mathbb{R}^K$ is represented by $\|z\|_W = \sqrt{z^T W z} = \norm{W^{1/2}z}_2$, and the Euclidean norm is denoted by \(\|z\|_2\).
The $\ell_{W,p}$ norm for \(p\in [1,\infty)\) of $Z$ is represented by
\[
  \ell_{W,p}(Z)=\norm{Z}_{W,p}= \left(\sum_{n=1}^N\norm{z_n}^p_W\right)^{1/p}
\]
and for \(p= \infty\) the sum is replaced by a maximum, as usual.
Note that for \(p=2\) we obtain \(\norm{Z}_{W, 2} = \sqrt{\tr(ZWZ^T)}\) and the special case for the Euclidean norm is also denoted as the Frobenius norm \(\norm{\cdot}_{2,2} = \norm{\cdot}_{\fro}\).
The associated inner product on the space of matrices is denoted by \(\langle Z, Z'\rangle = \tr(Z^T Z')\).
Finally, by \(\norm{\cdot}_{\op}\) we denote the operator or induced matrix norm with respect to the Euclidean inner product.
For a given matrix \(M\), we denote by \(M^\dagger\) its Moore-Penrose pseudo-inverse, and for a given symmetric non-negative matrix \(M\) by \(M^{\dagger/2} = (M^\dagger)^{1/2}\) the square root of the pseudo-inverse.
The main functional of interest is represented by $\owl(Z) = \ellone(Z(Z^TZ)^{\dagger/2}) = \norm{Z}_{(Z^TZ)^\dagger,1}$.
  Note that
  $U = Z(Z^TZ)^{\dagger/2}$ is related  to the matrix sign function; see~\cite[Chapter~5]{higham2008functions} with the difference being that we use singular value calculus, as opposed to the standard (eigenvalue based) matrix calculus.
  In case of scalars or symmetric matrices the two definitions are identical. 

\subsection{Organization} 
The material is organized as follows: Section~\ref{sec:related} discusses the applications of joint sparse recovery and related works. Section~\ref{sec:rank_aware} proves the rank-awareness of $\owl$ regularization.
Section~\ref{sec:glob_min_noiseless} establishes conditions for the existence of global minimizers of~\eqref{prob:nonconvex} and~\eqref{prob:nonconvex_reg}.
The question of how well the solution of the regularized problem approximates the true value remains open.
In Section~\ref{sec:approx}, we provide a family of regularizers for an additional parameter \(\gamma\) that connects the $\owl$ regularizer at \(\gamma = 0\) and the convex \(\ellone\) regularizer at \(\gamma = 1\) and establish first-order conditions for the generalized problem.
This includes optimality conditions for the \(\owl\) regularized problem in the situation where \(Z^TZ\) has full rank. For other cases, we provide a convergence result for \(\gamma\to 0\), and rely on the optimality conditions for the ``relaxed problem'' for \(\gamma > 0\).
Section~\ref{sec:algorithm} proposes and analyzes a numerical algorithm for solving the optimization problem that is based either on a direct minimization of~\eqref{prob:nonconvex_reg} with a variable metric proximal gradient method or on a combination of this with a continuation strategy in \(\gamma\).
Section~\ref{sec:experiments} conducts experiments to demonstrate the performance of the algorithm.
All proofs concerning various necessary properties of the regularizer $\owl(Z)$ have been moved to the appendix to improve readability.

\section{Literature review}\label{sec:related}

\hspace{0.15in} \textbf{$\ell_1/\ell_2$ regularization.} Our method represents a generalization of $\ell_1/\ell_2$, which uses the ratio of $\ell_1$ and $\ell_2$ to model the sparsity \cite{hoyer2004non,doi:10.1137/13090540X}.
Indeed, in the special case of single vector recovery (i.e.\ for $K=1$), the problems~\eqref{prob:nonconvex} and~\eqref{prob:nonconvex_reg} are equivalent to the $\ell_1/\ell_2$ regularized problem due to
\begin{equation}
  \label{eq:l1overl2}
  \owl(Z)
  = \norm{Z(Z^T Z)^{\dagger/2}}_1
  = \begin{cases}
    \norm{Z}_1 / \norm{Z}_2 & \text{for } Z\neq 0, \\
    0 & \text{for } Z = 0,
  \end{cases}
  \qquad \text{if } Z \in \R^{N\times 1}.
\end{equation}
Theoretical analysis for $\ell_1/\ell_2$ has been developed in \cite{Yin2014RatioAD} for non-negative signals, \cite{rahimi2019scale} with a local optimality condition, and \cite{doi:10.1137/20M136801X}, where an analytic solution for the proximal operator is characterized, among others. Our proof of the existence of the global solution to~\eqref{prob:nonconvex} and~\eqref{prob:nonconvex_reg} here is built on a generalization of the \textit{spherical section property}, which has been studied in \cite{doi:10.1137/20M1355380,XU2021486} for the global optimality condition of $\ell_1/\ell_2$. The $\ell_1/\ell_2$ functional has been shown to enhance sparsity of the solutions and outperform $\ell_1$ on many test problems such as nonnegative matrix factorization and blind deconvolution \cite{5995521,JI2012295}.  However, the existing uniform recovery guarantees for $\ell_1/\ell_2$ remains too strict in the general, noisy setting (i.e., more restrictive than well-known $\ell_1$ recovery conditions such as the null space property), despite many recent theoretical advances.

\textbf{Joint sparse recovery.} The joint sparse recovery problem, also known as multiple measurement vectors, distributed compressed sensing or simultaneous sparse approximation, has garnered a lot of attention in the literature. 
Mathematically, this problem arises when we want to reconstruct multiple vectors which have the same sparsity support  using the same set of linear combinations for each. While one can certainly recover each vector independently, approximating them all at once should give superior results because we can take advantage of additional information about their shared support. To realize this potential, however, a joint sparse recovery algorithm is desired to be rank aware, that is, its performance improves with increasing rank of the solution matrix. 

Existing approaches for the joint sparse recovery can be roughly categorized into greedy methods \cite{1453780,TROPP2006572,GRSV08}, and algorithms based on mixed norm optimization \cite{1453780,TROPP2006589,FornasierRauhut08,EldarRauhut10,5452189}. Pioneering joint sparse methods are often straightforward extensions of algorithms for single vector recovery, whose recovery conditions are based on standard compressed sensing properties and typically turn out to be the same as those obtained from a single measurement problem. In such cases, the theoretical results do not show any advantage of the joint sparse approach. One of the first methods exploiting rank information is a discrete version of the MUSIC (MUltiple SIgnal Classification) algorithm \cite{544131}, where recovery guarantees better than those from single vector problem are proved when the solution matrix has full column-rank. Extensions of this work to non-full-rank cases have been developed in \cite{6158602} (subspace-augmented MUSIC), \cite{6122004} (compressive MUSIC) and \cite{7875091} (semi-supervised MUSIC). Also inspired by the MUSIC technique, \cite{davies2012rank} proposes the rank-aware orthogonal recursive matching pursuit (RA-ORMP) based on a greedy selection strategy, and further demonstrates the rank awareness and rank blindness for a large class of joint sparse recovery algorithms. That work proves that some of the most popular algorithms, including orthogonal matching pursuit and mixed $\ell_{q,1}$ norm for any $q\ge 1$, are effectively rank blind. More recent rank aware methods include Simultaneous Normalsize Iterative Hard Thresholding (SNIHT) \cite{6719509} and Rank Aware Row Thresholding (RART) \cite{blanchard2020rank}. Compared to the greedy approach, the integration of rank information into regularization-based algorithms seems an open problem. To date, all existing rank-aware algorithms belong to the greedy class.

\textbf{Applications.} The joint sparse recovery problem finds applications in various fields, including data analytics and imaging \cite{10.5555/1795114.1795154,6288484}, uncertainty quantification \cite{refId0}, and parameter identification of dynamical systems \cite{Schaeffer2017LearningDS}. Here, we will illustrate one particular area of our interest in detail, which is the Column Subset Selection Problem (CSSP), also known as the feature selection, or the dictionary selection problem. Suppose we have a set of data columns $Y\in \R^{M\times K}$ and a dictionary of features $A\in \R^{M\times N}$ (here $N$ represents the number of features in the dictionary and $M$ is the size of the features), and we want to extract a subset of the columns of $A$ such that $Y$ can be represented in terms of only $s$ columns of $A$. This problem exactly translates to the joint sparse recovery problem $Y=AX$ where $X$ is the matrix of coefficients and the non-zero rows correspond to the columns that have been selected. In the feature selection problem, the data is also the matrix $A$ (i.e., $Y=A$) and the goal is to select the most representative columns (features) of the matrix $A$. Thus, we aim to solve the problem
\begin{align}
\label{problem:CSSP}
  \min_Z\|A-AZ\|_\fro\;\text{subject to}\; \|Z\|_{2,0}\leq s.
\end{align}
This problem has garnered extensive attention in machine learning where it was often relaxed by mixed $\ell_{2,1}$ norm~\cite{10.5555/1795114.1795154,nie2010efficient,lu2019embedded,nardone_SMBA}. This has also led to various matrix decomposition techniques such as the CUR decomposition~\cite{hamm2020perspectives, cai2021robust}. Demonstrations of our $\owl$ regularization on this problem on some bioinformatics datasets will be shown in Section~\ref{sec:experiments}.

\textbf{Proximal algorithms.}
To our knowledge, this is the first paper that provides an iterative solution method for the problem~\eqref{prob:nonconvex_reg} aside from the initial work~\cite{petrosyan2019reconstruction}.
The methods we outline in Section~\ref{sec:algorithm} will be based on standard proximal gradient based methods for least squares regression with convex regularizers~\cite{doi:10.1137/080716542,chambolle-pock-2011}, and for more general non-smooth, convex optimization that encompasses joint sparse regression~\cite{doi:10.1080/02331930412331327157,FornasierRauhut08,doi:10.1137/060669498,tan2014joint,Dexter-SVVA22}.
In order to incorporate the nonconvex non-smooth aspects of the \(\owl\)-regularizer, we employ a first order convex approximation through a model function in every step of the method.
This allows us to write the new iteration with the help of a proximal mapping that is simple to calculate.
Convex model functions are also used in related methods for nonsmooth nonconvex optimization~\cite{NollProtRondepierre:2008,Noll:2010} or Trust-Region methods~\cite{ConnGouldToint:2000}.
Our method employs a specialized variable metric to achieve simple and efficient iterations, which requires a dedicated analysis and leads to a global convergence result.

As already pointed out before, for the scalar case \(K=1\) we obtain the \(\ell_1/\ell_2\) regularizer, and optimization methods have been proposed in~\cite{Yin2014RatioAD,rahimi2019scale,9057443,XU2021486,doi:10.1137/20M136801X,doi:10.1137/20M1355380}.
For \(K=1\), proximal method in this manuscript is similar to the one proposed in~\cite[Algorithm~2]{9057443} for the equality constrained formulation~\eqref{prob:nonconvex} and subsequently analyzed and generalized to include the noisy constrained formulation~\eqref{prob:nonconvex_constr} in~\cite{doi:10.1137/20M1355380}.
In this paper we focus on the noisy regularized formulation~\eqref{prob:nonconvex_reg}, which can also be used to approach the solutions of the equality constrained problem for \(\alpha \to 0\) and the solutions of the noisy constrained problem for \(\alpha \to \alpha_\delta\); see Section~\ref{sec:adapt_alpha}.
While it is easy to specialize the method proposed in this manuscript to \(K=1\) (see Remark~\ref{rem:algorithm_for_K1}), the generalization to larger \(K\) is not obvious, and requires additional effort.

Finally, since the \(\owl\) functional is discontinuous across different ranks, we can not rely on only gradient descent unless we know or can guess the optimal rank ahead of time~\cite{petrosyan2019reconstruction}.
As a remedy, we propose an optional Lipschitz-continuous relaxation of the problem for a parameter \(\gamma>0\), which we show converges to the original problem in the sense of \(\Gamma\)-convergence~\cite{dal2012introduction}.

\section{Rank-awareness}\label{sec:rank_aware}

Denote for any \(r\leq K\)
$$
\St(N,K,r)  = \{Z\in \R^{N\times K}: \rank(Z)=r\}.$$
When  the columns of $Z$ are linearly independent ($r=K$), we abuse the notation and write $\St(N,r) = \St(N,K,r)$. It is worth noting that the set $\St(N,K,r)$ is often referred to as the Stiefel manifold and it is a key object in differential geometry and optimization, with numerous applications in machine learning and data analysis. 

The following theorem \cite{lai2011null, davies2012rank} provides a necessary and sufficient condition for the exact recovery of sparse matrices from the $\ellone$ minimization problem~\eqref{prob:l21}:
\begin{theorem}\label{nspcond}
        Given a matrix $A\in \R^{M\times N}$, every  s-row sparse matrix $X\in \R^{N\times K}$  can be exactly recovered from~\eqref{prob:l21} if and only if $A$ satisfies the NSP: 
        $$
        \|x_I\|_{1}<\|x_{I^c}\|_{1},\ \forall x\in \ker(A)\setminus \{\mathbf{0}\},\ \forall I\subset\{1,\dots,N\}, \text{ with } |I|\leq s,
        $$ 
        where $ x_{I} $ is the vector that we get after nullifying all the entries of $ x $ except the ones with indexes in $ I $. Furthermore, if there exists an $ x\in \ker(A)$ such that $$\|x_I\|_1>\|x_{I^c}\|_1,\; |I|\leq s,$$
     then, for any $1\leq r\leq \min\{s,K\}$, there is an $s$-row sparse matrix $X\in \St(N,K,r)$  which cannot be recovered from~\eqref{prob:l21}. 
\end{theorem}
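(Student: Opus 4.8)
The plan is to treat the two assertions separately, reducing the ``if and only if'' to the scalar $\ell_1$ null space property and proving the final claim by an explicit high-rank construction. For the first assertion I would argue through two equivalences. The first is that (unique) recovery of every $s$-row sparse $X$ from~\eqref{prob:l21} is equivalent to the \emph{matrix $\ell_{2,1}$ null space property}
\[
  \ellone(H_I) < \ellone(H_{I^c}) \qquad\text{for all } H\in\R^{N\times K}\setminus\{0\}\text{ with }AH=0\text{ and all }|I|\le s,
\]
where $H_I$ zeroes the rows of $H$ outside $I$. This is standard: for ``$\Leftarrow$'', if $AX=Y$ with $X$ supported on some $I$, $|I|\le s$, and $Z\ne X$ is feasible, set $H=Z-X$ (so $AH=0$, $H\ne 0$), and the row-wise reverse triangle inequality gives $\ellone(Z)\ge\ellone(X)+\ellone(H_{I^c})-\ellone(H_I)>\ellone(X)$, so $X$ is the unique minimizer; for ``$\Rightarrow$'', if the matrix property fails at some $H,I$, then $X:=H_I$ and $Z:=-H_{I^c}$ are distinct feasible points for $Y:=AX$ with $\ellone(Z)\le\ellone(X)$, so $X$ is not the unique minimizer.

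The second equivalence is that the matrix $\ell_{2,1}$ NSP holds if and only if the stated vector $\ell_1$ NSP does. One direction is immediate: applying the matrix property to the rank-one matrix $H=x\,e_1^T$ recovers the vector NSP for any $x\in\ker A$. For the converse I would use the averaging identity $\|z\|_2=c_K^{-1}\int|\langle z,u\rangle|\,d\sigma(u)$, valid for $z\in\R^K$, where $\sigma$ is the uniform probability measure on the Euclidean unit sphere of $\R^K$ and $c_K=\int|u_1|\,d\sigma(u)>0$. Given $H$ with $AH=0$, $H\ne0$, rows $h_n$, and $|I|\le s$, for $\sigma$-a.e.\ $u$ the vector $Hu\in\ker A$ is nonzero (its exceptional set $\ker H$ is a proper subspace, hence $\sigma$-null) with components $\langle h_n,u\rangle$, so the vector NSP gives $\sum_{n\in I}|\langle h_n,u\rangle|<\sum_{n\in I^c}|\langle h_n,u\rangle|$ for $\sigma$-a.e.\ $u$; integrating over $\sigma$ and upgrading to a strict inequality (a nonnegative continuous function on the sphere that is positive a.e.\ has positive integral) yields $\ellone(H_I)<\ellone(H_{I^c})$.

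For the last claim I would construct the counterexample explicitly. First reduce to $|I|=s$ (enlarging $I$ only raises $\|x_I\|_1$ and lowers $\|x_{I^c}\|_1$, so the strict inequality persists; here $1\le r\le\min\{s,K\}$ forces $s\ge r$ and $K\ge r$). Pick $n_0\in I$ with $x_{n_0}\ne0$ (one exists since $\|x_I\|_1>\|x_{I^c}\|_1\ge0$), pick $r-1$ further indices of $I$, and fix orthonormal vectors $f_2,\dots,f_r$ in $e_1^\perp\subset\R^K$. For a parameter $M>1$, let $X$ be supported on $I$ with rows $x_n^{(X)}=-M x_n e_1+v_n$, where $v_{n_0}=0$, $v_n=f_j$ on the $r-1$ chosen indices, and $v_n=0$ on the remaining rows; then $\spn\{x_n^{(X)}:n\in I\}=\spn\{e_1,f_2,\dots,f_r\}$ has dimension $r$, so $\rank(X)=r$. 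Set $Z=X+x\,e_1^T$; then $AZ=AX=:Y$ since $x\in\ker A$, and $Z\ne X$ since $x\ne0$. A row-by-row computation gives
\[
  \ellone(X)-\ellone(Z)=\sum_{n\in I}\Big(\sqrt{M^2x_n^2+\|v_n\|_2^2}-\sqrt{(M-1)^2x_n^2+\|v_n\|_2^2}\Big)-\|x_{I^c}\|_1,
\]
and each summand converges to $|x_n|$ as $M\to\infty$ (it equals $|x_n|$ exactly at $n_0$, using $v_n\perp e_1$ for the limit elsewhere), so the right-hand side tends to $\|x_I\|_1-\|x_{I^c}\|_1>0$. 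Hence $\ellone(Z)<\ellone(X)$ for $M$ large, so the $s$-row sparse matrix $X\in\St(N,K,r)$ is not recovered from~\eqref{prob:l21}.

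The hard part will be this last construction: one must simultaneously force $\rank(X)=r$ \emph{exactly} and keep the ``$\ell_{2,1}$-gain'' $\ellone(X)-\ellone(Z)$ strictly positive, and these pull against each other, since any row of $X$ not collinear with the perturbation direction $e_1$ contributes strictly less than the maximal $|x_n|$ to the gain. Choosing the rank-boosting vectors $v_n$ orthogonal to $e_1$ decouples the two effects and makes the $M\to\infty$ limit transparent, but the bookkeeping---especially in the degenerate case where $x$ has very few nonzero coordinates inside $I$---needs care. A minor separate point is the strictness upgrade in the spherical-averaging step of the first assertion.
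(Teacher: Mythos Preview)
Your proof is correct, but note that the paper does not actually prove this theorem: it is stated as a cited result from \cite{lai2011null, davies2012rank}, so there is no in-paper proof to compare against.

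Your route for the first assertion---reduce to the matrix $\ell_{2,1}$ null space property, then collapse it to the scalar $\ell_1$ NSP via the spherical-averaging identity $\|z\|_2 = c_K^{-1}\int |\langle z,u\rangle|\,d\sigma(u)$---is the standard argument in the cited literature, and the strictness upgrade (positive a.e.\ continuous integrand has positive integral) is fine. For the second assertion, your explicit rank-$r$ construction is correct: placing the rank-boosting vectors $v_n$ orthogonal to the perturbation direction $e_1$ is exactly the right decoupling, and the limit $\sqrt{M^2x_n^2+\|v_n\|^2}-\sqrt{(M-1)^2x_n^2+\|v_n\|^2}\to|x_n|$ holds whether $x_n$ vanishes or not. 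The degenerate case you worry about (few nonzero $x_n$ inside $I$) causes no trouble: if $x_{n_j}=0$ on a chosen index the row is simply $f_j$, which still contributes the needed direction to the span, and the corresponding summand in the $\ell_{2,1}$ difference is zero as it should be.
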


The following theorem (see \cite{petrosyan2019reconstruction} for details) highlights the difference in behavior between the $\ell_{2,1}$ norm and the $\ell_{2,0}$ norm regularizers. The $\ell_{2,1}$ norm regularizer is not rank-aware, while the $\ell_{2,0}$ norm regularizer is rank-aware. The smallest number of linearly dependent columns in a matrix $A$ is referred to as the spark of $A$ and is denoted by $\spark(A)$.

\begin{theorem}\label{thm:rankrecequiv}
The following statements are equivalent:
\begin{enumerate}
\item\label{cond:rankrecequiv_1}  $ s\leq (\spark(A)-1+r)/2 $, 
\item For every $s$-row sparse matrix $ X\in \St(N,K,r)$, $ X $ is the unique solution of $AZ=Y$ in the set $\St(N,K,r)$.
\item For every $s$-row sparse matrix $ X\in \St(N,K,r)$, $ X $ is the unique solution of the minimization problem~\eqref{prob:ell20}
where $ Y=AX $.
\end{enumerate}
\end{theorem}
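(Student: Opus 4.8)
The plan is to establish (1) $\Leftrightarrow$ (2) and (1) $\Leftrightarrow$ (3); since (2) and (3) are each equivalent to (1), they are equivalent to one another. Throughout I will use the elementary consequence of (1) that $s<\spark(A)$: since $r=\rank(X)\le\ellzero(X)\le s$, (1) gives $2s\le\spark(A)-1+r\le\spark(A)-1+s$, i.e.\ $s\le\spark(A)-1$. Hence $A$ is injective on any set of at most $s$ columns, so $\rank(AZ)=\rank(Z)$ for every $s$-row sparse $Z$; in particular $\rank(Y)=\rank(AX)=r$, and any $s$-row sparse solution $Z$ of $AZ=Y$ automatically lies in $\St(N,K,r)$. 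This lets me treat (2) and (3) together: it suffices to show that, under (1), the only $s$-row sparse $Z$ with $AZ=Y$ is $Z=X$. This is exactly (2); and it yields (3) because any $\ellzero$-minimizer $Z$ has $\ellzero(Z)\le\ellzero(X)\le s$, hence is $s$-row sparse and therefore equal to $X$, while $X$ itself is feasible.

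For the core uniqueness claim, suppose $X,Z$ are $s$-row sparse with $AX=AZ=Y$ and put $W=X-Z$; every column of $W$ lies in $\ker(A)$ and $\supp(W)\subseteq\supp(X)\cup\supp(Z)$. I will bound $\ellzero(W)$ from below and above and show the two are incompatible unless $W=0$. Lower bound (spark): choosing $\rank(W)$ linearly independent columns of $W$, all supported in $\supp(W)$ and all in $\ker(A)$, shows the submatrix of $A$ with columns indexed by $\supp(W)$ has nullity at least $\rank(W)$, hence rank at most $\ellzero(W)-\rank(W)$, hence admits a linear dependency among at most $\ellzero(W)-\rank(W)+1$ of its columns; these being columns of $A$, this gives $\spark(A)\le\ellzero(W)-\rank(W)+1$. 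Upper bound (support overlap): on the subspace $\ker(W)\subseteq\R^K$, of dimension $K-\rank(W)$, the matrices $X$ and $Z$ coincide, so $X(\ker W)=Z(\ker W)$; by rank--nullity for the restriction of $X$ this subspace of $\R^N$ has dimension at least $r-\rank(W)$, and it consists of vectors supported in $\supp(X)$ and also in $\supp(Z)$, hence in $\supp(X)\cap\supp(Z)$; therefore $|\supp(X)\cap\supp(Z)|\ge r-\rank(W)$ and $\ellzero(W)\le|\supp(X)|+|\supp(Z)|-|\supp(X)\cap\supp(Z)|\le 2s-r+\rank(W)$. Chaining the two bounds gives $\spark(A)\le 2s-r+1$, contradicting (1); hence $W=0$.

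For the converse I argue by contraposition: if (1) fails I construct two distinct $s$-row sparse matrices of rank $r$ with the same image under $A$ (killing (2)), choosing them with minimal row support so that (3) fails as well. If $\spark(A)\le s$, take $X$ whose support has size at most $s$ and contains a minimal linearly dependent column set $I_0$ of $A$, and perturb it by $w_0 v^T$, where $w_0\in\ker(A)$ is the associated dependency vector (supported on $I_0$) and $v$ is a small generic element of the row space of $X$; the perturbed matrix stays in $\St(N,K,r)$, is still $s$-row sparse, and differs from $X$, while $A(X+w_0v^T)=AX$. If instead $s<\spark(A)$ but (1) still fails, split $I_0=I_1\sqcup I_2$ with $|I_1|=s$ and $|I_2|=\spark(A)-s\le s-r+1$ (the failure of (1) is precisely what makes this possible), fix $C\subseteq I_1$ with $|C|=r-1$, and define $X,Z$ whose first $r$ columns carry $(w_0)_{I_1}$ resp.\ $-(w_0)_{I_2}$ in the first coordinate and a copy of $I_{r-1}$ on the rows indexed by $C$ in the remaining coordinates (zeros elsewhere); then $X-Z=w_0e_1^T$ has columns in $\ker(A)$, both matrices are $s$-row sparse of rank $r$, and $X\neq Z$. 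This is the $\ellzero$ analogue of the converse construction in Theorem~\ref{nspcond}.

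The step I expect to be the crux is the support-overlap bound in the core argument: it is the only place where the full rank $r$ of $X$ and $Z$ (rather than merely their row-sparsity) enters, and it is exactly what upgrades the naive estimate $\spark(A)\le\ellzero(W)+1\le 2s+1$ — which would only yield recovery up to sparsity about $\spark(A)/2$, the rank-blind regime — to the rank-aware bound. Getting the dimension count honest, in particular verifying that $X$ and $Z$ genuinely agree on an $(r-\rank W)$-dimensional subspace of their common column space and that this subspace is coordinate-localized to $\supp(X)\cap\supp(Z)$, is the delicate point; the remainder is bookkeeping with supports and spark.
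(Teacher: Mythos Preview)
The paper does not prove this theorem; it simply cites \cite{petrosyan2019reconstruction} for the argument. So there is no in-paper proof to compare against, and your attempt must stand on its own.

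Your forward direction has a real gap. Assuming $W=X-Z\neq 0$, your two bounds yield
\[
\spark(A)\ \le\ \ellzero(W)-\rank(W)+1\ \le\ (2s-r+\rank(W))-\rank(W)+1\ =\ 2s-r+1,
\]
while condition (1) is precisely $2s-r+1\le\spark(A)$. These are compatible (they force equality), so you do \emph{not} obtain the contradiction you claim. The equality case is even realized: with
\(A=\bigl(\begin{smallmatrix}1&0&1&1\\0&1&1&-1\end{smallmatrix}\bigr)\), $\spark(A)=3$, $s=r=K=2$, take $X$ equal to $I_2$ on rows $\{1,2\}$ and $Z$ with rows $(\tfrac12,\tfrac12)$, $(\tfrac12,-\tfrac12)$ on $\{3,4\}$; then $AX=AZ=I_2$, both are $2$-row sparse of rank $2$, yet $X\neq Z$. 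So (2) and (3) fail while (1) holds with equality. Either the statement as transcribed carries an off-by-one at the boundary, or (1) is intended strictly; your argument is correct for the strict version but not for the one written.

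Two smaller points. First, you interpret (2) as uniqueness among $s$-row sparse solutions and then note these automatically have rank $r$; but (2) as written claims uniqueness among \emph{all} matrices in $\St(N,K,r)$, which is strictly stronger and in fact cannot hold whenever $\ker(A)\neq\{0\}$ (add a kernel vector times a generic row vector to $X$). Second, your converse construction is only sketched: in both cases you must verify that the exhibited matrices have rank \emph{exactly} $r$ (not merely $\le s$) and, for the $\ellzero$ failure, that one of them actually attains the minimum of \eqref{prob:ell20}.
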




In the above theorem, the least restrictive condition occurs when the rank of the row-sparse matrix $X$ is equal to its sparsity, i.e., $r = s$. In fact, in practical applications, the row-sparse matrix $X$ in the joint sparse recovery problem is often of maximum rank, i.e., it is approximately $s$-row and also $s$-row sparse. In such a scenario,  the condition specified in Theorem~\ref{thm:rankrecequiv} simplifies to $s < \spark(A)$.    Hence, the assumption $s<\spark(A)$ is consistently made throughout the manuscript.

\begin{theorem} \label{thm:unique_is_truth} Given a matrix $A\in \R^{M\times N}$ with the property $s < \spark(A)$, let $X\in \St(N,K,s)$ represent an $s$-row sparse matrix, and define $Y = AX$.  Then $X$ is the unique solution to~\eqref{prob:nonconvex}. 
\end{theorem}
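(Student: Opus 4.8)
The plan is to squeeze the value of $\owl$ on the feasible set between $\rank(Z)$ and $s$, and then invoke Theorem~\ref{thm:rankrecequiv} to pin down the unique minimizer. The starting point is the lower bound
\[
  \owl(Z) \;\ge\; \rank(Z), \qquad\text{with equality if and only if}\quad \ellzero(Z) = \rank(Z),
\]
which I would establish from the SVD representation $\owl(Z) = \ellone(U) = \sum_{n=1}^N \norm{u_n}_2$, where $Z = U\Sigma V^T$ is the compact SVD and $u_n$ is the $n$-th row of $U$. Since $UU^T$ is an orthogonal projection of rank $r = \rank(Z)$, its diagonal entries $\norm{u_n}_2^2$ lie in $[0,1]$; hence $\norm{u_n}_2 \ge \norm{u_n}_2^2$ and $\sum_n \norm{u_n}_2 \ge \sum_n \norm{u_n}_2^2 = \tr(UU^T) = r$. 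Equality forces $\norm{u_n}_2 \in \{0,1\}$ for every $n$, i.e.\ exactly $r$ rows of $U$ (equivalently of $Z$) are nonzero; conversely, if $\ellzero(Z) = r$, the $r$ nonzero squared row norms of $U$ sum to $r$ and each is at most $1$, so each equals $1$. (This is the interpolation property $\owl(Z) = \ellzero(Z)$ on $s$-row sparse rank-$s$ matrices mentioned in Section~\ref{sec:intro}; the detailed verification is in Appendix~\ref{sec:rankaware}.)

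Next I would show that feasibility forces $\rank(Z)\ge s$. Writing $S = \supp(X)$, the hypothesis that $X\in\St(N,K,s)$ is $s$-row sparse gives $\abs{S} = s$ (since $\rank(X)\le\abs{S}\le s$) with $X_S\in\R^{s\times K}$ of rank $s$, and since $s<\spark(A)$ the submatrix $A_S$ has full column rank $s$; therefore $\rank(Y) = \rank(A_S X_S) = \rank(X_S) = s$. Consequently any $Z$ with $AZ = Y$ satisfies $\rank(Z)\ge\rank(AZ) = s$. Combining this with the first step: $\ellzero(X) = s = \rank(X)$ (as $\rank(X)\le\ellzero(X)\le s$), so $X$ is feasible with $\owl(X) = s$, and every feasible $Z$ obeys $\owl(Z)\ge\rank(Z)\ge s = \owl(X)$. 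Hence $X$ is a minimizer, and any minimizer $Z$ has all these inequalities tight, i.e.\ $\rank(Z) = s$ and $\ellzero(Z) = \rank(Z) = s$; thus $Z\in\St(N,K,s)$ is $s$-row sparse and solves $AZ = Y = AX$.

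Finally, uniqueness follows from Theorem~\ref{thm:rankrecequiv} with $r = s$: condition~\eqref{cond:rankrecequiv_1} there reads $s\le(\spark(A)-1+s)/2$, which is exactly the standing assumption $s<\spark(A)$, so statement~(2) of that theorem applies and $X$ is the \emph{unique} solution of $AZ = Y$ in $\St(N,K,s)$; hence the minimizer $Z$ from the previous paragraph equals $X$.

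The routine bookkeeping is in the rank/spark estimates of the second paragraph. The one place that genuinely carries the argument — and the step I would isolate as a standalone lemma (or cite from the appendix) — is the lower bound $\owl(Z)\ge\rank(Z)$ together with its equality characterization; everything else is squeezing and an appeal to the already-established $\ellzero$ recovery theorem.
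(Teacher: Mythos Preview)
Your proof is correct and follows the same approach as the paper: use Lemma~\ref{lem:Psibound} (the lower bound $\owl(Z)\ge\rank(Z)$ with its equality characterization, which you re-derive in-line), combine with $\rank(Z)\ge\rank(AZ)=\rank(X)=s$ for feasible $Z$, and then invoke Theorem~\ref{thm:rankrecequiv} with $r=s$ for uniqueness. The paper's version is terser---it cites an external proposition for the rank preservation and leaves the final appeal to Theorem~\ref{thm:rankrecequiv} implicit---but the logic is identical.
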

\begin{proof}From Prop.~1 in \cite{petrosyan2019reconstruction}, since $s<\spark(A)$, we have $\rank(AX)=\rank(X)$. Let $Z\in \R^{N\times K}$ be such that $AX=AZ$, then $\rank(Z)\geq \rank(AZ)=\rank(X)$. Combining with Lemma~\ref{lem:Psibound}, $\ellzero(Z)\ge \ellzero(X)$, with equality holding if and only if $Z$ is $s$-row sparse.
\end{proof}

\begin{corollary} The $\owl$ regularization in~\eqref{prob:nonconvex} is rank aware. 
\end{corollary}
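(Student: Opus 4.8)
My plan is to read the corollary as the assertion that $\owl$ regularization satisfies the definition of rank awareness from~\cite{davies2012rank}: the exact-recovery guarantee becomes less restrictive as the rank $r=\rank(X)$ of the sought row-sparse matrix grows, in contrast to a rank-blind regularizer whose guarantee is independent of $r$. The benchmark is $\ellzero$ minimization, for which Theorem~\ref{thm:rankrecequiv} certifies recovery of every $s$-row sparse $X\in\St(N,K,r)$ exactly when $s\le(\spark(A)-1+r)/2$; over $r=1,\dots,s$ this interpolates from the rank-blind condition $2s\le\spark(A)$ at $r=1$ down to the far weaker $s<\spark(A)$ at $r=s$, whereas $\ellone$ stays pinned to essentially the $r=1$ condition --- the order-$s$ null space property, which in particular forces $\spark(A)>2s$ --- for every $r$, by Theorem~\ref{nspcond}. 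So the task reduces to showing that the $\owl$ guarantee genuinely depends on $r$ and, at large rank, tracks the $\ellzero$ benchmark rather than the rank-blind one.

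For that I would simply invoke Theorem~\ref{thm:unique_is_truth}, which already handles the decisive case $r=s$ (also the case most relevant in practice, per the discussion preceding it): whenever $s<\spark(A)$, every $s$-row sparse $X\in\St(N,K,s)$ is the unique solution of~\eqref{prob:nonconvex}. This matches the $\ellzero$ guarantee of Theorem~\ref{thm:rankrecequiv} at $r=s$, and it is strictly weaker than the order-$s$ null space property required for uniform $\ellone$-recovery (which forces $\spark(A)>2s$, cf.\ Theorem~\ref{nspcond}). Hence the recovery guarantee of $\owl$ improves with the rank and is not tied to the rank-blind threshold, which is precisely rank awareness. In this reading there is no further work to do: all the content sits in Theorem~\ref{thm:unique_is_truth}.

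The honest caveat --- and where I expect the real difficulty to lie if one wants the strongest possible statement --- is the intermediate regime $1<r<s$: unique $\owl$-recovery of $X\in\St(N,K,r)$ under $s\le(\spark(A)-1+r)/2$. The natural route is to note that any feasible $Z$ has $\rank(Z)\ge\rank(AZ)=\rank(AX)=r$ by Prop.~1 of~\cite{petrosyan2019reconstruction} (valid since $s<\spark(A)$), and that condition~(2) of Theorem~\ref{thm:rankrecequiv} makes $X$ the unique feasible matrix of rank exactly $r$, so any other feasible $Z$ has $\rank(Z)\ge r+1$; one would then set the lower bound $\owl(Z)\ge\rank(Z)$ (from Lemma~\ref{lem:Psibound} together with the orthonormal-basis representation $\owl(Z)=\ellone(U)$) against an upper bound on $\owl(X)$. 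The obstacle is that off the full-rank case the interpolation identity $\owl(X)=\ellzero(X)$ no longer holds --- $\owl(X)$ can be as large as $\sqrt{rs}$ --- so the crude bound $\rank(Z)\ge r+1$ is not decisive, and a finer, direction-sensitive comparison of $\owl$-values would be needed (consistent with the fact that already for $K=1$, the $\ell_1/\ell_2$ regularizer, sharp recovery guarantees at $r=1$ are unavailable). Since the corollary asks only for rank awareness, not the sharpest threshold at every rank, I would let the $r=s$ case of Theorem~\ref{thm:unique_is_truth} carry the statement.
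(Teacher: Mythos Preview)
Your proposal is correct and takes essentially the same approach as the paper: invoke Theorem~\ref{thm:unique_is_truth} at $r=s$ to get recovery under the mere condition $s<\spark(A)$, and contrast this with the necessary failure at $r=1$ in the regime $\spark(A)/2<s<\spark(A)$. The paper's proof is just the terse two-sentence version of this, appealing directly to Theorem~\ref{thm:rankrecequiv} (not Theorem~\ref{nspcond}) for the $r=1$ side, since the equivalence there already shows that \emph{no} method---hence in particular $\owl$---can recover every $s$-row sparse rank-one matrix once $s>(\spark(A)-1+1)/2$; your detour through the $\ellone$/NSP comparison and the discussion of intermediate ranks are extra commentary rather than a different argument.
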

\begin{proof}
According to Theorem~\ref{thm:rankrecequiv}, not all $s$-row sparse matrices $X \in \St(N, K, 1)$ can be uniquely recovered from the equation $AZ = Y$ where $Y = AX$, when $\spark({A}) / 2 < s < \spark({A})$ and $\spark(A) > 2$. However, as per Theorem~\ref{thm:unique_is_truth}, all $s$-row sparse matrices $X \in \St(N, K, s)$ can be uniquely recovered using the method described in~\eqref{prob:nonconvex}.
\end{proof}

For a given matrix $A$, if ${\spark(A)}/{2} < s < \spark(A)$ and $r<s$, it is not necessarily guaranteed that the solution to the joint sparse recovery problem exists or is unique. In this context, Section~\ref{sec:glob_min_noiseless} introduces a condition named spherical section property to ensure the existence of the global solution.

Earlier, we discussed that when the rank is equal to one, the $\owl$ reduces to $\ell_1/\ell_2$.
Below we consider examples in which the $\ell_1/\ell_2$ fails to recover a single vector but if we add another vector along with the corresponding measurements, then we successfully recover both.

\begin{example} \label{ex:ssp_fail_1} Consider a matrix $A\in \mathbb{R}^{2\times 3}$ and $X\in \mathbb{R}^{3\times 1}$, where
\begin{equation*}
A = 
\begin{pmatrix}
c & 1 & 0 \\
c & 0 & 1 
\end{pmatrix},
\quad 
X=\begin{pmatrix}
0\\
1 \\
1
\end{pmatrix}
\quad 
\text{  and  }\quad  c \geq 0.
\end{equation*}
Then the measurement matrix $Y$ is given by $Y = AX=
(1, 1)^T \in \R^{2}$.
A solution of $AZ=Y$ has the form $Z=(\tau ,1-\tau c,1-\tau c)^T,\, \tau\in \R$. For such \(Z\) we have 
\[
  \owl(Z)
  = \frac{\norm{Z}_{1}}{\norm{Z}_2}
  = \frac{|\tau|+2|1 - \tau c|}{\sqrt{\tau^2 + 2(1 - \tau c)^2}}.
\]
{For \(c = 0\), the functional assumes a local minimum with value \(\sqrt{2}\) at zero, but the global infimum is given by one for \(\abs{\tau}\to\infty\). However, then \(\spark(A) = 1\) which is not sufficient for recovery of 2-sparse vectors with any method, since there exists a 1-sparse vector in the kernel.
For \(c > 0\) we have \(\spark(A) = 3\) and \(\owl(Z)\) attains minimum at  $\tau = 1/c$, different from the true solution at \(\tau = 0\)} (note, however, that the sparsest solution is at $\tau=1/c$, and the true solution $X$ is not the sparsest here).
Yet, by adding another column to the matrix $X$ to make it 2-row sparse with rank two, 
\[X=\begin{pmatrix}
0 & 0\\
1 & 1 \\
1 & -1
\end{pmatrix}
\quad\text{and the observation }
Y = AX=
\begin{pmatrix}
1 & 1\\
1 & -1
\end{pmatrix},
\]
the problem~\eqref{prob:nonconvex} has a unique solution that is equal to $X$. This follows from the fact that $r = s = 2$ and $s < \spark(A) = 3$. By applying Theorem~\ref{thm:unique_is_truth}, we can conclude that the true solution is recovered.
\end{example}

{
The next example, in a similar spirit, demonstrates that the functional can even fail to have a minimizer for large \(\spark(A)\), if the rank is smaller than the sparsity.
}

\begin{example}
\label{ex:ssp_fail_2} For a $c > 0$, let
\begin{equation*}
A = 
\begin{pmatrix}
c & 1 & 0 &  0 & 0 \\
c & 0 & 1 & 0 & 0\\
0 & 0 & 0 & 1 & 0\\
 0 &0 & 0 &0 &1
\end{pmatrix},
\quad 
X=\begin{pmatrix}
0\\
0\\
0\\
1 \\
1
\end{pmatrix}
\quad 
\text{  and  }\quad 
Y = AX=
\begin{pmatrix}
0\\
0\\
1 \\
1
\end{pmatrix}.
\end{equation*}
Any solution of $AZ=Y$ has the form $Z=(\tau,-\tau c,-\tau c, 1 ,1)^T,\, \tau\in \mathbb{R}$. For such \(Z\), we have 
\[
\owl(Z) 
= \frac{\norm{Z}_{1}}{\norm{Z}_2}
= \frac{2+(1+2c)|\tau|}{\sqrt{2 + (1+2c^2)\tau^2 }}.
\]
The critical points in terms of $\tau$ are $\pm (1+2c)/(1+2c^2)$, $\pm \infty$, and $0$ with local minimum at the latter. However, the local minimum at zero is only a global one for $c > 1/4$,
due to \(\lim_{|\tau|\to \infty} \owl(Z) = (1 + 2c)/\sqrt{1 + 2 c^2} > \sqrt{2} = \owl(Z)\rvert_{\tau=0}\),
and for \(c < 1/4\) the infimum is approached for $\tau \to \pm \infty$.

As before, if we add another linearly independent column with the same sparsity pattern to $X$, say the column 
\(X_2 = ( 0,0,0,1,-1 )^T\), then $\rank(X)=2$ and with Theorem~\ref{thm:unique_is_truth} and $s=2<\spark(A)=3$ the problem~\eqref{prob:nonconvex} has a unique solution equal to $X$.
\end{example}


\section{Existence of a global minimizer}\label{sec:glob_min_noiseless}
Here we  turn to the question of showing that the global minimizers of the noise free~\eqref{prob:nonconvex} and the noisy unconstrained~\eqref{prob:nonconvex_reg} problems indeed exist under certain conditions on $A$. They do not have to exist in general.

To illustrate this fact, let us consider for the moment the case \(K=1\), where the functional \(\owl\) is identical to the \(\ell_1/\ell_2\) functional~\eqref{eq:l1overl2}.
For the existence of the global minimum of the cost function with the $\ell_1/\ell_2$ regularizer, it is often required that $A$ satisfies the $s$-spherical section property. An example can be found in \cite[Theorem 3.4]{doi:10.1137/20M1355380}. In this section we aim to extend these results to joint sprase recovery problem with  the $\owl$ regularizer. The failure of minimizers to exist without appropriate assumptions has been illustrated in the previous section.







\begin{definition}
The rank-$r$ singularity set of the matrix $A$ is defined as
\[\Omega_{r} (A)=\{Z\in\R^{N\times K}:\; \rank(Z)\geq r, \, \rank(AZ)<\rank(Z)\}.\]
\end{definition}
This set contains all matrices of rank \(k\) where some linear combination of the columns is in the kernel of \(A\).
Notice that the rank-$1$ singularity set for $K=1$ is the nontrivial part of the kernel:
For \(K = 1\) we have \(\Omega_1(A) = \ker (A) \setminus \{\,0\,\}\). This justifies the following definition as an extension of the \(s\)-spherical section property (cf.~\cite{vavasis2009derivation, doi:10.1137/20M1355380}).

\begin{definition}[$(r,s)$-spherical section property]
\label{def:SSP}
Let matrix $A\in \R^{M\times N}$. We say that $A$ satisfies the $(r,s)$-spherical section property or $(r,s)$-SSP if, for all $Z \in \Omega_r(A)$,
\begin{align*}
    \owl(Z) > 
    \sqrt{rs}.
\end{align*}
\end{definition}

\begin{lemma}\label{lem:iota}
The following infimum is attainable
\[\iota_r(A)=\inf_{Z\in \Omega_r(A)} \owl(Z). \]
Moreover, for $r<\spark(A)$,  $\iota_r(A)>r$.
\end{lemma}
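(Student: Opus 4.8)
The plan is to use the fact that $\owl$ depends only on the column space of its argument. If $Z$ has rank $k$ and $U\in\R^{N\times k}$ has orthonormal columns spanning the column space of $Z$, then $\owl(Z)=\owl(U)=\ellone(U)$ (basis-independence, cf.\ the singular-value-decomposition discussion in Section~\ref{sec:intro} and Appendix~\ref{sec:rankaware}), and $AU$ and $AZ$ then have the same column space, so $Z\in\Omega_r(A)$ if and only if $U\in\Omega_r(A)$. Hence (if $\Omega_r(A)=\emptyset$ then $\iota_r(A)=+\infty$ and there is nothing to prove; otherwise)
\[
  \iota_r(A)=\inf\bigl\{\,\ellone(U):\ r\le k\le\min(N,K),\ U\in\R^{N\times k},\ U^TU=I_k,\ \rank(AU)<k\,\bigr\},
\]
so the task reduces to minimizing the \emph{continuous} functional $\ellone$ over a subset of finitely many Stiefel manifolds $\{U\in\R^{N\times k}:U^TU=I_k\}$, each of which is compact (closed and bounded, since $\|U\|_\fro^2=k$).

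For attainability, take a minimizing sequence $Z_j\in\Omega_r(A)$. Since $\rank(Z_j)$ ranges over the finite set $\{r,\dots,\min(N,K)\}$, after passing to a subsequence it equals some constant $k$; replacing $Z_j$ by an orthonormal basis $U_j$ of its column space gives $U_j^TU_j=I_k$, $\rank(AU_j)\le k-1$, and $\ellone(U_j)=\owl(Z_j)\to\iota_r(A)$. By compactness of the Stiefel manifold a further subsequence converges, $U_j\to U$; then $U^TU=I_k$ (so $\rank(U)=k\ge r$), $\ellone(U)=\lim_j\ellone(U_j)=\iota_r(A)$ by continuity of $\ellone$, and $\rank(AU)\le\liminf_j\rank(AU_j)\le k-1<k$ by lower semicontinuity of the rank. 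Hence $U\in\Omega_r(A)$ and $\owl(U)=\iota_r(A)$, so the infimum is attained.

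For the strict bound when $r<\spark(A)$: for any $U$ with $U^TU=I_k$ each row obeys $\|u_n\|_2^2=(UU^T)_{nn}\le1$ (a diagonal entry of an orthogonal projector), whence
\[
  \owl(U)=\ellone(U)=\sum_{n=1}^N\|u_n\|_2\ \ge\ \sum_{n=1}^N\|u_n\|_2^2=\|U\|_\fro^2=k ;
\]
in particular $\owl(Z)\ge\rank(Z)\ge r$ for every $Z\in\Omega_r(A)$ (this bound and its equality case are also among the properties of $\owl$ used in the proof of Theorem~\ref{thm:unique_is_truth}, cf.\ Lemma~\ref{lem:Psibound}), so $\iota_r(A)\ge r$. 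Let $U$ be the minimizer obtained above and suppose $\owl(U)=\iota_r(A)=r$. Then $r=\owl(U)\ge k\ge r$ forces $k=r$ and $\sum_n\|u_n\|_2=\sum_n\|u_n\|_2^2$, hence $\|u_n\|_2\in\{0,1\}$ for every $n$, i.e.\ $U$ is $r$-row sparse with $\rank(U)=r$. Since $U\in\Omega_r(A)$ we have $\rank(AU)<r$, so $AUv=0$ for some $v\neq0$; as $\rank(U)=r$ the vector $Uv$ is nonzero, lies in $\ker A$, and is supported on the (at most $r$) nonzero rows of $U$, so $0<\|Uv\|_0\le r<\spark(A)$, contradicting the definition of the spark. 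Therefore $\owl(U)>r$, i.e.\ $\iota_r(A)>r$.

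The main obstacle I expect is the non-compactness of $\Omega_r(A)$: because $\owl$ is invariant under scaling and right multiplication by invertible matrices, and the rank conditions defining $\Omega_r(A)$ are not closed, a minimizing sequence need not converge as is. Normalizing to a union of Stiefel manifolds removes the first difficulty, and the second is handled by the lower semicontinuity of the rank, which is precisely what keeps the limit inside $\Omega_r(A)$ — it prevents $\rank(AU)$ from jumping up to $\rank(U)$ in the limit. The remaining ingredients (the Cauchy--Schwarz-type inequality $\owl\ge\rank$ with its equality analysis, and the reduction to the spark) are routine.
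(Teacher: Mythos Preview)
Your proof is correct and follows essentially the same approach as the paper's: both reduce the infimum over $\Omega_r(A)$ to an infimum of the continuous functional $\ellone$ over the compact sets $\{U\in\R^{N\times k}:U^TU=I_k,\ \rank(AU)<k\}$ for $k\ge r$, and both derive the strict inequality by combining the bound $\owl(Z)\ge\rank(Z)$ (with its equality case) with the observation that a rank-$r$, $r$-row-sparse $U$ in $\Omega_r(A)$ would produce a nonzero $r$-sparse vector in $\ker A$, contradicting $r<\spark(A)$. Your argument is slightly more explicit about why the constraint set is closed (via lower semicontinuity of the rank), but otherwise the two proofs coincide.
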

\begin{proof}
Let $Z\in \Omega_r(A)$ and consider $Z=U\Sigma V^T$ compact singular value decomposition, where $U \in \R^{N\times r^\prime}$,  $V\in \R^{K\times r^\prime}$ with $U^T U = V^TV= I_{r'}$ where \(r'\geq r\) is the rank of \(Z\). Here $\Sigma\in \R^{r^\prime \times r^\prime}$ is the diagonal matrix of nonzero singular values.
Note that $Z (Z^TZ)^{\dagger/2} = UV^T$, therefore 
\[\rank (AU)=\rank (AUV^T) =\rank (AZ (Z^TZ)^{\dagger/2})\leq \rank AZ< \rank{Z}=\rank{UV^T}\] and thus
{\(U \in \Omega_r'(A)\)}. We define  the set
\begin{align*}
    \hat{\Omega}_{r'}(A) &= \Omega_{r'}(A) \cap \left\{U \in \R^{N\times r^\prime}:\; U^T U = I_{r'} \right\}\\
    &=
\left\{U \in \R^{N\times r^\prime}:\; U^T U = I_{r'} \text{ and } \rank{AU} < r' \,\right\}
\end{align*}
where \(I_{r'}\) is the $r^\prime\times r^\prime$ identity matrix. 

By the previous argument, we know that for any \(Z \in \Omega_r\) there exists a \(U \in \hat{\Omega}_{r'}(A)\) with \(r'\geq r\) such that
\[
\owl(Z) = \owl(UV^T)=\owl(U) = \norm{U}_{2,1}.
\]
Thus, we have
\[
\inf_{Z\in \Omega_r(A)} \owl(Z) = \inf_{r'\geq r; U \in \hat{\Omega}_{r'}(A)} \norm{U}_{2,1} = \min_{r'\geq r} \inf_{U \in \hat{\Omega}_{r'}(A)} \norm{U}_{2,1}.
\]
Moreover, the set \(\hat{\Omega}_{r'}(A) \) is closed and bounded, and thus there exists a \(U^* \in \hat{\Omega}_{r'}(A)\subset \Omega_r(A)\) for some \(r'\geq r\) such that 
\[
\inf_{Z\in \Omega_r(A)} \owl(Z) = \owl(U^*) = \norm{U^*}_{2,1}.
\]

Now, let $Z\in \Omega_r(A)$.  From Lemma~\ref{lem:Psibound}, $\owl(Z)\geq \rank(Z)\geq r$. Moreover, from the same lemma, $\owl(Z)=r$ if and only if  $\|Z\|_{2,0}=\rank(Z)=r$.  But  $\rank(AZ)<\rank(Z)$, therefore there exists a non-zero  linear combination of  the columns of $Z$ that is in the kernel of $A$. Such vector will have to be $r$-sparse as a linear combination of $r$-sparse vectors with the same sparsity pattern.  But there cannot be any $r$-sparse vector in the kernel of $A$ otherwise the conditions of Theorem~\ref{thm:rankrecequiv} will be violated (we can construct an $r$-sparse rank $r$ matrix that cannot be recovered from the measurements). Thus,  for any  $Z\in \Omega_r(A)$, $\owl(Z)> r$. Since the infimum is attainable, we have that $\iota_r(A)>r$.
\end{proof}

\begin{example}\label{ex:ssp_gets_better} To illustrate the advantage of larger \(r\) in a specific instance, consider $N=3, K=2$.
Let 
\[A = 
\begin{pmatrix}
c & 1 & 0 \\
c & 0 & 1 
\end{pmatrix} \]
for $c\geq 0$. Notice that, 
\[Z = 
\begin{pmatrix}
1 & 1\\
-c & -c \\
-c & -c
\end{pmatrix} \in \Omega_{1} (A)\]
and 
\[
  \owl(Z)=\frac{1+2c}{\sqrt{1+2c^2}}\leq \sqrt{2}
\]
when $c\leq 1/2$.
Here we used the fact that $\owl(Z)$ is the $\ell_{2,1}$ norm of the matrix with any orthonormal basis in the space of the columns of the matrix $Z$. So, in this case,  it is simply the $\ell_1/\ell_2$ functional on the first column.
Therefore, the matrix $A$ does not fullfill $(1,2)$-SSP when $c\leq 1/2$.
However, when $c>0$,  $2<\spark(A)=3$ thus the $(2,2)$-SSP is satisfied due to Lemma~\ref{lem:iota}.
When $c=0$, then it is not $(2,2)$-SSP which was expected. For
\[Z = 
\begin{pmatrix}
1 & 0\\
0 & 1 \\
0 & 0
\end{pmatrix}\in \Omega_{2} (A), \]
we have $\owl(Z)=2$.
\end{example}

\subsection{The noiseless constrained problem}

\begin{lemma}
\label{lemma:unbounded}
Let \(X \in \mathbb{S}(N,K,r)\) with \(\|X\|_{2,0}=s\) and \(Y = AX\), where $ s<\spark(A)$. Assume there exists an unbounded minimizing sequence $\{Z_k\}$ of~\eqref{prob:nonconvex}. Then 
\begin{equation*}
\iota_r(A) \leq  \min_{Z\in \R^{N\times K}, \; AZ=Y }  \owl(Z). 
\end{equation*}
\end{lemma}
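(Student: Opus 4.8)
The plan is to extract, from an unbounded minimizing sequence $\{Z_k\}$, a matrix lying in the rank-$r$ singularity set $\Omega_r(A)$ whose $\owl$-value is at most the optimal value of the constrained problem. First I would set $v = \min_{AZ=Y}\owl(Z)$ (this minimum is attained because $X$ itself is feasible and, by Theorem~\ref{thm:unique_is_truth}, it is the unique global minimizer, so in fact $v = \owl(X) = s$; but I will not need the precise value). Since $\{Z_k\}$ is a minimizing sequence, $\owl(Z_k)\to v$ and each $Z_k$ satisfies $AZ_k = Y$. Write the normalization $U_k = Z_k (Z_k^T Z_k)^{\dagger/2}$, so that $U_k$ has orthonormal columns spanning the column space of $Z_k$, and $\owl(Z_k) = \owl(U_k) = \|U_k\|_{2,1}$. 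The sequence $\{U_k\}$ lives in a bounded set (a union of Stiefel manifolds over the possible ranks $r_k = \rank Z_k \le \min\{N,K\}$), so after passing to a subsequence we may assume $r_k \equiv r^\star$ is constant and $U_k \to U^\star$ with $(U^\star)^T U^\star = I_{r^\star}$, hence $\rank U^\star = r^\star$.

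The key point is to show $U^\star \in \Omega_r(A)$, i.e. $r^\star \ge r$ and $\rank(A U^\star) < r^\star$. For the rank bound: since $AZ_k = Y$ and $\rank Y = \rank X = r$ (using $s<\spark(A)$ and Prop.~1 of \cite{petrosyan2019reconstruction}), we get $r^\star = \rank Z_k \ge \rank(AZ_k) = \rank Y = r$. For the singularity, this is where the unboundedness of $\{Z_k\}$ must be used: write $Z_k = U_k \Sigma_k V_k^T$ in compact SVD form; unboundedness of $\|Z_k\|$ forces $\|\Sigma_k\|_{\op}\to\infty$ (along a further subsequence), since $U_k, V_k$ are bounded. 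From $A U_k \Sigma_k V_k^T = Y$ we obtain $A U_k = Y V_k \Sigma_k^{-1}$ (here I use that $V_k$ has orthonormal columns, so $V_k^T$ has a right inverse $V_k$ on the relevant subspace; more carefully, $A U_k \Sigma_k = Y V_k$, so $A U_k = Y V_k \Sigma_k^{-1}$). As at least one diagonal entry of $\Sigma_k^{-1}$ tends to $0$, the corresponding column of $AU_k$ tends to $0$ in the limit, while the matching column of $U_k$ stays a unit vector; passing to the limit, $AU^\star$ has a nontrivial kernel direction inside its column space, more precisely $\rank(AU^\star) < r^\star$. (If more than one singular value blows up one gets an even larger rank drop, which only helps.) Hence $U^\star \in \Omega_r(A)$.

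Finally, by lower semicontinuity — indeed continuity of $\|\cdot\|_{2,1}$ — on the fixed-rank stratum, $\owl(U^\star) = \|U^\star\|_{2,1} = \lim_k \|U_k\|_{2,1} = \lim_k \owl(Z_k) = v$. Therefore $\iota_r(A) = \inf_{Z\in\Omega_r(A)}\owl(Z) \le \owl(U^\star) = v = \min_{AZ=Y}\owl(Z)$, which is the claimed inequality.

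The main obstacle I anticipate is the bookkeeping around the compact SVDs: the ranks $r_k$ are only known to lie in a finite range, so one must pass to a subsequence to stabilize them and also to stabilize which singular values blow up, and then argue that the limiting $U^\star$ genuinely has rank $r^\star$ (not smaller) so that a strict rank drop $\rank(AU^\star)<r^\star$ is meaningful. A secondary subtlety is justifying $AU_k = YV_k\Sigma_k^{-1}$ cleanly and controlling the limit of $V_k\Sigma_k^{-1}$: one should argue column-by-column, isolating the indices of the diverging singular values, rather than trying to invert $\Sigma_k$ wholesale. Everything else (boundedness of Stiefel matrices, continuity of $\|\cdot\|_{2,1}$, the rank inequality $\rank Z \ge \rank AZ$, and the identity $\owl(Z)=\|U\|_{2,1}$) is already available from the preceding material.
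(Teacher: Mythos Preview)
Your proposal is correct and follows essentially the same route as the paper: pass to a subsequence of constant rank, take the limit of the orthogonal part of $Z_k$, and use the diverging top singular value together with $AZ_k=Y$ to force a rank drop for $A$ on the limit, landing in $\Omega_r(A)$. Two small remarks: your aside invoking Theorem~\ref{thm:unique_is_truth} is not valid here since that theorem assumes $\rank X = s$, not $r$ (but, as you say, this is not used), and your limit $U^\star$ lives in $\R^{N\times r^\star}$ whereas $\Omega_r(A)\subset\R^{N\times K}$, which the paper handles by taking the limit of $U_kV_k^T\in\R^{N\times K}$ instead---an equivalent fix.
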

\begin{proof}
Let \(Z_k\) be minimizing sequence. From the proof of Proposition 1 in \cite{petrosyan2019reconstruction}, $ s<\spark(A)$ implies that $A$ preserves the rank of any $s$-row sparse matrix.  Thus $\rank(Z_k)\geq \rank(AZ_k)= \rank(Y)=\rank(X) =r$. 

Without loss of generality (by switching to a subsequence), we may assume that all the $Z_k$ have the same rank.
Assuming the compact SVD of $Z_k$ is
$ Z_k = U_k \Sigma_k V^T_k$, we have 
$
Z_k (Z_k^T Z_k)^{\dagger/2} = U_k  V^T_k.
$
Moreover, along a subsequence of \(k\) the largest singular value of \(\Sigma_k\) diverges and the corresponding left and right singular factors $U_k$ and $ V_k$ converge due to compactness by switching to another subsequence.  Let  $\hat{Z}$ be the limit of the resulting subsequence of $Z_k(Z_k^T Z_k)^{\dagger/2}$. $\hat{Z}$ has the same rank as the matrices in the subsequence (from the fact that $U_k$ and $V_k$ are orthogonal)  and
since \(\rank(Z_k) \geq r\), we deduce that
 \(\rank(\hat{Z}) \geq r\).


 Next, we  show that $\rank (A\hat Z)<\rank(\hat Z)$.
Let $\hat{v}^1$ be the right singular vector and $\hat{u}^1$ be the left singular vector corresponding to the largest singular value $\sigma^1$ of $\hat Z$.
Note that \[\hat{Z}(\hat{v}^1)^T = 
\lim_{k\to\infty} Z_k(v_k^1)^T (\sigma^1_k)^{-1}
\] where $\hat{v}_k^1$ is the right singular vector and $\hat{u}_k^1$ be the left singular vector corresponding to the largest singular value $\sigma_k^1$ of $Z_k$.
On the other hand 
\[
A \hat{Z} (\hat{v}^1)^T = \lim_{k\to \infty}  AZ_k(v^1_k)^T (\sigma^1_k)^{-1} = 0
\]
since \(AZ_k(v^1_k)^T\) is bounded. Note that
\(
A \hat{Z} = A(\hat{Z} - \hat{u}^1(\hat{v}^1)^T)\) and \(
\rank(\hat{Z} - \hat{u}^1(\hat{v}^1)^T) < \rank(\hat{Z})
\).  Therefore $\hat Z\in  \Omega_r(A)$.
 Using Proposition~\ref{cor:phi-lsc},
 \[\lim_{k\to\infty} \owl(Z_k)
\geq \owl(\hat{Z})\geq \iota_r(A).
\qedhere
\]
\end{proof}

\begin{proposition}\label{prop:noiseless_exist}
Let \(X \in \mathbb{S}(N,K,r)\) with \(\|X\|_{2,0}=s\), \(Y = AX\). Assume $s<\spark(A)$ and $A$ satisfies the $(r,s)$-SSP condition or, equivalently, 
\[ r\leq s < \min\{\iota^2_r(A)/r, \spark(A)\}.\]
Then any (globally) minimizing sequence of~\eqref{prob:nonconvex} is bounded and the set of optimal solutions of~\eqref{prob:nonconvex} is nonempty.
\end{proposition}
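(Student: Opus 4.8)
The plan is to assemble the proposition from the two lemmas already in hand, using the true solution $X$ as a competitor. First I would record that the two stated forms of the hypothesis are equivalent: the bound $r\le s$ holds automatically because a rank-$r$ matrix has at least $r$ nonzero rows, $s<\spark(A)$ is assumed outright, and since Lemma~\ref{lem:iota} guarantees that the infimum $\iota_r(A)$ is attained on $\Omega_r(A)$, the pointwise strict inequality $\owl(Z)>\sqrt{rs}$ for all $Z\in\Omega_r(A)$ of the $(r,s)$-SSP (Definition~\ref{def:SSP}) is the same as the strict bound $\iota_r(A)>\sqrt{rs}$, i.e.\ $s<\iota_r^2(A)/r$ (with both conditions vacuously true when $\Omega_r(A)=\emptyset$).

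The key quantitative ingredient is an upper bound on the optimal value coming from $X$. Taking a compact SVD $X=U\Sigma V^T$ with $U^TU=I_r$, the rows of $U=XV\Sigma^{-1}$ vanish wherever the rows of $X$ vanish, so $U$ is $s$-row sparse; Cauchy--Schwarz then gives $\owl(X)=\ellone(U)=\sum_{n\in\supp X}\norm{u_n}_2\le\sqrt{s}\,\norm{U}_\fro=\sqrt{rs}$. Since $X$ is feasible, the optimal value of~\eqref{prob:nonconvex} is finite and at most $\sqrt{rs}$.

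Boundedness of minimizing sequences then follows by contradiction: if~\eqref{prob:nonconvex} admitted an unbounded minimizing sequence, Lemma~\ref{lemma:unbounded} would force $\iota_r(A)\le\min_{AZ=Y}\owl(Z)\le\owl(X)\le\sqrt{rs}$, contradicting the $(r,s)$-SSP. With boundedness established, existence is the direct method: any minimizing sequence is bounded in the finite-dimensional space $\R^{N\times K}$, hence has a convergent subsequence $Z_k\to\bar Z$; continuity of $Z\mapsto AZ$ together with $AZ_k=Y$ keeps $\bar Z$ feasible, and lower semicontinuity of $\owl$ (Proposition~\ref{cor:phi-lsc}) gives $\owl(\bar Z)\le\liminf_k\owl(Z_k)=\inf\{\owl(Z):AZ=Y\}$, so $\bar Z$ is a global minimizer and the solution set is nonempty.

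I do not expect a serious obstacle, since the heavy lifting is carried out in Lemmas~\ref{lem:iota} and~\ref{lemma:unbounded}; the only care needed is the bookkeeping of strict versus non-strict inequalities — specifically that the attainability clause of Lemma~\ref{lem:iota} upgrades the pointwise SSP inequality to the strict bound $\iota_r(A)>\sqrt{rs}$, and that the feasible-point estimate $\owl(X)\le\sqrt{rs}$ sits on the correct side, so that the contradiction $\iota_r(A)\le\sqrt{rs}<\iota_r(A)$ actually closes.
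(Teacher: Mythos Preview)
Your proof is correct and takes essentially the same approach as the paper: bound the optimal value by $\owl(X)\le\sqrt{rs}$ (the paper phrases this as $\sqrt{s}\ge\owl(X)/\sqrt{r}$ via $\|X(X^TX)^{\dagger/2}\|_{2,0}^{1/2}\ge\|X(X^TX)^{\dagger/2}\|_{2,1}/\|X(X^TX)^{\dagger/2}\|_\fro$, which is the same Cauchy--Schwarz inequality you use and is recorded as Lemma~\ref{lem:Psibound}), invoke Lemma~\ref{lemma:unbounded} for the contradiction, and close with lower semicontinuity from Proposition~\ref{cor:phi-lsc}. Your explicit discussion of the equivalence of the two stated hypotheses and of feasibility of the limit point are details the paper leaves implicit.
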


\begin{proof}
Let 
\[\nu = \inf_{Z\in \R^{N\times K}, \; AZ=Y }  \owl(Z). 
\]
 Note that  $\owl(X)\ge \nu$.
 We have 
$
\| X (X^T X)^{\dagger/2} \|^2_{\fro} = r$ (the singular values of $X (X^T X)^{\dagger/2}$ are all equal to 1),
therefore, 
\begin{align*}
\|X\|_{2,0}^{1/2} & = \| X (X^T X)^{\dagger/2} \|_{2,0}^{1/2} \ge \frac{\| X (X^T X)^{\dagger/2} \|_{2,1}}{\| X (X^T X)^{\dagger/2} \|_{\fro}} \ = \  \frac{\owl(X)}{\sqrt{r}} \ge \frac{\nu}{\sqrt{r}}.
\end{align*}
{For every} $Z \in \Omega_r(A)$, we have 
\begin{align*}
  \owl(Z) \geq \iota_r(A)>   \sqrt{r s }  = \sqrt{r} \|X\|_{2,0}^{1/2} 
  \ge \nu .
\end{align*}
This, combined with Lemma~\ref{lemma:unbounded}, implies that there exists no unbounded minimizing sequence $\{Z_k\}$ of~\eqref{prob:nonconvex_reg}.
Without loss of generality, by passing to a subsequence if necessary, we can assume $Z_k\to \bar Z \in \mathbb{R}^{N\times K}$. From Corollary~\ref{cor:phi-lsc}, $\liminf\limits_{k\to\infty} \owl(Z_k)\geq  \owl(\bar Z)$ therefore, $\bar Z$ is the global minimum of~\eqref{prob:nonconvex}, completing the proof. 
\end{proof}

\begin{corollary}
Let  \(Y = AX\) where $X$ is an $s$-row sparse rank $s$ matrix with $s < \spark(A).$
Then any (globally) minimizing sequence of~\eqref{prob:nonconvex} is bounded and the set of optimal solutions of~\eqref{prob:nonconvex} is nonempty.\end{corollary}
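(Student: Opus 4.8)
The plan is to obtain this corollary as the special case $r = s$ of Proposition~\ref{prop:noiseless_exist}, so the only work is to check that the hypotheses of that proposition are met under the slightly weaker-looking assumptions stated here.

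First I would observe that an $s$-row sparse matrix satisfies $\ellzero(X) \le s$, while in general $\rank(X) \le \ellzero(X)$, since the rank of a matrix cannot exceed its number of nonzero rows. Because $\rank(X) = s$ by assumption, these two inequalities pinch to give $\ellzero(X) = s$, and hence $X \in \St(N,K,s)$ with $\|X\|_{2,0} = s$. This is exactly the form of the data required by Proposition~\ref{prop:noiseless_exist} with $r = s$ and $Y = AX$.

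Second, I would verify the quantitative condition $r \le s < \min\{\iota_r^2(A)/r,\ \spark(A)\}$ with $r = s$, equivalently the $(s,s)$-SSP: the bound $s < \spark(A)$ is assumed outright, and $s < \iota_s^2(A)/s$ is just the inequality $\iota_s(A) > s$ squared, which Lemma~\ref{lem:iota} supplies precisely because $s < \spark(A)$. (Phrased directly via the $(s,s)$-SSP: for every $Z \in \Omega_s(A)$ we have $\owl(Z) \ge \iota_s(A) > s = \sqrt{s\cdot s}$, so the property holds.) With both hypotheses of Proposition~\ref{prop:noiseless_exist} in place, its conclusion — every globally minimizing sequence of~\eqref{prob:nonconvex} is bounded and the set of optimal solutions is nonempty — transfers verbatim.

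There is no genuine obstacle here; the only point deserving care is the elementary observation that the maximum-rank hypothesis $\rank(X) = s$ together with $s$-row sparsity forces $\ellzero(X) = s$, which is exactly what places the problem in the regime where the $(r,s)$-SSP is automatic from $s < \spark(A)$ (through Lemma~\ref{lem:iota}) rather than an additional assumption that must be imposed.
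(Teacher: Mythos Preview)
Your proof is correct and follows essentially the same approach as the paper: set $r=s$, invoke Lemma~\ref{lem:iota} to obtain $\iota_s(A)>s$ from $s<\spark(A)$, and then apply Proposition~\ref{prop:noiseless_exist}. Your additional remark that the full-rank hypothesis forces $\|X\|_{2,0}=s$ exactly (rather than merely $\le s$) is a nice clarification that the paper leaves implicit.
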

\begin{proof}
We have $r=s$. From Lemma~\ref{lem:iota}, we have that $\iota_s(A)>s$, so the conditions of Theorem~\ref{prop:noiseless_exist} are satisfied. 
\end{proof}

\subsection{The noisy regularized problem}
Next, we turn to the question of showing that the minimizers of the problem~\eqref{prob:nonconvex_reg} indeed exist under certain conditions on $A$.

\begin{lemma}
\label{lemma:unconstrained_unbounded}
Let \(X \in \mathbb{S}(N,K,r)\) with \(\|X\|_{2,0}=s\) and  \(\norm{Y-AX} \leq \delta\), where $ s<\spark(A) $. Assume $\delta$ and $\alpha$ are
 sufficiently small and  assume there exists an unbounded minimizing sequence $\{Z_k\}$ of~\eqref{prob:nonconvex_reg}. Then  
\begin{equation*}
\iota_r(A)\leq  \min_{Z\in \R^{N\times K} } \left( \owl(Z)+\frac{1}{2\alpha} \|AZ-Y\|_\fro^2 \right). 
\end{equation*}
\end{lemma}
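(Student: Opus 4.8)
The plan is to follow the proof of Lemma~\ref{lemma:unbounded} almost line by line, with one genuinely new ingredient: in the noisy setting the lower rank bound $\rank(Z_k)\ge r$ along the minimizing sequence is no longer automatic and must be forced by the smallness of $\delta$ and $\alpha$. I would begin by controlling the optimal value. Testing the objective at $Z=X$ and using $\|AX-Y\|_\fro\le\delta$ together with $\owl(X)\le\sqrt{rs}$ (the renormalized matrix $X(X^TX)^{\dagger/2}$ has at most $s$ nonzero rows and squared Frobenius norm $r$, so Cauchy--Schwarz applies) gives
\[
\nu:=\min_{Z\in\R^{N\times K}}\Big(\owl(Z)+\tfrac{1}{2\alpha}\|AZ-Y\|_\fro^2\Big)\le \sqrt{rs}+\frac{\delta^2}{2\alpha}.
\]
Hence, for $k$ large, $\owl(Z_k)+\tfrac{1}{2\alpha}\|AZ_k-Y\|_\fro^2\le\nu+1$, and since $\owl\ge 0$ this yields $\|AZ_k-Y\|_\fro^2\le \delta^2+2\alpha(\sqrt{rs}+1)$; in particular $\{AZ_k\}$ is bounded, and $\|AZ_k-Y\|_\fro$ is small when $\delta$ and $\alpha$ are small (the $\delta^2/(2\alpha)$ contribution cancels).

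Next I would establish $\rank(Z_k)\ge r$ for $k$ large. Since $s<\spark(A)$, the proof of Proposition~1 in \cite{petrosyan2019reconstruction} gives $\rank(AX)=\rank(X)=r$, so $\sigma_r(AX)>0$. From the previous step, $\|AZ_k-AX\|_\fro\le\|AZ_k-Y\|_\fro+\delta\le\sqrt{\delta^2+2\alpha(\sqrt{rs}+1)}+\delta$, which is strictly below $\sigma_r(AX)$ once $\delta$ and $\alpha$ are below a threshold determined by the problem data; Weyl's inequality then gives $\sigma_r(AZ_k)>0$, hence $\rank(Z_k)\ge\rank(AZ_k)\ge r$. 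Passing to a subsequence I may assume all $Z_k$ share a rank $r'\ge r$, with compact SVDs $Z_k=U_k\Sigma_k V_k^T$, so that $Z_k(Z_k^TZ_k)^{\dagger/2}=U_kV_k^T$. Unboundedness of $\{Z_k\}$ forces the top singular value $\sigma_k^1\to\infty$ along a further subsequence, while $U_k,V_k$ range over compact sets; extracting once more, $U_k\to\hat U$, $V_k\to\hat V$ with orthonormal columns, and $\hat Z:=\hat U\hat V^T=\lim_k Z_k(Z_k^TZ_k)^{\dagger/2}$ has $\rank(\hat Z)=r'\ge r$.

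It then remains to show $\hat Z\in\Omega_{r'}(A)\subseteq\Omega_r(A)$ and conclude. For the top right singular vector $v_k^1$ of $Z_k$ one has $A\hat Z(\hat v^1)^T=\lim_k AZ_k(v_k^1)^T/\sigma_k^1=0$, because $AZ_k(v_k^1)^T$ is bounded (first step) while $\sigma_k^1\to\infty$; exactly as in Lemma~\ref{lemma:unbounded}, with $\hat u^1$ the corresponding left singular vector this gives $A\hat Z=A(\hat Z-\hat u^1(\hat v^1)^T)$ and $\rank(\hat Z-\hat u^1(\hat v^1)^T)<\rank(\hat Z)$, hence $\rank(A\hat Z)<\rank(\hat Z)$, i.e.\ $\hat Z\in\Omega_r(A)$. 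Finally, since $\owl(Z_k)=\owl(Z_k(Z_k^TZ_k)^{\dagger/2})$ and $Z_k(Z_k^TZ_k)^{\dagger/2}\to\hat Z$, the lower semicontinuity of Corollary~\ref{cor:phi-lsc} gives $\liminf_k\owl(Z_k)\ge\owl(\hat Z)\ge\iota_r(A)$ along the chosen subsequence; combining this with $\owl(Z_k)+\tfrac{1}{2\alpha}\|AZ_k-Y\|_\fro^2\to\nu$ and the nonnegativity of the fidelity term, superadditivity of $\liminf$ yields $\nu\ge\liminf_k\owl(Z_k)\ge\iota_r(A)$, which is the assertion. The main obstacle is the second step — making ``$\delta$ and $\alpha$ sufficiently small'' precise so that $\rank(Z_k)$ cannot drop below $r$ — since the rest is a routine transcription of Lemma~\ref{lemma:unbounded}.
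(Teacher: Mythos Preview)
Your proof is correct and follows essentially the same approach as the paper's. The only noteworthy difference is in how you establish $\rank(Z_k)\ge r$: the paper argues informally in two steps (first $\rank(Y)\ge r$ from $Y$ close to $AX$, then $\rank(AZ_k)\ge r$ from $AZ_k$ close to $Y$), whereas you combine these via the triangle inequality and invoke Weyl's inequality explicitly on $\sigma_r(AZ_k)$ versus $\sigma_r(AX)$---a cleaner and more rigorous version of the same perturbation idea.
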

\begin{proof}
Let \(Z_k\) be minimizing sequence for the  right side. For \(k\) large enough,
\[
\owl(Z_k) + \frac{1}{2\alpha} \norm{AZ_k - Y}^2_{\fro}
\leq 2  (\owl(X) + \frac{1}{2\alpha} \norm{AX-Y}^2_{\fro})  
\]
thus 
\[
\frac{1}{2}\norm{AZ_k - Y}^2_{\fro}
\leq 2  (\alpha\owl(X) + \frac{1}{2} \norm{Y-AX}^2_{\fro}) - \alpha\owl(Z_k)
\leq 2 \alpha \sqrt{r}\sqrt{s} +
 \delta^2.
\]
This implies that \(\rank(Z_k) \geq r\) for \(\delta\), \(\alpha\) small enough. Indeed,  $s<\spark(A)$ implies that $A$ preserves the rank of any $r$-row-sparse matrix (see Proposition 1 in \cite{petrosyan2019reconstruction}). If $\delta$ is small enough then $\rank(Y)\geq \rank(AX)=r$. And when both $\alpha $ and $\delta$ are small enough, $\rank(AZ_k)\geq \rank(Y)\geq r$. On the other hand $\rank(Z_k)\geq \rank(AZ_k)\geq r$.

Like in the proof of Lemma~\ref{lemma:unbounded},
without loss of generality, all the $Z_k$ have the same rank and  
$
Z_k (Z_k^T Z_k)^{\dagger/2}
$
converge to a  $\hat{Z}\in \Omega_r$. 
Therefore,
 \[\lim_{k\to\infty} \left(\owl(Z_k) + \frac{1}{2\alpha} \|AZ_k-Y\|_\fro^2 \right) 
\ge \liminf_{k\to\infty} \owl(Z_k) 
\geq \owl(\hat{Z})\geq \iota_r(A).
\qedhere
\]
\end{proof}

\begin{proposition}
\label{prop:unconstrained_bounded}
Let \(X \in \mathbb{S}(N,K,r)\) with \(\|X\|_{2,0}=s\) and \(\norm{Y-AX} \leq \delta\). Assume, $ s<\spark(A)$ and $A$ satisfies the $(r,s)$-SSP or, equivalently, 
\[ r\leq s < \min\{\iota^2_r(A)/r, \spark(A)\}.\]
Then, for sufficiently small $\delta^2/\alpha$ and $\alpha$, any (globally) minimizing sequence of~\eqref{prob:nonconvex_reg} is bounded and the set of optimal solutions of~\eqref{prob:nonconvex_reg} is nonempty.

\begin{proof}
 Since $\owl(X) + \dfrac{1}{2\alpha} \|AX-Y\|_\fro^2 \ge \nu$, we have that $\owl(X)\ge \nu-\dfrac{\delta^2}{2\alpha}$.
 We have 
$
\| X (X^T X)^{\dagger/2} \|^2_{\fro} = r$,
therefore, 
\begin{align*}
\|X\|_{2,0}^{1/2} & = \| X (X^T X)^{\dagger/2} \|_{2,0}^{1/2} \ge \frac{\| X (X^T X)^{\dagger/2} \|_{2,1}}{\| X (X^T X)^{\dagger/2} \|_{\fro}} \ = \  \frac{\owl(X)}{\sqrt{r}} \ge \frac{1}{\sqrt{r}}\left(\nu -\frac{\delta^2}{2\alpha}\right).
\end{align*}
Thus, we have
\begin{align*}
  \iota_r(A)>   \sqrt{ \iota_r(A) }\sqrt{r}  = \frac{\sqrt{ \iota_r(A) }}{s}\sqrt{r} \|X\|_{2,0}^{1/2} 
  \ge \frac{\sqrt{ \iota_r(A) }}{s}\left( \nu -\frac{\delta^2}{2\alpha}\right)>\nu
\end{align*}
when  $\delta^2/\alpha$ is sufficiently small. 
This, combined with Lemma~\ref{lemma:unbounded}, implies that there exists no unbounded minimizing sequence $\{Z_k\}$ of~\eqref{prob:nonconvex_reg}.
Without loss of generality, by passing to a subsequence if necessary, we can assume $Z_k\to \bar Z \in \mathbb{R}^{N\times K}$. Then $\|AZ_k-Y\|_\fro^2 \to \|A\bar Z-Y\|_\fro^2 $ and, from Corollary~\ref{cor:phi-lsc}, $\liminf\limits_{k\to\infty} \owl(Z_k)\geq  \owl(\bar Z)$. Therefore, $\bar Z$ is the global minimum of~\eqref{prob:nonconvex_reg}, completing the proof. 
\end{proof}

\begin{corollary}
Let  \(Y = AX\) where $X$ is an $s$-row sparse rank $s$ matrix with $s < \spark(A).$
Then, for sufficiently small $\delta^2/\alpha$ and $\alpha$, any (globally) minimizing sequence of~\eqref{prob:nonconvex_reg} is bounded and the set of optimal solutions of~\eqref{prob:nonconvex_reg} is nonempty.
\end{corollary}

\end{proposition}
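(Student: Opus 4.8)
The plan is to obtain this statement as the special case $r = s$ of Proposition~\ref{prop:unconstrained_bounded}, exactly as the noiseless corollary following Proposition~\ref{prop:noiseless_exist} is obtained from that proposition. Since $X$ is $s$-row sparse of rank $s$, we have $X \in \St(N,K,s)$ and $\norm{X}_{2,0} = s$; and because $Y = AX$ we may take $\delta = 0$, so that $\norm{Y - AX}_\fro \le \delta$ holds trivially and the requirement ``$\delta^2/\alpha$ sufficiently small'' becomes vacuous, leaving only the smallness of $\alpha$ itself to be invoked.

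It then remains to verify the hypotheses of Proposition~\ref{prop:unconstrained_bounded} with $r = s$, namely $s < \spark(A)$ together with the $(s,s)$-SSP, i.e.\ the equivalent condition $r \le s < \min\{\iota_r^2(A)/r,\ \spark(A)\}$. The bound $s < \spark(A)$ is assumed, and $r \le s$ is the trivial equality $s \le s$. For the remaining inequality I would appeal to Lemma~\ref{lem:iota}: since $s < \spark(A)$, that lemma applied with $r = s$ gives $\iota_s(A) > s$, hence $\iota_s^2(A)/s > s$. Combining, $s < \min\{\iota_s^2(A)/s,\ \spark(A)\}$, so all hypotheses of Proposition~\ref{prop:unconstrained_bounded} are satisfied.

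Applying Proposition~\ref{prop:unconstrained_bounded} then yields that, for $\alpha$ sufficiently small, every globally minimizing sequence of~\eqref{prob:nonconvex_reg} is bounded; extracting a convergent subsequence $Z_k \to \bar Z$ and using the lower semicontinuity of $\owl$ from Corollary~\ref{cor:phi-lsc} together with the continuity of $Z \mapsto \norm{AZ - Y}_\fro^2$ shows the limit $\bar Z$ is a global minimizer, so the solution set is nonempty. I do not expect any genuine obstacle here beyond bookkeeping: the substantive point — that the $(r,s)$-SSP holds automatically once $r = s < \spark(A)$ — is already contained in Lemma~\ref{lem:iota}, and the existence machinery (the unbounded-minimizing-sequence dichotomy of Lemma~\ref{lemma:unconstrained_unbounded} and the compactness/lower-semicontinuity argument) is already in place in Proposition~\ref{prop:unconstrained_bounded}. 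The only item worth stating with care is that in this noiseless-data regime one has $\delta = 0$ outright, so the conclusion holds for all sufficiently small $\alpha > 0$ with no further condition.
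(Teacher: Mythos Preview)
Your approach is correct and matches the paper's intended argument: the noisy corollary is the direct analogue of the noiseless one, obtained by setting $r=s$, invoking Lemma~\ref{lem:iota} (which gives $\iota_s(A)>s$ from $s<\spark(A)$, hence the $(s,s)$-SSP), and applying Proposition~\ref{prop:unconstrained_bounded}. Your observation that $Y=AX$ forces $\delta=0$, making the $\delta^2/\alpha$ smallness condition vacuous, is correct; the final paragraph re-deriving existence via subsequences and lower semicontinuity is redundant, since that conclusion is already part of Proposition~\ref{prop:unconstrained_bounded} itself.
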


\section{The approximate problem}\label{sec:approx}

The functional \(\owl\) is highly discontinuous (recall that this is a necessary condition for rank-awareness), especially in the case of large \(K\), and not directly amenable to computations, unless it is restricted to the submanifold \(\St(N,K,r) \subset \R^{N\times K}\) of matrices of given rank \(r\).
In the noise free case, we can usually deduce the appropriate rank from the data \(Y\), since \(\rank(X) = \rank(Y)\) is a necessary condition for recovery.
Using a decomposition \(Y = Y' Q\) with \(QQ^T = I \in \R^{r\times r}\), we can then solve the problem with data \(Y' = YQ^T \in \R^{N\times r}\) for the solution \(Z' \in \St(N,r) \subset \R^{N\times r}\) and obtain a reconstruction of \(X\) as \(Z'Q\); see \cite{petrosyan2019reconstruction}.

We do not know what the rank of $X$ should be from the data \(Y = AX + W\) in the noisy case, since usually we would expect the noise \(W\) to have full rank. If we preprocess the data $Y$ via its SVD, we can still ensure that the data has full rank. However, in the presence of noise, a number of small singular values may potentially be associated to just noise and an arbitrary cutoff for the small singular values can be difficult to determine in practice ahead of solving the problem.
Additionally, even when we happen to guess the correct rank, an iterative solution algorithm can run into a singularity during one of the iterations. This challenge can be addressed by carefully selecting the step-size; however, this may still yield an ill-conditioned matrix, potentially causing the updates to stagnate. A proper relaxation of the problem may take care of this difficulty.

For this and other reasons to be discussed later, we consider a generalization of the \(\owl\)-functional with a parameter \(\gamma \in [0, 1]\):
\[
  \Psi_\gamma(Z)=\norm{Z\left(\gamma I + (1-\gamma) Z^T Z\right)^{\dagger/2}}_{2,1},
\]
and a generalized version of problem~\eqref{prob:nonconvex_reg}: 
\begin{equation}
\label{prob:nonconvex_reg_gamma}
\min_{Z\in \R^{N\times K}} \left[\Psi_\gamma(Z) + \frac{1}{2\alpha}\norm{A Z - Y}^2_\fro \right].
\end{equation}
The numbers $\gamma\in [0,1]$ and $\alpha>0$ are the hyper-parameters of this problem.
In particular, $\gamma$ controls the convexity of the problem, where $\gamma=1$ reduces it to the convex $\ell_{2,1}$ norm regularized problem and $\gamma=0$ corresponds directly to the $\owl$ regularization.
{For \(\gamma \in (0,1)\), we obtain a new ``relaxed'' optimization problem.} 

As a first verification of the appropriateness of this relaxed problem we will show that the (global) solutions of this problem converges to the (global) solution of the problem in~\eqref{prob:nonconvex_reg} as $\gamma\to 0$ (Proposition~\ref{prop:limit_minimizers}).
Let 
\[
  J_\gamma(Z) = \Psi_\gamma(Z)+\frac{1}{2\alpha} \|AZ-Y\|_\fro^2.
\]
be the associated objective functional. As a corollary of Proposition~\ref{convergence:Psi} on the \(\Gamma\) convergence of \(J_\gamma\) we obtain the following:
\begin{proposition}\label{prop:limit_minimizers}
Any cluster point of minimizers \(\bar{Z}_\gamma\) of \(J_\gamma\) with $0 < \gamma\to 0$ is a  minimizer of~\eqref{prob:nonconvex_reg}. 
\end{proposition}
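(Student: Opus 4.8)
The plan is to obtain this as the standard ``fundamental theorem of $\Gamma$-convergence'' consequence of Proposition~\ref{convergence:Psi}: namely, once we know that $J_\gamma$ $\Gamma$-converges to $J_0$ as $\gamma\to 0$, where $J_0(Z) = \owl(Z) + \frac{1}{2\alpha}\|AZ-Y\|_\fro^2$ is precisely the objective of~\eqref{prob:nonconvex_reg}, cluster points of minimizers are automatically minimizers of the limit. No equi-coercivity argument is required here, since the statement is conditional on a cluster point already existing; we do not need to produce one.

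The steps I would carry out are as follows. First, fix a cluster point $\bar Z$ of the family $\{\bar Z_\gamma\}_{\gamma>0}$ and extract a sequence $\gamma_k\downarrow 0$ with $\bar Z_{\gamma_k}\to\bar Z$ in $\R^{N\times K}$. The $\liminf$-inequality part of $\Gamma$-convergence then gives $J_0(\bar Z)\le\liminf_{k\to\infty}J_{\gamma_k}(\bar Z_{\gamma_k})$. Second, pick an arbitrary competitor $Z\in\R^{N\times K}$ and invoke the recovery-sequence (i.e.\ $\limsup$) part of $\Gamma$-convergence to get $Z_k\to Z$ with $\limsup_{k\to\infty}J_{\gamma_k}(Z_k)\le J_0(Z)$. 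Third, use that each $\bar Z_{\gamma_k}$ minimizes $J_{\gamma_k}$, so $J_{\gamma_k}(\bar Z_{\gamma_k})\le J_{\gamma_k}(Z_k)$ for all $k$. Chaining the three estimates yields
\[
  J_0(\bar Z)\ \le\ \liminf_{k\to\infty}J_{\gamma_k}(\bar Z_{\gamma_k})\ \le\ \limsup_{k\to\infty}J_{\gamma_k}(Z_k)\ \le\ J_0(Z).
\]
Since $Z$ was arbitrary, $\bar Z$ is a global minimizer of $J_0$, i.e.\ of~\eqref{prob:nonconvex_reg}. (Choosing $Z=\bar Z$ additionally shows $J_{\gamma_k}(\bar Z_{\gamma_k})\to J_0(\bar Z)$, but this is not needed for the stated conclusion.) See~\cite{dal2012introduction} for the general principle.

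The real content is upstream, in Proposition~\ref{convergence:Psi}: within the present proof the only things to check carefully are that $J_0$ is genuinely the $\Gamma$-limit and that the recovery-sequence property applies to \emph{every} competitor, including rank-deficient ones. Of the two halves of $\Gamma$-convergence, the $\limsup$ half is the benign one --- one expects the constant sequence $Z_k\equiv Z$ to already work, because diagonalizing $Z=U\Sigma V^T$ shows $Z(\gamma I+(1-\gamma)Z^TZ)^{\dagger/2}=U\,\Sigma(\gamma I+(1-\gamma)\Sigma^2)^{-1/2}\,V^T\to UV^T$ as $\gamma\to 0$, hence $\Psi_\gamma(Z)\to\owl(Z)$ pointwise and $J_\gamma(Z)\to J_0(Z)$. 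The main obstacle is therefore the $\liminf$-inequality, which must handle a drop in rank between the approximating matrices $Z_k$ and their limit, where $\owl$ is discontinuous; this is presumably where the lower semicontinuity of $\owl$ from Corollary~\ref{cor:phi-lsc}, together with the specific structure of $\Psi_\gamma$ as $\gamma\to 0$, does the work. Granting Proposition~\ref{convergence:Psi}, the proof of the present proposition is exactly the short chaining argument above.
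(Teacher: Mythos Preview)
Your proposal is correct and matches the paper's approach: the paper does not give an explicit proof but simply states the proposition as a corollary of Proposition~\ref{convergence:Psi} (the $\Gamma$-convergence of $\Psi_\gamma$), and you have spelled out exactly the standard fundamental-theorem-of-$\Gamma$-convergence argument that makes this corollary precise. The only small point worth making explicit is that Proposition~\ref{convergence:Psi} is stated for $\Psi_\gamma$, not $J_\gamma$, so one should observe that adding the fixed continuous term $\frac{1}{2\alpha}\|AZ-Y\|_\fro^2$ preserves $\Gamma$-convergence (a standard fact, \cite[Proposition~6.21]{dal2012introduction}), which justifies your starting assumption that $J_\gamma$ $\Gamma$-converges to $J_0$.
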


An additional advantage of the relaxed problem, which will turn out to be useful for numerical optimization, is that its minimizers can always be characterized by first-order conditions \(\bar{Z}\) with the help of the Clarke subdifferential.
It is defined for any locally Lipschitz function; see \cite[Chapter 10]{clarke2013functional}. According to Proposition~\ref{prop:relax_basic_props} we have:
\begin{proposition}
$J_\gamma(Z)$ is locally Lipschitz continuous for any \(\gamma \in (0,1]\).
For \(\gamma = 0\) it is only locally Lipschitz continuous on neighborhoods of points \(Z\) where \(Z^TZ\) is invertible (\(\rank(Z) = K\)).
\end{proposition}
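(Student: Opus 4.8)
The plan is to isolate the smooth part of $J_\gamma$ from the regularizer. The map $Z\mapsto \tfrac{1}{2\alpha}\norm{AZ-Y}_\fro^2$ is a quadratic polynomial, hence $C^\infty$ and in particular locally Lipschitz on all of $\R^{N\times K}$, so everything reduces to $\Psi_\gamma$. Throughout I would exploit that $\ellone=\norm{\cdot}_{2,1}$, being a norm on a finite-dimensional space, is globally Lipschitz with respect to $\norm{\cdot}_\fro$; thus $\Psi_\gamma$ inherits (local) Lipschitz continuity — in fact $C^\infty$-smoothness — from any set on which the inner map $Z\mapsto Z\left(\gamma I+(1-\gamma)Z^TZ\right)^{\dagger/2}$ is smooth, and the whole statement amounts to determining where that is the case.

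For $\gamma\in(0,1]$, write $M_\gamma(Z)=\gamma I+(1-\gamma)Z^TZ$. Since $Z^TZ\succeq 0$ and $\gamma>0$, we have $M_\gamma(Z)\succeq\gamma I\succ 0$: the matrix $M_\gamma(Z)$ is symmetric positive definite with spectrum bounded below by $\gamma$ uniformly in $Z$, so $M_\gamma(Z)^{\dagger/2}=M_\gamma(Z)^{-1/2}$. Now $Z\mapsto M_\gamma(Z)$ is polynomial, and $W\mapsto W^{-1/2}$ is real-analytic (in particular $C^\infty$) on the open cone of symmetric positive definite matrices — e.g.\ via the Cauchy-integral representation of the principal square root, or \cite{higham2008functions}. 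Hence $Z\mapsto Z\,M_\gamma(Z)^{-1/2}$ is $C^\infty$ on $\R^{N\times K}$, and composing with $\ellone$ shows $\Psi_\gamma$, and therefore $J_\gamma$, is locally Lipschitz everywhere (for $\gamma=1$ this is immediate since $\Psi_1=\norm{\cdot}_{2,1}$).

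For $\gamma=0$ I would prove both inclusions. If $\rank(Z_0)=K$, i.e.\ $Z_0^TZ_0$ is invertible, then full column rank is an open condition, so there is a neighborhood $\mathcal U\ni Z_0$ and a constant $c>0$ with $Z^TZ\succeq cI$ for all $Z\in\mathcal U$; on $\mathcal U$ one has $(Z^TZ)^{\dagger/2}=(Z^TZ)^{-1/2}$, and the argument of the previous paragraph (with $M_0(Z)=Z^TZ$ restricted to $\mathcal U$) shows $\owl$, hence $J_0$, is $C^\infty$ on $\mathcal U$. Conversely, suppose $\rank(Z_0)=r<K$; I would show $\owl$ is not even continuous at $Z_0$, which precludes local Lipschitz continuity on every neighborhood. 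Take a compact SVD $Z_0=U_0\Sigma_0V_0^T$ with $U_0\in\R^{N\times r}$, $V_0\in\R^{K\times r}$; using $N\ge K$ (so $r<K\le N$ forces $r<N$), pick unit vectors $u\in\R^N$, $v\in\R^K$ with $U_0^Tu=0$ and $V_0^Tv=0$, and set $Z_t=Z_0+t\,uv^T$ for $t>0$. Then $Z_t=[\,U_0\;u\,]\operatorname{diag}(\Sigma_0,t)[\,V_0\;v\,]^T$ is a compact SVD of $Z_t$, so by the basis-independence of $\owl$ (as used in Lemma~\ref{lem:iota}),
\[
  \owl(Z_t)=\ellone\big([\,U_0\;u\,]\big)=\sum_{n=1}^N\sqrt{\norm{u_{0,n}}_2^2+u_n^2},
\]
a quantity independent of $t$ that strictly exceeds $\sum_{n=1}^N\norm{u_{0,n}}_2=\ellone(U_0)=\owl(Z_0)$ because $u\ne 0$. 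Letting $t\to 0^+$ gives $Z_t\to Z_0$ while $\owl(Z_t)$ stays bounded away from $\owl(Z_0)$; together with the lower semicontinuity of $\owl$ from Corollary~\ref{cor:phi-lsc} this establishes the discontinuity, and hence the ``only'' part of the claim.

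I expect this last case to be the only step requiring genuine care: one must produce a rank-increasing perturbation whose $\owl$-value is both easy to evaluate and manifestly bounded away from $\owl(Z_0)$, which hinges on the elementary but key observation that augmenting an orthonormal frame by a genuinely new direction strictly raises its $\ellone$-value. The remaining two cases are routine once one invokes the standard smoothness of the matrix inverse square root on the positive-definite cone together with the openness of the full-column-rank condition.
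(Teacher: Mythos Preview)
Your proof is correct and takes a genuinely different route from the paper. The paper deduces the Lipschitz claims from Proposition~\ref{prop:relax_basic_props}, which is proved via the singular-value functional calculus: writing $\Psi_\gamma(Z)=\norm{w_\gamma(Z)}_{2,1}$ with $w_\gamma(\sigma)=\sigma/\sqrt{\gamma+(1-\gamma)\sigma^2}$ and invoking \cite[Theorem~1.1]{AnderssonCarlssonPerfekt2015} to transfer the scalar Lipschitz constant $\gamma^{-1/2}$ of $w_\gamma$ to a matrix Lipschitz estimate, yielding the explicit \emph{global} bound $\abs{\Psi_\gamma(Z)-\Psi_\gamma(\widehat Z)}\le\sqrt{N/\gamma}\,\norm{Z-\widehat Z}_\fro$; the $\gamma=0$ full-rank case is handled by replacing $w_0$ with a Lipschitz truncation $\tilde w_0$. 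Your argument instead uses only the standard fact that $W\mapsto W^{-1/2}$ is real-analytic on the positive-definite cone, composed with the polynomial $Z\mapsto M_\gamma(Z)$ and the Lipschitz outer norm $\ellone$. This is more elementary and self-contained (no external singular-value-calculus theorem), at the cost of not producing the explicit global constant the paper obtains and later exploits.

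You also do something the paper does not: you actually prove the ``only'' direction for $\gamma=0$ by exhibiting, at any rank-deficient $Z_0$, the rank-one SVD augmentation $Z_t=Z_0+t\,uv^T$ along which $\owl$ is constant and strictly larger than $\owl(Z_0)$. The paper merely asserts this necessity and refers to Proposition~\ref{prop:relax_basic_props}, which contains no such converse. Your construction relies on the hypothesis $N\ge K$ to guarantee a unit $u$ orthogonal to the columns of $U_0$; this is the natural regime for the paper (and is implicit whenever one speaks of $Z^TZ$ being invertible), but you may want to flag it explicitly. One small phrasing issue: your sentence ``$\Psi_\gamma$ inherits (local) Lipschitz continuity --- in fact $C^\infty$-smoothness'' reads as if $\Psi_\gamma$ itself were smooth, which it is not because of the outer $\ellone$; the $C^\infty$ claim applies to the inner map only.
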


For the following, we introduce the  matrix \(W = (\gamma I + (1-\gamma)Z^TZ)^{\dagger}\)
and note that $\Psi_\gamma(Z) = \|Z\|_{W,1}$.
When either \(\gamma > 0\) or  \(Z^TZ\) is invertible, the matrix \(M = \gamma I + (1-\gamma) Z^T Z\) is symmetric positive definite and thus \(W = M^{-1} = \left(\gamma I + (1-\gamma) Z^T Z\right)^{-1}\).

The version of Fermat's rule for Clarke subdifferential states that \cite[Exercise 10.7]{clarke2013functional}, at a local minimimum  $\bar Z$ of $J_\gamma$,   it holds
\begin{equation}
\label{eq:fermat}
0 \in \partial_C J_\gamma(\bar{Z}) = \partial_C \Psi_\gamma(\bar{Z}) + \frac{1}{\alpha} A^T(A\bar{Z} - Y).
\end{equation}
The additive property does not always hold for the Clarke subdifferential, only an inclusion, but it does in this case due to \cite[Exercise 10.16]{clarke2013functional}.
To characterize the subdifferential, we provide a convex approximation to \(\Psi_\gamma\) at a given reference point.
This is also known as a model function in nonconvex, nonsmooth optimization; cf., e.g., \cite{NollProtRondepierre:2008,Noll:2010}.
\begin{definition}
  \label{def:model_Psi}
  Let \(\widehat{Z}\) be given with corresponding invertible \(\widehat{M} = \gamma I + (1-\gamma) \widehat{Z}^T \widehat{Z}\) and \(\widehat{W} = \widehat{M}^{-1}\).
  The convex model function is defined as
  \begin{equation}
    \label{eq:model_Psi}
    m^{\Psi}_{\widehat{Z}}(Z) :=
    \norm{Z}_{\widehat{W},1}
    + \langle \widehat{Z}\widehat{\Lambda}, Z - \widehat{Z}\rangle
    \quad\text{where }
    \widehat{\Lambda} 
    = -\sum_{n=1}^N\frac{1-\gamma}{\norm{\widehat{z}_n}_{\widehat{W}}} \widehat{W} \widehat{z}_n \widehat{z}_n^T \widehat{W}.
  \end{equation}
\end{definition}
With a characterization of the Clarke subdifferential of $\Psi_\gamma(Z)$
the above optimality condition can be equivalently stated as follows:
\begin{theorem}
\label{thm:opt_cond_model}
Let \(\bar{Z}\) be a local minimum of \(J_\gamma\), where either \(\gamma > 0\) or \(\Bar{Z}^T\Bar{Z}\) invertible.
Then, in terms of the Clarke subdifferential, the following first order necessary conditions hold:
\begin{align*}
     \bar{z}_n\left(\frac{1}{\norm{\bar{z}_n}_{\bar{W}}} \bar{W} - \sum_{i=1}^N \frac{1-\gamma}{\norm{\bar{z}_i}_{\bar{W}}} \bar{W} \bar{z}_i \bar{z}^\top_i \bar{W} \right) + \frac{1}{\alpha}(A^T(A\bar{Z}-Y))_n &= 0 &&\text{for } \bar{z}_n \neq 0,\\
    \norm{(A^T(A\bar{Z}-Y))_n\bar{W}^{-1}}_{\bar{W}} = \norm{(A^T(A\bar{Z}-Y))_n}_{\bar{W}^{-1}} &\leq \alpha
        &&\text{for } \bar{z}_n= 0,
\end{align*}
where $\bar z_n$ is the $n$-th row of $\bar Z$ and \(\bar W = (\gamma I+(1-\gamma)\bar Z^T\bar Z)^{-1}\).
\end{theorem}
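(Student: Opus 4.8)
The plan is to apply Fermat's rule~\eqref{eq:fermat} and then make the inclusion $0\in\partial_C\Psi_\gamma(\bar Z)+\tfrac1\alpha A^T(A\bar Z-Y)$ explicit, row by row. The heart of the argument is the identity
\[
  \partial_C\Psi_\gamma(\bar Z)=\partial\bigl(\norm{\cdot}_{\bar W,1}\bigr)(\bar Z)+\bar Z\bar\Lambda,
  \qquad \bar W=(\gamma I+(1-\gamma)\bar Z^T\bar Z)^{-1},
\]
that is, that the Clarke subdifferential of $\Psi_\gamma$ at $\bar Z$ coincides with the (convex) subdifferential at $\bar Z$ of the model function $m^{\Psi}_{\bar Z}$ from Definition~\ref{def:model_Psi}. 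Note that in every case covered by the theorem — $\gamma>0$, or $\gamma=0$ with $\bar Z^T\bar Z$ invertible — the matrix $\bar M=\gamma I+(1-\gamma)\bar Z^T\bar Z$ is symmetric positive definite, so $\Psi_\gamma$ is locally Lipschitz near $\bar Z$ by Proposition~\ref{prop:relax_basic_props} and $\partial_C\Psi_\gamma(\bar Z)$ is well defined.

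To prove the identity I would split $\Psi_\gamma=\Phi+R$ with the convex, fixed-weight part $\Phi(Z)=\norm{Z}_{\bar W,1}=\sum_n\norm{z_n}_{\bar W}$ and the remainder $R(Z)=\sum_n\bigl(\norm{z_n}_{W(Z)}-\norm{z_n}_{\bar W}\bigr)$, where $W(Z)=(\gamma I+(1-\gamma)Z^TZ)^{-1}$ is smooth in a neighborhood of $\bar Z$ because $\bar M$ is invertible. The claim is that $R$ is \emph{strictly} differentiable at $\bar Z$ with $\nabla R(\bar Z)=\bar Z\bar\Lambda$. For a row $n$ with $\bar z_n\neq 0$, $R_n$ is $C^\infty$ near $\bar Z$; differentiating with $dW=-(1-\gamma)W(dZ^TZ+Z^TdZ)W$ shows that the first-order term from perturbing $z_n$ at frozen weight cancels against $-\nabla\norm{z_n}_{\bar W}$, while the term from the weight produces precisely the $n$-th summand $-\tfrac{1-\gamma}{\norm{\bar z_n}_{\bar W}}\bar W\bar z_n\bar z_n^T\bar W$ of $\bar\Lambda$ (after using cyclicity of the trace to rewrite it as a component of $\langle\,\cdot\,,\bar Z\bar\Lambda\rangle$). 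For a row $n$ with $\bar z_n=0$, rationalizing the difference of square roots yields $\abs{R_n(Z)}\le C\,\norm{z_n}_2\,\norm{W(Z)-\bar W}_{\op}=O(\norm{Z-\bar Z}^2)$ and, on the dense set where $R_n$ is differentiable, $\norm{\nabla R_n(Z)}=O(\norm{Z-\bar Z})$, so $\partial_C R_n(\bar Z)=\{0\}$ by Clarke's limiting-gradient formula~\cite{clarke2013functional}. Summing over $n$, $\partial_C R(\bar Z)$ reduces to the singleton $\{\bar Z\bar\Lambda\}$, hence $R$ is strictly differentiable at $\bar Z$, and the Clarke sum rule — an equality here because one summand is strictly differentiable — gives the displayed identity. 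This is the step I expect to be the main obstacle: the delicate point is that $R$ is \emph{not} $C^1$ on a whole neighborhood of $\bar Z$, since it has kinks at nearby points where a row that vanishes at $\bar Z$ is zero while $Z^TZ\neq\bar Z^T\bar Z$, so one must reason through the limiting-gradient description rather than plain continuous differentiability. I would place this computation in the appendix alongside the other properties of the regularizer.

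It then remains to compute $\partial\Phi(\bar Z)$ and substitute. Since $\Phi$ is separable over rows and $\norm{z}_{\bar W}=\norm{\bar W^{1/2}z}_2$, the $n$-th block of $\partial\Phi(\bar Z)$ is the single vector $\bar z_n\bar W/\norm{\bar z_n}_{\bar W}$ when $\bar z_n\neq 0$, and the dual ball $\{v:\norm{v}_{\bar W^{-1}}\le 1\}=\bar W^{1/2}B_2$ when $\bar z_n=0$. Inserting this into $0\in\partial\Phi(\bar Z)+\bar Z\bar\Lambda+\tfrac1\alpha A^T(A\bar Z-Y)$ row by row: for $\bar z_n\neq 0$ one factors $\bar z_n$ out of the first two terms and obtains the stated equation; for $\bar z_n=0$ the term $(\bar Z\bar\Lambda)_n=\bar z_n\bar\Lambda$ vanishes, leaving $\norm{\tfrac1\alpha(A^T(A\bar Z-Y))_n}_{\bar W^{-1}}\le 1$, i.e.\ $\norm{(A^T(A\bar Z-Y))_n}_{\bar W^{-1}}\le\alpha$, and the elementary identity $\norm{v\bar W^{-1}}_{\bar W}^2=v\bar W^{-1}\bar W\bar W^{-1}v^T=\norm{v}_{\bar W^{-1}}^2$ recasts it in the form written in the theorem.
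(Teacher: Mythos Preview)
Your proposal is correct and follows essentially the same approach as the paper. The paper likewise reduces the theorem to Fermat's rule~\eqref{eq:fermat} together with the identity $\partial_C\Psi_\gamma(\bar Z)=\partial m^{\Psi}_{\bar Z}(\bar Z)=\partial\norm{\cdot}_{\bar W,1}(\bar Z)+\bar Z\bar\Lambda$ (stated and proved separately in the appendix as Theorem~\ref{thm:clark_subdifferential}), and then reads off the two displayed conditions from the well-known subdifferential of the weighted $\ell_{\bar W,1}$ norm; the only minor difference is that the paper establishes the subdifferential identity by directly comparing the limiting gradients of $\Psi_\gamma$ and $m^{\Psi}_{\bar Z}$ along sequences with all rows nonzero and invoking Clarke's gradient formula for both, rather than your equivalent route of showing the remainder $R=\Psi_\gamma-\norm{\cdot}_{\bar W,1}$ is strictly differentiable and applying the exact sum rule.
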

\begin{proof}
From Theorem~\ref{thm:clark_subdifferential}, the Clarke subdifferential is given as the convex subdifferential
\[
  \partial_C \Psi_\gamma(\bar Z)
  = \partial m^{\Psi}_{\bar Z}(\bar Z)
  =\partial \norm{\bar Z}_{\bar{W},1} + \bar{Z}\bar{\Lambda},
\]
of the model function at \(\bar{Z}\).
The result follows now from~\eqref{eq:fermat} and the well-known explicit characterization of the convex sub-differential of the \(\ell_{\bar{W},1}\)-norm $\partial \norm{\cdot}_{\bar{W},1}$ for given weight \(\bar{W}\).
\end{proof}

\section{A proximal gradient based algorithm}\label{sec:algorithm}

We assume first that \(\alpha>0\) and \(\gamma\geq 0\) are given, and solve the problem~\eqref{prob:nonconvex_reg_gamma}.
We first derive the idea of the algorithm using a proximal Lagrange method.
Then, we will provide a convergence analysis using the model function~\eqref{eq:model_Psi}, and then we will briefly discuss the adaptive selection of the parameters \(\gamma\) and \(\alpha\).

We note that in the case \(\gamma = 0\), the algorithm may run into cases where \(M = Z^TZ\) is not invertible, which corresponds to a discontinuity in \(\owl\).
We will disregard that possibility for the moment and discuss mitigation strategies in the context of stepsize selection. In general, if the sought after solution of the problem does not fulfill \(\bar{M} = \bar{Z}^T\bar{Z}\) invertible, we will tacitly assume that \(\gamma> 0\) and rely on Proposition~\ref{prop:limit_minimizers}.

\subsection{Proximal Lagrange derivation}
To derive an algorithm, we first aim to isolate the nonsmooth and nonconvex parts of the functional and rewrite the problem~\eqref{prob:nonconvex_reg_gamma} in the constrained form
\begin{equation}
\label{eq:gamma_problem_constr}
\min_{Z\in \R^{N,K} , W \in \R^{K\times K}} \|Z\|_{W,1}+\frac{1}{2\alpha} \|AZ-Y\|^2_\fro\quad
\text{subject to } W = (\gamma I+(1-\gamma)Z^TZ)^{-1},
\end{equation}
where we have introduced a separate variable for the weight \(W\). Note that, similar to the model function~\eqref{eq:model_Psi} postulated above, the functional now contains a weighted convex group sparse norm, if \(W\) is fixed.
To solve the problem above, we consider the Lagrange function
\begin{align*}
\mathcal{L}(Z,W,\Lambda)
&= \sum_{i=1}^N \|z_i\|_W + \frac{1}{2\alpha} \|AZ-Y\|_\fro^2 + \frac{1}{2(1-\gamma)} \tr(\Lambda^T(\gamma I + (1-\gamma)Z^TZ - W^{-1}))\\
&=
\sum_{i=1}^N\|z_i\|_W + \mathcal{M}(Z,W,\Lambda),
\end{align*}
which is split into a part that is convex in \(Z\) and smooth in \(W\) and
a second part \(\mathcal{M}\) that collects the remaining differentiable parts of the Lagrange function \(\mathcal{L}\).
The factor \(1/(2(1-\gamma))\) is added to provide a convenient scaling of the multiplier in the subsequent analysis.

Additionally, we consider the Hilbert space on \(\R^K\) induced by the \(W\) inner product of the form $\langle z,z' \rangle_W= z^TWz'$,  $z,z' \in \R^{K}$.
We note that it also induces an inner product $\langle Z',Z \rangle_{W}= \tr (Z'WZ^T)$ for matrices $Z,Z'\in \R^{N\times K}$.
The idea essential to our strategy is to compute gradients with respect to $Z$ in this inner product.
Given a function $F:\R^{N\times K}\to \R$, $\nabla^WF(Z)=\nabla F(Z)W^{-1}$ denotes the gradient of $F$ in the $W$-Hilbert space, i.e.\ it is the matrix for which the directional derivative can be written as
\[
F^\prime (Z;\Delta Z)
= \lim_{h\to 0}\frac{F(Z+h\Delta Z)-F(Z)}{h}
= \langle \nabla^WF(Z),\Delta Z \rangle_W.
\]
Also, we define the $W$-proximal operator as 
\begin{equation}\label{eq:W-prox}
    \prox_{W,\sigma}(Z')=\arg\min_Z\|Z\|_{W,1}+\frac{1}{2\sigma}\|Z-Z'\|^2_{W,2}.
\end{equation}
Since the choice of the inner product agrees with the vector norm used in the definition of \(\norm{\cdot}_{W,1}\), it can be checked that the closed form solution is given by
\[
\left[\prox_{W,\sigma}(Z)\right]_n = z_n \max\left\{0,\; 1-\frac{\sigma}{\|z_n\|_{W}}\right\}
= z_n \left(1 - \frac{\sigma}{\max\{\sigma,\|z_n\|_{W}\}}\right).
\]
Note that this analytic representation would not be possible for the standard inner product.

The main steps of the algorithm, given a previous iterate \(Z_k\), \(W_k\) (\(k\geq 0\)) are now derived as follows:
First, we compute the update for the multiplier \(\Lambda_k\)
as a solution to the Lagrange stationarity condition $\nabla_W  \mathcal{L}(Z_{k},W_{k},\Lambda_{k}) = 0$.
We observe that 
\[
  \nabla_W \mathcal{L}(Z_k,W_k,\Lambda) 
  = \frac{1}{2}\sum_{\{\, n\colon z^k_n \neq 0 \,\}} \frac{z^k_{n} (z^k_{n})^T}{\sqrt{(z^k_n)^T W_k z^k_n}}
   + \frac{1}{2 (1-\gamma)}W_k^{-1}\Lambda W_k^{-1},
\]
and the equation can be uniquely solved for \(\Lambda = \Lambda_k\) as given is step 4 of Algorithm~\ref{alg:practical} below; cf.\ Definition~\ref{def:model_Psi}. 

Second, with given \(Z_k\), \(W_k\) and \(\Lambda_k\), we select a stepsize \(\sigma_k > 0\) and update
$Z_{k+1} = \prox_{W_{k},\sigma_k}(Z_{k} - \sigma_k G_k)$, where \(G_k = \nabla_Z \mathcal{M}(Z_{k},W_{k},\Lambda_{k})W_{k}^{-1}\) is the \(W_k\)-gradient of the smooth part of the Lagrange function given as
\[
  \nabla_Z \mathcal{M}(Z,W,\Lambda) = \frac{1}{\alpha} A^T(AZ - Y) + Z \Lambda.
\]
In Subsection~\ref{subsec:armijo_stepsize} we introduce and discuss an Armijo line search procedure for finding the  step size.

Finally, we need to update $W$, where we simply set $W_{k+1} = (\gamma I+(1-\gamma)Z_{k+1}^TZ_{k+1})^{-1}$, which solves exactly the constraint. Note that due to this choice, $W_k$ can be eliminated as an explicit iteration variable and only \(Z_0\) needs to be provided to initialize the algorithm.

Our proposed iterative scheme to solving the problem~\eqref{eq:gamma_problem_constr} is summarized with the following steps:  
\begin{algorithm}[H]
  \caption{}
  \label{alg:practical}
 \begin{algorithmic}[1]
   \State Initialize $Z_0$
   \While{not converged}
   \State     $W_{k} = (\gamma I+(1-\gamma)Z_{k}^TZ_{k})^{-1}$
   \State  $\Lambda_{k} = -\sum_{{\{\, n\colon z^k_n \neq 0 \,\}}} \frac{1-\gamma}{\sqrt{(z^k_n)^T W_{k} z^k_n}}W_{k}z^k_n(z^k_n)^T W_{k}$
    \State 
    $G_{k}=(Z_{k}\Lambda_{k}+(1/\alpha)A^T(AZ_{k}-Y) )W_{k}^{-1}$
\State 
    Choose the step size $\sigma_{k} $ with a Armijo line search and set 
    $Z_{k+1} = \prox_{W_{k},\sigma_{k}}(Z_{k} - \sigma_{k}G_{k})$
    \State $k=k+1$.
   \EndWhile \end{algorithmic} 
\end{algorithm}
We note that for \(\gamma=1\),  we have \(W_k = I\), \(\Lambda_k = 0\), and we obtain the classical proximal gradient method for group sparse least squares regression.
Note also that this extension's main additional effort is the computation of the inverse \(W_k\), which is cheap for moderately sized \(K\) compared to \(N\).
  \begin{remark}
    \label{rem:large_K}
  For the case of large \(K\), the effort of direct computation of the inverse \(W_k\) can become prohibitive. {If $Y$ is approximately low rank, this issue can be alleviated by decomposing $Y = Y'Q$, where $Y' \in \mathbb{R}^{M\times r}$ with $r \ll K$ and $Q \in \R^{r\times K}$ with $QQ^T = I \in \R^{r\times r}$, then the optimization problem reduces to a cheaper one where the data matrix $Y$ is replaced by $Y'$ (see Section~\ref{sec:feature_selection} for examples).}
  Generally, an efficient way to compute \(W\) can be given for \(s = \ellzero(Z) < K\) and \(r = \rank(Z) \leq s < K\): First, we decompose  \(Z = P_{I(Z)}Z' = P R Q\), where \(P_{I(Z)} \in \{0,1\}^{N\times s}\) is the selection of columns of the identity matrix corresponding to the support \(I(Z)\) of \(Z\), and the factors \(R \in \R^{s\times r}\) and \(Q\in\R^{r\times K}\) with \(Q\) orthogonal can be computed, e.g., with a QR-decomposition \(Q^T R^T = Z^T P_{I(Z)} = (Z')^T\).
  Then, with \(M = \gamma I + (1-\gamma) Q^TR^TR Q\) we have \(W = M^\dagger = \gamma^{-1}(I - Q^TQ) + Q^T (\gamma I + (1-\gamma) R^TR)^{-1} Q\).
  If we now modify the algorithm to not explicitly store \(W\), but only \(P\), \(Q\) and \(R\), the complexity can be reduced if \(s \ll K\).
  Note that the matrix \(\Lambda_k\) is also in low-rank format as the sum of \(s\) rank-one products.
  In this context, it is critical to ensure that \(s_k = \rank(Z_k)\), remains small from initialization throughout the iterations.
  We postpone a detailed investigation of this to future work.
\end{remark}

\subsection{ Step size selection for descent}\label{subsec:armijo_stepsize}
In this section, we demonstrate that the iteration steps in the proposed algorithm result in the decrease in the loss function and therefore, the algorithm is convergent if the step size \(\sigma_k\) is chosen appropriately.

Let $\widehat Z\in \R^{N\times K}$ be given, which can be thought of as a given (old) iterate \(Z_k\).
Assume $\gamma\in(0,1]$ or $\gamma=0$ and~$\widehat Z^T\widehat Z$ is invertible.
Based on the model function \(m^\Psi_{\widehat{Z}}\) from Definition~\ref{def:model_Psi},
the following function will serve as a model function for the objective function \(J=J_\gamma\):
\begin{equation*}
  \begin{aligned}
    m_{\widehat{Z},L}(Z)
    :&=
    m^\Psi_{\widehat{Z}}(Z)
    + \widehat{F}
     + \frac{1}{\alpha}\langle  A^T(A\widehat{Z} - Y) , Z - \widehat{Z}\rangle
     + \frac{L}{2}\norm{Z-\widehat{Z}}_{\widehat{W},2}^2
    \\
    &=
    \widehat{F}
    + \norm{Z}_{\widehat{W},1}
    + \langle \widehat G, Z - \widehat{Z}\rangle_{\widehat{W}} 
    + \frac{L}{2}\norm{Z-\widehat{Z}}_{\widehat{W},2}^2
  \end{aligned}
\end{equation*}
where 
\begin{equation*}
  \widehat{F} = \frac{1}{2\alpha}\norm{A\widehat{Z} - Y}_\fro^2,
  \quad
  \widehat G = (\widehat{Z}\widehat{\Lambda} + (1/\alpha)A^T(A \widehat{Z} - Y))\widehat{W}^{-1}.
\end{equation*}
We will use this model as an upper bound of \(J\), where the constant \(L\) has to account for the neglected terms in \(\Psi_\gamma\) as well as in the quadratic fitting term.
The new iterate \(Z_{k+1}\) can then be interpreted as the minimizer of the model function for \(\widehat{Z} = Z_k\) and \(L = 1/\sigma_k\).
Due to the complictated structure of the nonlinearity in \(\Psi_\gamma\) we will not give \(L\) analytically, but identify it computationally.
\begin{proposition}\label{prop:model_func_approx}
Let $\gamma\in(0,1]$ or $\widehat Z^T\widehat Z$ be invertible.
Then $m_{\widehat{Z},L}(Z)$ has the following properties
\begin{enumerate}
    \item \label{part:1_prop:model_func_approx} \(J(\widehat{Z}) = m_{\widehat{Z},L}(\widehat{Z})\),
    \item \label{part:2_prop:model_func_approx}
      \(J(Z) \leq m_{\widehat{Z},\widehat{L}}(Z)\) 
       for all \(\norm{Z - \widehat{Z}}_{\widehat{W},2} \leq \bar{\Delta}\) with some universal \(\bar{\Delta} > 0\)
        and \(\widehat{L}\) only dependent on \(\norm{\widehat{Z}}_\op\),
    \item \label{part:3_prop:model_func_approx}  the convex function \(m_{\widehat{Z},L}\) is minimized (uniquely) at
    \[
    Z^+ = \prox_{\widehat{W},1/L}\left(\widehat{Z} - (1/L) \widehat{G}\right).
    \]
\end{enumerate}
\end{proposition}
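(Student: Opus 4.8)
The plan is to establish the three properties in order, using the explicit structure of the model function $m_{\widehat{Z},L}$ and the fact that it has been assembled exactly so that the first-order behavior at $\widehat{Z}$ matches that of $J = \Psi_\gamma + (1/2\alpha)\norm{A\cdot - Y}_\fro^2$.

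For part \eqref{part:1_prop:model_func_approx}, I would simply evaluate $m_{\widehat{Z},L}$ at $Z = \widehat{Z}$: the linear and quadratic correction terms vanish, leaving $\widehat{F} + \norm{\widehat{Z}}_{\widehat{W},1}$. It remains to check that $\norm{\widehat{Z}}_{\widehat{W},1} = \Psi_\gamma(\widehat{Z})$, which is immediate from the definition $\Psi_\gamma(Z) = \norm{Z(\gamma I + (1-\gamma)Z^TZ)^{\dagger/2}}_{2,1} = \norm{Z}_{W,1}$ with $W = (\gamma I + (1-\gamma)\widehat{Z}^T\widehat{Z})^{-1} = \widehat{W}$ (invertibility holds by the hypothesis $\gamma \in (0,1]$ or $\widehat{Z}^T\widehat{Z}$ invertible). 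Hence $m_{\widehat{Z},L}(\widehat{Z}) = \widehat{F} + \Psi_\gamma(\widehat{Z}) = J(\widehat{Z})$, independently of $L$.

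For part \eqref{part:3_prop:model_func_approx}, I would rewrite $m_{\widehat{Z},L}(Z)$ as a constant plus $\norm{Z}_{\widehat{W},1} + (L/2)\norm{Z - (\widehat{Z} - (1/L)\widehat{G})}_{\widehat{W},2}^2$ by completing the square in the $\widehat{W}$-inner product; this uses that $\langle \widehat{G}, Z - \widehat{Z}\rangle_{\widehat{W}} + (L/2)\norm{Z-\widehat{Z}}_{\widehat{W},2}^2$ equals $(L/2)\norm{Z - \widehat{Z} + (1/L)\widehat{G}}_{\widehat{W},2}^2 - (1/2L)\norm{\widehat{G}}_{\widehat{W},2}^2$. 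The minimizer is then, by definition of the $W$-proximal operator in~\eqref{eq:W-prox} with $\sigma = 1/L$, exactly $Z^+ = \prox_{\widehat{W},1/L}(\widehat{Z} - (1/L)\widehat{G})$; uniqueness follows from strong convexity of the quadratic term in the $\widehat{W}$-norm.

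Part \eqref{part:2_prop:model_func_approx} is the main obstacle and the only place requiring real analysis. Here I would split $J - m_{\widehat{Z},L}$ into the quadratic-fidelity contribution and the $\Psi_\gamma$ contribution. The fidelity part is a genuine quadratic, so $\tfrac{1}{2\alpha}\norm{AZ-Y}_\fro^2 = \widehat{F} + \tfrac1\alpha\langle A^T(A\widehat{Z}-Y), Z-\widehat{Z}\rangle + \tfrac{1}{2\alpha}\norm{A(Z-\widehat{Z})}_\fro^2$ exactly, and the last term is bounded by $\tfrac{1}{2\alpha}\norm{A}_\op^2 \norm{Z-\widehat{Z}}_{2,2}^2 \leq \tfrac{1}{2\alpha}\norm{A}_\op^2\norm{\widehat{W}^{-1}}_\op\norm{Z-\widehat{Z}}_{\widehat{W},2}^2$, which is absorbed into $(L/2)\norm{Z-\widehat{Z}}_{\widehat{W},2}^2$ for $L$ large. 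The genuinely delicate term is $\Psi_\gamma(Z) - m^\Psi_{\widehat{Z}}(Z) = \norm{Z}_{W(Z),1} - \norm{Z}_{\widehat{W},1} - \langle \widehat{Z}\widehat{\Lambda}, Z - \widehat{Z}\rangle$, where $W(Z) = (\gamma I + (1-\gamma)Z^TZ)^\dagger$ varies with $Z$. I would show, presumably via the appendix results on the smoothness of $\Psi_\gamma$ (e.g. a second-order Taylor estimate of the map $Z \mapsto \Psi_\gamma(Z)$ restricted to a neighborhood where $\gamma I + (1-\gamma)Z^TZ$ stays uniformly invertible), that $\widehat{Z}\widehat{\Lambda}$ is precisely the derivative of the "weight-dependence" correction, so that $m^\Psi_{\widehat{Z}}$ is a first-order model of $\Psi_\gamma$ at $\widehat{Z}$, and that the remainder is $O(\norm{Z-\widehat{Z}}^2)$ with a constant depending only on $\norm{\widehat{Z}}_\op$ (and $\gamma$), on a ball of some universal radius $\bar\Delta$ chosen small enough that invertibility and the boundedness of all intermediate quantities are maintained. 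Combining the two remainder bounds and taking $\widehat{L}$ to be their sum yields $J(Z) \leq m_{\widehat{Z},\widehat{L}}(Z)$ on that ball, with $\widehat{L}$ depending only on $\norm{\widehat{Z}}_\op$ as claimed. The key quantitative inputs — uniform invertibility of $M(Z)$ on the ball and a Lipschitz/second-order bound for $Z \mapsto W(Z)$ and for the nonlinear part of $\Psi_\gamma$ — are exactly what needs care, and I would lean on the locally Lipschitz and differentiability properties of $\owl$ and $\Psi_\gamma$ proved in the appendix.
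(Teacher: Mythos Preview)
Your proposal is correct and follows essentially the same approach as the paper: part~\ref{part:1_prop:model_func_approx} is immediate, part~\ref{part:3_prop:model_func_approx} is recognized as the definition of the $\widehat{W}$-prox after completing the square, and part~\ref{part:2_prop:model_func_approx} is obtained by splitting the remainder into the exact quadratic expansion of the fidelity term (with constant $\widehat{L}_A = \alpha^{-1}\norm{A}_\op^2\norm{\widehat{W}^{-1}}_\op$) plus the $\Psi_\gamma$ remainder, the latter being exactly the content of Theorem~\ref{thm:first_order_approx} in the appendix. The paper invokes that theorem by name where you say ``presumably via the appendix results,'' so your anticipation is precisely what is done.
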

\begin{proof}
Part~\ref{part:1_prop:model_func_approx} is straightforward. 
Part~\ref{part:2_prop:model_func_approx} follows with
\begin{align*}
J(Z)
=\;&\Psi_\gamma(Z) + \frac{1}{2\alpha}\norm{A Z - Y}_{\fro}^2\\
=\; &\norm{Z}_{\widehat{W},1}
+ \langle \widehat{Z}\widehat{\Lambda} + (1/\alpha)A^T(A \widehat{Z} - Y), Z - \widehat{Z}\rangle
+ \widehat{F}
+ \frac{1}{2\alpha}\norm{A (Z -\widehat{Z})}_\fro^2 + R(Z,\widehat Z)\\
=\; &
 m^\Psi_{\widehat{Z}}(Z)  +  R_2(Z,\widehat Z)
\end{align*}
and the estimate 
\[
\abs{R_2(Z,\widehat{Z})} 
\leq \frac{\bar{L} + \widehat{L}_A}{2} \|Z-\widehat{Z}\|^2_{\widehat W, 2} = \frac{\widehat{L}}{2} \|Z-\widehat{Z}\|^2_{\widehat W, 2},
\]
with Theorem~\ref{thm:first_order_approx} 
for some $\bar{L} > 0$ assuming \(\norm{Z-\widehat{Z}}_{\widehat{W},2} \leq \bar{\Delta}\),
and the appropriate Lipschitz constant of the quadratic part is given as
\[
\widehat{L}_A 
 = \frac{1}{\alpha} \left(\sup_{\norm{Z}_{\widehat{W},2} \leq 1} \norm{A Z}_{\fro}\right)^2
 = \frac{\norm{A}_\op^2\norm{\widehat{W}^{-1}}_\op}{\alpha}
 = \frac{\norm{A}_\op^2(\gamma + (1-\gamma)\norm{\widehat{Z}}^2_\op)}{\alpha}.
\]

To prove part~\ref{part:3_prop:model_func_approx}, notice that in the definition given by formula~\eqref{eq:W-prox}, if we replace $Z'$ by  $Z^+ = \widehat{Z} - (1/L) \widehat{G}$ and disregard some terms not dependent on $Z$, we arrive at $m_{\widehat{Z},L}(Z)$.
\end{proof}

\begin{corollary}\label{cor:prox_equiv_cond}
       If \(\bar{Z}\) is a local minimum of \(J\) then, for any \(L>0\), \(\bar{Z}\) is the unique minimizer of \(m_{\bar{Z},L}\), and 
       \[
    \bar Z = \prox_{\bar{W},1/L}\left(\bar{Z} - (1/L) \bar{G}\right),
    \]
    where \(\bar{G} = (\bar{Z}\bar{\Lambda}+(1/\alpha)A^T(A\bar{Z}-Y))\bar{W}^{-1}\).
\end{corollary}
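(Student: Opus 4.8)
The plan is to deduce the identity from Fermat's rule for $J=J_\gamma$ at $\bar Z$ together with the description of the Clarke subdifferential that was already set up for Theorem~\ref{thm:opt_cond_model}, and from the observation that this Clarke subdifferential coincides with the ordinary (convex) subdifferential of the model function $m_{\bar Z,L}$ at $\bar Z$, for \emph{every} $L>0$. First I would record that everything is well defined at $\bar Z$: under the standing assumption of this subsection (either $\gamma>0$, or $\gamma=0$ and $\bar Z^T\bar Z$ invertible), the matrix $\bar M=\gamma I+(1-\gamma)\bar Z^T\bar Z$ is symmetric positive definite, so $\bar W=\bar M^{-1}$ and the quantities $\bar\Lambda$, $\bar G$ and $m_{\bar Z,L}$ make sense, and $J_\gamma$ is locally Lipschitz near $\bar Z$; hence Fermat's rule~\eqref{eq:fermat} applies, giving $0\in\partial_C J_\gamma(\bar Z)=\partial_C\Psi_\gamma(\bar Z)+\tfrac1\alpha A^T(A\bar Z-Y)$, where by Theorem~\ref{thm:clark_subdifferential} one has $\partial_C\Psi_\gamma(\bar Z)=\partial\norm{\bar Z}_{\bar W,1}+\bar Z\bar\Lambda$.

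Next I would compute $\partial m_{\bar Z,L}(\bar Z)$. Writing $m_{\bar Z,L}(Z)=\widehat F+\norm{Z}_{\bar W,1}+\langle\bar G,Z-\bar Z\rangle_{\bar W}+\tfrac L2\norm{Z-\bar Z}_{\bar W,2}^2$, the last two summands are smooth; converting the $\bar W$-inner product to the trace inner product, their joint gradient at $Z=\bar Z$ equals $\bar G\bar W+L\cdot 0=\bar Z\bar\Lambda+\tfrac1\alpha A^T(A\bar Z-Y)$, using $\bar G\bar W=\bar Z\bar\Lambda+\tfrac1\alpha A^T(A\bar Z-Y)$ from the definition of $\bar G$. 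By the convex sum rule,
\[
  \partial m_{\bar Z,L}(\bar Z)
  =\partial\norm{\bar Z}_{\bar W,1}+\bar Z\bar\Lambda+\tfrac1\alpha A^T(A\bar Z-Y)
  =\partial_C J_\gamma(\bar Z),
\]
which in particular does not depend on $L>0$.

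Finally, combining the two facts, $0\in\partial_C J_\gamma(\bar Z)=\partial m_{\bar Z,L}(\bar Z)$ for every $L>0$; since $m_{\bar Z,L}$ is convex, this means precisely that $\bar Z$ is a global minimizer of $m_{\bar Z,L}$, and by part~\ref{part:3_prop:model_func_approx} of Proposition~\ref{prop:model_func_approx} the minimizer is unique and equals $\prox_{\bar W,1/L}(\bar Z-(1/L)\bar G)$, which is the asserted fixed-point identity. The step I expect to require the most care is the identification $\partial m_{\bar Z,L}(\bar Z)=\partial_C J_\gamma(\bar Z)$: one must keep track of the fact that $m_{\bar Z,L}$ uses the $\bar W$-inner product in its linear and quadratic terms while the Clarke subdifferential of $\Psi_\gamma$ is written in the trace inner product, check that the linear parts match exactly (they do, by the definition of $\bar G$), and check that the quadratic term contributes nothing at $\bar Z$. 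As a self-contained alternative that avoids subdifferentials altogether, one can take the specific large value $L=\widehat L$ from part~\ref{part:2_prop:model_func_approx} of Proposition~\ref{prop:model_func_approx}: the majorization $J\le m_{\bar Z,\widehat L}$ near $\bar Z$ together with $J(\bar Z)=m_{\bar Z,\widehat L}(\bar Z)$ shows $\bar Z$ is a local, hence global, minimizer of the convex function $m_{\bar Z,\widehat L}$, so by $\widehat L$-strong convexity in the $\bar W$-norm $m_{\bar Z,\widehat L}(Z)\ge m_{\bar Z,\widehat L}(\bar Z)+\tfrac{\widehat L}2\norm{Z-\bar Z}_{\bar W,2}^2$; subtracting $\tfrac{\widehat L-L}2\norm{Z-\bar Z}_{\bar W,2}^2$ then yields $m_{\bar Z,L}(Z)\ge m_{\bar Z,L}(\bar Z)$ for all $0<L\le\widehat L$ as well, and the case $L\ge\widehat L$ is immediate since $m_{\bar Z,L}-m_{\bar Z,\widehat L}$ is nonnegative and vanishes at $\bar Z$.
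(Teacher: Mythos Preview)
Your argument is correct. Your primary route---Fermat's rule plus the identification $\partial m_{\bar Z,L}(\bar Z)=\partial_C J_\gamma(\bar Z)$ via Theorem~\ref{thm:clark_subdifferential}---is a genuinely different approach from the paper's. The paper argues directly by majorization: parts~\ref{part:1_prop:model_func_approx} and~\ref{part:2_prop:model_func_approx} of Proposition~\ref{prop:model_func_approx} give $m_{\bar Z,\widehat L}(\bar Z)=J(\bar Z)\le J(Z)\le m_{\bar Z,\widehat L}(Z)$ near $\bar Z$, so $\bar Z$ is a local (hence global, by convexity) minimizer of $m_{\bar Z,\widehat L}$, and part~\ref{part:3_prop:model_func_approx} then yields the prox identity. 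Your ``self-contained alternative'' is precisely this argument, with the additional virtue that you spell out how to pass from the specific $\widehat L$ to an arbitrary $L>0$ (via strong convexity for $L\le\widehat L$ and monotonicity in $L$ for $L\ge\widehat L$)---a step the paper leaves implicit. Your subdifferential route has the advantage of giving the result for all $L$ at once, without needing the local majorization radius $\bar\Delta$, at the cost of invoking the Clarke calculus machinery; the paper's route is more elementary but leans on Proposition~\ref{prop:model_func_approx} in full.
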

\begin{proof}
    The first claim follows from parts~\ref{part:1_prop:model_func_approx} and~\ref{part:2_prop:model_func_approx} in Proposition~\ref{prop:model_func_approx}. Then, the optimality conditions arise from part~\ref{part:3_prop:model_func_approx}  in Proposition~\ref{prop:model_func_approx}, by realizing that \(Z^+ = \bar{Z}\).
\end{proof}

\begin{remark}
  \label{rem:prox_equiv_cond}
    The condition in Corollary~\ref{cor:prox_equiv_cond} is equivalent to the first order conditions in Theorem~\ref{thm:opt_cond_model}.
\end{remark}

\begin{remark}
\label{rem:algorithm_for_K1}
Let us briefly discuss the case \(K=1\) and \(\gamma = 0\), which reduces to the \(\ell_1/\ell_2\) norm regularization.
Here, \(\widehat{W} = \norm{\widehat{Z}}_2^{-2}\),
\(\widehat{\Lambda} = -\norm{\widehat{Z}}_2^{-3} \norm{\widehat{Z}}_1\) and the model function up to the fixed additive constant \(m^\Psi_{\widehat{Z},0}(0)\) and multiplicative constant \(1/\norm{\widehat{Z}}_2\) is given as
\[
\psi_{\widehat{Z},L}(Z)
= \frac{1}{\norm{\widehat{Z}}_2} \left(m^\Psi_{\widehat{Z},L}(Z) - m^\Psi_{\widehat{Z},0}(0)\right)
= \norm{Z}_1
- \Big\langle\frac{\norm{\widehat{Z}}_1}{\norm{\widehat{Z}}_2}\,\frac{\widehat{Z}}{\norm{\widehat{Z}}_2},\; Z \Big\rangle
+ \langle \widehat{H}, Z \rangle
+ \frac{L}{2} \norm{Z - \widehat{Z}}^2_2
\]
with \(\widehat{H} = (\norm{\widehat{Z}}_2/\alpha) A^T(A\widehat{Z} - Y)\). Replacing \(\langle\widehat{H},Z\rangle\) with the indicator function of the constraint \(A Z = Y\), as is appropriate for solving formulation~\eqref{prob:nonconvex} compared to the regularized version~\eqref{prob:nonconvex_reg}, and setting \(\widehat{Z} = Z_k\) and \(Z^{k+1} = \argmin_Z \psi_{Z_k,L}(Z) = \argmin_Z m^\Psi_{Z_k,L}(Z)\) we obtain exactly \cite[Algorithm~2]{9057443} (see also~\cite{doi:10.1137/20M1355380}).
\end{remark}

We now return to the iterative procedure outlined in Algorithm~\ref{alg:practical}, which generates the sequence \(Z_k\).
We will show that the method is a descent method, i.e., \(J(Z_{k+1})\leq J(Z_k)\).
Note that this implies that
\begin{equation}
\label{eq:Zk_bounded}
\norm{Z_k}_{W_k,1} + \frac{1}{2\alpha} \norm{A Z_k - Y}_\fro^2 \leq J(Z_0).
\end{equation}
For the following analysis, we need to ensure that \(Z_k\) remains bounded, which unfortunately does not directly follow from this inequality due to the fact that \(A\) may have nontrivial kernel and \(\Psi_\gamma \leq \sqrt{(1-\gamma)NK}\) (see Proposition~\ref{prop:relax_basic_props}).
Similar to Proposition~\ref{prop:unconstrained_bounded}, a uniform bound on \(Z_k\) could be deduced under appropriate assumptions on \(\alpha\), \(\delta\), \(Z_0\) and \(A\) using the \((r,s)\)-SSP. 
For simplicity, we from now on simply assume that the sequence \(Z_k\) remains bounded:
\begin{equation}
  \label{eq:assu_Z_bounded}
  \norm{Z_k}_\op \leq C_0 \quad\text{for all } k \geq 1.
\end{equation}
This will allow us to bound certain constants uniformly, in particular \(\norm{W_k^{-1}}_\op = (\gamma + (1-\gamma)\norm{Z_k}^2_\op)\).

In order to show that descent can be fulfilled in every iteration as in~\ref{prop:model_func_approx} with \(\widehat{Z} = Z_k\), we have to replace the unknown constant \(L\) in $m_{Z_{k}, L}(Z)$ with a step size $\sigma_{k} \sim 1/\widehat{L}$.
In practice, since this constant depends on the linearization point \(\widehat{Z} = Z_{k}\) and the size of the update \(Z_{k+1} - Z_k\), we select it by an adaptive Armijo-line search procedure described below.
Given a candidate step size $\sigma>0$, we define the candidate next iteration as  
\begin{equation}\label{eq:next_step_linesearch}
    Z^+_\sigma = \prox_{W_{k},\sigma}\left(Z_{k} - \sigma G_{k}\right)
\end{equation}
where $G_{k}$ is defined as before as \(G_{k}=(Z_{k}\Lambda_{k}+(1/\alpha)A^T(AZ_{k}-Y) )W_{k}^{-1}\).
We define the ``predicted decrease'' associated to a candidate stepsize \(\sigma > 0\) as
\begin{align*}
\operatorname{pred}_\sigma(Z_{k})
= \norm{Z^+_\sigma}_{W_{k},1}-\norm{Z_{k}}_{W_{k},1}
           + \langle G_{k}, Z^{+}_\sigma - Z_{k}\rangle.
\end{align*}
 The predicted decrease serves as an optimistic estimate for how much descent in the functional can be expected in the current iteration, and can be used as an indicator for stationarity of \(Z_{k}\) as characterized by the next theorem. 
\begin{proposition}\label{prop:pred_bound}
For any \(\sigma > 0\) it holds 
\[
\operatorname{pred}_\sigma(Z_{k})
= m_{Z_{k},1/\sigma}(Z^+_\sigma) - m_{Z_{k},1/\sigma}(Z_{k})
- \frac{1}{2\sigma}\norm{Z^+_\sigma - Z_{k}}^2_{W_{k},2}
\leq 0
\]
and, thus,
\begin{equation}
\label{eg:prox_pred_ineg}
  \frac{1}{2\sigma}\norm{Z^+_\sigma - Z_{k}}^2_{W_{k},2} \leq -\operatorname{pred}_\sigma(Z_{k}).
\end{equation}
\end{proposition}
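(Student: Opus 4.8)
The plan is to obtain the claimed equality by a direct algebraic manipulation of the model function, and then to read off the two inequalities from the minimizing property of \(Z^+_\sigma\).

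\emph{Step 1 (the identity).} Specialize the model function with \(\widehat{Z} = Z_k\), \(\widehat{W} = W_k\), \(\widehat{G} = G_k\) and \(L = 1/\sigma\), so that, in the notation of Section~\ref{subsec:armijo_stepsize},
\[
  m_{Z_k, 1/\sigma}(Z) = \widehat{F} + \norm{Z}_{W_k,1} + \la G_k,\, Z - Z_k\ra_{W_k} + \frac{1}{2\sigma}\norm{Z - Z_k}^2_{W_k,2}.
\]
Evaluating at \(Z = Z_k\) the linear and quadratic terms vanish, giving \(m_{Z_k,1/\sigma}(Z_k) = \widehat{F} + \norm{Z_k}_{W_k,1}\); evaluating at \(Z = Z^+_\sigma\) and subtracting, the constant \(\widehat{F}\) cancels and one obtains
\[
  m_{Z_k,1/\sigma}(Z^+_\sigma) - m_{Z_k,1/\sigma}(Z_k)
  = \Big(\norm{Z^+_\sigma}_{W_k,1} - \norm{Z_k}_{W_k,1} + \la G_k,\, Z^+_\sigma - Z_k\ra\Big)
  + \frac{1}{2\sigma}\norm{Z^+_\sigma - Z_k}^2_{W_k,2},
\]
where the parenthesized expression is exactly \(\operatorname{pred}_\sigma(Z_k)\). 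Rearranging yields the first displayed equality of the proposition. (The only point requiring a little care here is bookkeeping: which terms carry the \(W_k\)-inner product versus the Euclidean one. This is precisely why the model function of Definition~\ref{def:model_Psi} and the \(W_k\)-proximal map~\eqref{eq:W-prox} were set up compatibly, so that all the cross terms match.)

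\emph{Step 2 (the inequalities).} By Proposition~\ref{prop:model_func_approx}, part~\ref{part:3_prop:model_func_approx}, applied with \(\widehat{Z} = Z_k\) and \(L = 1/\sigma\), the convex function \(m_{Z_k,1/\sigma}\) is minimized uniquely at \(\prox_{W_k,\sigma}(Z_k - \sigma G_k)\), which by~\eqref{eq:next_step_linesearch} is precisely \(Z^+_\sigma\). In particular \(m_{Z_k,1/\sigma}(Z^+_\sigma) \le m_{Z_k,1/\sigma}(Z_k)\), so the first difference on the right-hand side of the identity is non-positive. Combining this with Step~1,
\[
  \operatorname{pred}_\sigma(Z_k)
  = \big(m_{Z_k,1/\sigma}(Z^+_\sigma) - m_{Z_k,1/\sigma}(Z_k)\big) - \frac{1}{2\sigma}\norm{Z^+_\sigma - Z_k}^2_{W_k,2}
  \le -\frac{1}{2\sigma}\norm{Z^+_\sigma - Z_k}^2_{W_k,2}
  \le 0,
\]
and moving the quadratic term across gives~\eqref{eg:prox_pred_ineg}.

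\emph{On the main difficulty.} There is no genuine obstacle: this is the standard ``sufficient decrease of a prox step'' identity transported to the \(W_k\)-geometry, and everything reduces to the algebra of Step~1 together with the already-established minimizing property in Proposition~\ref{prop:model_func_approx}. If a sharper constant were desired, one could instead invoke the \((1/\sigma)\)-strong convexity of \(m_{Z_k,1/\sigma}\) in the \(W_k\)-norm to get \(\operatorname{pred}_\sigma(Z_k) \le -\frac{1}{\sigma}\norm{Z^+_\sigma - Z_k}^2_{W_k,2}\), but the factor \(\tfrac12\) stated here is all that the subsequent Armijo line-search analysis requires.
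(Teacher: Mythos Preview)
Your proof is correct and follows essentially the same route as the paper: verify the identity by expanding the model function at \(Z_k\) and \(Z^+_\sigma\), then invoke Proposition~\ref{prop:model_func_approx}, part~\ref{part:3_prop:model_func_approx}, to conclude \(m_{Z_k,1/\sigma}(Z^+_\sigma)\le m_{Z_k,1/\sigma}(Z_k)\). The paper's proof is just a two-line version of what you spelled out; your parenthetical remark about the \(W_k\)- versus Euclidean inner product is the only subtlety, and it is exactly what the paper sweeps under ``easy to verify.''
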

\begin{proof}
  The alternate expression for $\operatorname{pred}_\sigma(Z_{k})$  is easy to verify.
  The inequality $\operatorname{pred}_\sigma(Z_{k})\leq 0$  holds since $ m_{Z_{k},1/\sigma}(Z^+_\sigma) \leq m_{Z_{k},1/\sigma}(Z_{k})$ due to part~\ref{part:3_prop:model_func_approx} in Proposition~\ref{prop:model_func_approx}.
\end{proof}
To determine an appropriate step size, we provide a (large) inital guess \(\sigma_{-1} = \sigma_{\max}\),  in each iteration take the largest \(\sigma \in \{\, \beta^j \min\{\,\sigma_{\max},\; \sigma_{k-1}/\beta\,\} \;|\; j = 0,1,2, \ldots \,\}\) for some line-search parameter \(\beta \in (0,1)\) that achieves a fraction of the predicted decrease, i.e.\ such that it holds
\begin{equation}
  \label{eq:linesearch}
   M^+_\sigma = \gamma I + (1-\gamma)(Z^+_\sigma)^TZ^+_\sigma \text{ is invertible and }
J(Z^+_\sigma) - J(Z_k)
\leq \kappa \operatorname{pred}_\sigma(Z_{k}),
\end{equation}
where \(\kappa \in (0,1)\) is an algorithmic constant.
By the next result, for some $\sigma_k$ chosen by the Armijo line search the condition in~\eqref{eq:linesearch} will be satisfied.
\begin{proposition}
\label{prop:ls_Lipschitz}
Given the bounds~\eqref{eq:Zk_bounded} and~\eqref{eq:assu_Z_bounded}, there exists a \(\bar{\sigma}\)
such that~\eqref{eq:linesearch} is fulfilled for any \(\sigma \leq \bar{\sigma}\).
\end{proposition}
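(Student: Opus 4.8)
The plan is to combine the upper bound on $J$ by the model function $m_{Z_k,\widehat L}$ from Proposition~\ref{prop:model_func_approx}(\ref{part:2_prop:model_func_approx}) with the characterization of $\operatorname{pred}_\sigma$ from Proposition~\ref{prop:pred_bound}, showing that for $\sigma$ small enough the Armijo test~\eqref{eq:linesearch} is passed. First I would fix $\widehat Z = Z_k$ and recall that, thanks to the uniform bound~\eqref{eq:assu_Z_bounded} on $\norm{Z_k}_\op$, the constants $\widehat L_A = \norm{A}_\op^2(\gamma + (1-\gamma)C_0^2)/\alpha$ and the constant $\bar L$ coming from Theorem~\ref{thm:first_order_approx} are bounded \emph{uniformly in $k$}; write $\widehat L_0 := \bar L + \widehat L_A$ for this uniform bound and $\bar\Delta$ for the corresponding radius. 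Likewise $\norm{W_k^{-1}}_\op = \gamma + (1-\gamma)\norm{Z_k}_\op^2$ is uniformly bounded, so the equivalence of the $W_k$-norm and the Frobenius norm is uniform.

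Next I would establish that the candidate step $Z_\sigma^+$ stays inside the validity region of the model, i.e.\ $\norm{Z_\sigma^+ - Z_k}_{W_k,2} \leq \bar\Delta$ once $\sigma$ is small. From the nonexpansiveness of the $W_k$-proximal operator and the defining formula~\eqref{eq:next_step_linesearch}, $\norm{Z_\sigma^+ - Z_k}_{W_k,2}$ is controlled by $\sigma$ times $\norm{G_k}_{W_k,2}$ plus the distance of $Z_k$ from its own prox; more directly, the inequality~\eqref{eg:prox_pred_ineg} from Proposition~\ref{prop:pred_bound} gives $\tfrac{1}{2\sigma}\norm{Z_\sigma^+ - Z_k}_{W_k,2}^2 \leq -\operatorname{pred}_\sigma(Z_k)$, and $-\operatorname{pred}_\sigma(Z_k)$ is itself bounded by a constant depending only on the uniform data (through $\norm{Z_k}_{W_k,1} \leq J(Z_0)$, $\norm{G_k}$, and $C_0$), so $\norm{Z_\sigma^+ - Z_k}_{W_k,2}^2 = O(\sigma)$. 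Hence there is a threshold $\sigma_1 > 0$, uniform in $k$, below which $Z_\sigma^+$ lies in the ball of radius $\bar\Delta$; on that ball Proposition~\ref{prop:model_func_approx}(\ref{part:2_prop:model_func_approx}) gives $J(Z_\sigma^+) \leq m_{Z_k,\widehat L_0}(Z_\sigma^+)$. Continuity of $\sigma\mapsto Z_\sigma^+$ and of $Z\mapsto (Z^TZ)$ also makes $M_\sigma^+$ invertible for $\sigma$ small (it is invertible at $\sigma$ where $Z_\sigma^+$ is near $Z_k$, since $M_k$ is).

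Then I would run the standard descent-lemma computation. Using $J(\widehat Z) = m_{Z_k,1/\sigma}(Z_k)$ (Proposition~\ref{prop:model_func_approx}(\ref{part:1_prop:model_func_approx})) and the identity in Proposition~\ref{prop:pred_bound},
\begin{align*}
J(Z_\sigma^+) - J(Z_k)
&\leq m_{Z_k,\widehat L_0}(Z_\sigma^+) - m_{Z_k,1/\sigma}(Z_k) \\
&= \Big(m_{Z_k,1/\sigma}(Z_\sigma^+) - m_{Z_k,1/\sigma}(Z_k)\Big)
   + \tfrac{1}{2}\big(\widehat L_0 - \tfrac{1}{\sigma}\big)\norm{Z_\sigma^+ - Z_k}_{W_k,2}^2 \\
&= \operatorname{pred}_\sigma(Z_k)
   + \tfrac{1}{2}\widehat L_0\,\norm{Z_\sigma^+ - Z_k}_{W_k,2}^2.
\end{align*}
Feeding in~\eqref{eg:prox_pred_ineg}, the last term is at most $\sigma\widehat L_0\,(-\operatorname{pred}_\sigma(Z_k))$, so
\[
J(Z_\sigma^+) - J(Z_k) \leq \big(1 - \sigma\widehat L_0\big)\operatorname{pred}_\sigma(Z_k) \leq \kappa\,\operatorname{pred}_\sigma(Z_k)
\]
as soon as $\sigma \leq (1-\kappa)/\widehat L_0 =: \sigma_2$ (using $\operatorname{pred}_\sigma(Z_k) \leq 0$). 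Setting $\bar\sigma := \min\{\sigma_1,\sigma_2,\sigma_{\max}\}$, both conditions in~\eqref{eq:linesearch} hold for all $\sigma \leq \bar\sigma$, which is what is claimed; I would remark that $\bar\sigma$ is independent of $k$, so the backtracking terminates in a bounded number of steps each iteration.

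The main obstacle I anticipate is the second step: verifying rigorously that $Z_\sigma^+$ stays within the model's validity radius $\bar\Delta$ (and that $M_\sigma^+$ stays invertible) \emph{before} one is allowed to invoke the upper bound $J \leq m_{Z_k,\widehat L_0}$ — there is a mild circularity in that the bound used to control $\norm{Z_\sigma^+ - Z_k}$ via~\eqref{eg:prox_pred_ineg} must not itself presuppose the descent estimate. This is resolved by noting that $\operatorname{pred}_\sigma(Z_k) \leq 0$ and the non-expansiveness of $\prox_{W_k,\sigma}$ give an \emph{unconditional} bound $\norm{Z_\sigma^+ - Z_k}_{W_k,2} \leq \sigma\,\norm{G_k + \partial\text{-term}}$, i.e.\ of order $\sigma$, independent of the Armijo condition; uniform boundedness of $\norm{G_k}_{W_k,2}$ then closes the argument. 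The rest is bookkeeping with uniform constants.
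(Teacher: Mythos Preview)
Your proposal is correct and follows essentially the same approach as the paper: bound $\norm{Z^+_\sigma - Z_k}_{W_k,2}$ by $O(\sigma)$ via the prox estimate (the paper uses $\norm{Z^+_\sigma - Z_k}_{W_k,2} \leq \sigma\norm{G_k}_{W_k,2}$ directly, which is exactly the non-expansiveness route you settle on in your final paragraph), invoke the model upper bound from Proposition~\ref{prop:model_func_approx}(\ref{part:2_prop:model_func_approx}), and then rearrange to obtain the Armijo inequality for $\sigma \leq (1-\kappa)/\widehat L_0$. The only cosmetic difference is that the paper splits $\operatorname{pred}_\sigma + \tfrac{L_0}{2}\norm{\cdot}^2$ into three terms and checks their signs separately, whereas you substitute~\eqref{eg:prox_pred_ineg} to get the factor $(1-\sigma\widehat L_0)$ in front of $\operatorname{pred}_\sigma$; both yield the identical threshold.
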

\begin{proof}
In order to apply part~\ref{part:2_prop:model_func_approx} of Proposition~\ref{prop:model_func_approx}, we first need a bound on the difference \(Z^+_\sigma - Z_k\).
By a standard estimate for the Prox-operator and the triangle inequality,
\[
  \norm{Z^+_\sigma - Z_k}_{W_k,2} \leq \sigma \norm{G_k}_{W_k,2}
  \leq \sigma\left(\norm{Z_k\Lambda_k}_{W_k^{-1},2} + \frac{1}{\alpha}\norm{A^T(AZ_{k}-Y)}_{W_k^{-1},2}\right).
\]
Furthermore as in Remark~\ref{rem:Lambda},
\[
  \norm{Z_k\Lambda_k}_{W_k^{-1},2} 
  \leq  \norm{Z_kW_k^{1/2}}_{\op}\norm{W_k^{-1/2}\Lambda_kW_k^{-1/2}}_{\fro}
  \leq (1-\gamma)^{1/2} \norm{Z_k}_{W_k,1}
\]
and
\[
\frac{1}{\alpha}\norm{A^T(AZ_{k}-Y)}_{W_k^{-1},2} \leq \sqrt{L_A}\norm{AZ_{k}-Y}_{\fro}.
\]
By~\eqref{eq:Zk_bounded} and~\eqref{eq:assu_Z_bounded}, both of these quantities are bounded
by a universal constant, and thus there is a \(\bar{g}\)
such that \(\norm{Z^+_\sigma - Z_k}_{W_k,2} \leq \sigma \bar{g}\).
Thus, for \(\sigma\) small enough, by Proposition~\ref{prop:model_func_approx} we know that
there exists a constant \(L_0\)  (dependent only on \( \norm{Z_k}_\op \leq C_0\)), such that
\begin{align*}
J(Z^+_\sigma) - J(Z_{k})
&\leq m_{Z_{k},{L}_0}(Z^+_\sigma) - J(Z_{k}) \\
&= m_{Z_{k},{L}_0}(Z^+_\sigma) - m_{Z_{k},{L}_0}(Z_{k}) \\
&= \operatorname{pred}_\sigma(Z_{k})
 + \frac{{L}_0}{2} \norm{Z^+_\sigma - Z_k}^2_{W_k,2}\\
&= \kappa\operatorname{pred}_\sigma(Z_{k})
 + (1-\kappa) \left[\operatorname{pred}_\sigma(Z_{k}) + \frac{1}{2\sigma} \norm{Z^+_\sigma - Z_{k}}^2_{W_k,2} \right]  +\frac{\sigma{L}_0 - (1-\kappa)}{2\sigma} \norm{Z^+_\sigma - Z_{k}}_{W_k,2}.
\end{align*}
The second term is negative due to Proposition~\ref{prop:pred_bound}. The third term is negative for
\(
\sigma \leq (1-\kappa)/{L}_{0}.
\)
\end{proof}

\begin{proposition}\label{prop:iteration_algorithm}
  Consider \(Z_k\) started from a given \(Z_0\) (with either \(\gamma > 0\) or \(Z_0^TZ_0\) invertible)
  by \(Z_{k+1} = Z^+_{\sigma_k}\) from~\eqref{eq:next_step_linesearch} and \(\sigma_k\) chosen according to~\eqref{eq:linesearch}.
  Finally, assume that~\eqref{eq:assu_Z_bounded} is fulfilled.
  Then the following holds:
\begin{enumerate}
    \item \label{part:1_iteration_algorithm}
        The descent in every iteration can be estimated by
        \begin{align*}
           J(Z_{k+1}) - J(Z_{k})
           \leq \kappa \operatorname{pred}_{\sigma_k}(Z_{k}) \leq 0.
        \end{align*}
    \item \label{part:2_iteration_algorithm}  \(\inf_k \sigma_k > 0\).
    \item \label{part:3_iteration_algorithm} For any accumulation point \(Z'\) of sequence $Z_k$
       (with either \(\gamma > 0\) or \((Z')^TZ'\) invertible)
      the first order necessary condition is fulfilled.
\end{enumerate}
\end{proposition}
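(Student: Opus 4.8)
The plan is to handle the three claims in sequence, relying on the model-function estimates of Proposition~\ref{prop:model_func_approx}, the predicted-decrease identity of Proposition~\ref{prop:pred_bound}, and the line-search termination bound of Proposition~\ref{prop:ls_Lipschitz} (whose constant $L_0$, and hence the admissible stepsize threshold $\bar\sigma = (1-\kappa)/L_0$, depends only on the uniform bound $C_0$ of~\eqref{eq:assu_Z_bounded}).

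Part~\ref{part:1_iteration_algorithm} is essentially by construction: the accepted step $\sigma_k$ passes the Armijo test~\eqref{eq:linesearch} with $Z^+_{\sigma_k} = Z_{k+1}$, so $J(Z_{k+1}) - J(Z_k) \le \kappa\,\operatorname{pred}_{\sigma_k}(Z_k)$, and Proposition~\ref{prop:pred_bound} gives $\operatorname{pred}_{\sigma_k}(Z_k) \le 0$; the only remaining point, that the backtracking loop terminates in finitely many steps, is precisely Proposition~\ref{prop:ls_Lipschitz}. For Part~\ref{part:2_iteration_algorithm} I would argue by the standard Armijo bound: since~\eqref{eq:linesearch} holds for every $\sigma \le \bar\sigma$ with $\bar\sigma$ uniform in $k$, and the candidate stepsizes in iteration $k$ are $\beta^j\min\{\sigma_{\max},\sigma_{k-1}/\beta\}$ with the largest admissible one accepted, backtracking can never drive $\sigma_k$ below $\beta\bar\sigma$ unless the starting guess was already smaller. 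A short induction from $\sigma_{-1} = \sigma_{\max}$ then yields $\sigma_k \ge \min\{\sigma_{\max},\beta\bar\sigma\} > 0$ for all $k$, while $\sigma_k \le \sigma_{\max}$ is trivial; hence $\inf_k\sigma_k > 0$.

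For Part~\ref{part:3_iteration_algorithm} the plan is: (i) show the per-step increments vanish, (ii) pass to the limit in the prox fixed-point characterization along a convergent subsequence, and (iii) identify the limiting relation with the first-order conditions. Since $J \ge 0$ and $J(Z_k)$ is nonincreasing by Part~\ref{part:1_iteration_algorithm}, the sequence $J(Z_k)$ converges; telescoping the estimate of Part~\ref{part:1_iteration_algorithm} gives $\sum_k (-\operatorname{pred}_{\sigma_k}(Z_k)) < \infty$, so $\operatorname{pred}_{\sigma_k}(Z_k) \to 0$, and then~\eqref{eg:prox_pred_ineg} together with $\sigma_k \le \sigma_{\max}$ forces $\norm{Z_{k+1} - Z_k}_{W_k,2} \to 0$. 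Now take a subsequence $Z_{k_j} \to Z'$ with $(Z')^TZ'$ invertible (or $\gamma > 0$); on a neighborhood of $Z'$ the matrices $W_{k_j}$ stay uniformly positive definite, so $\norm{Z_{k_j+1} - Z_{k_j}}_\fro \to 0$, hence $Z_{k_j+1} \to Z'$ as well, and along a further subsequence $\sigma_{k_j} \to \sigma' \in [\inf_k\sigma_k,\sigma_{\max}]$. Using continuity of $Z \mapsto W = (\gamma I + (1-\gamma)Z^TZ)^{-1}$, of $Z \mapsto \Lambda$ (the map $z \mapsto \norm{z}_W^{-1}Wzz^TW$, extended by $0$ at $z = 0$, is continuous because its norm is $O(\norm{z}_2)$, so the changing row support of $\Lambda_{k_j}$ creates no discontinuity), hence of $Z \mapsto G$, together with joint continuity of the closed form of $\prox_{W,\sigma}$ in $(Z,W,\sigma)$ for $\sigma > 0$, I would pass to the limit in $Z_{k_j+1} = \prox_{W_{k_j},\sigma_{k_j}}(Z_{k_j} - \sigma_{k_j}G_{k_j})$ to obtain $Z' = \prox_{W',\sigma'}(Z' - \sigma'G')$ with $W' = (\gamma I + (1-\gamma)(Z')^TZ')^{-1}$ and $G' = (Z'\Lambda' + (1/\alpha)A^T(AZ' - Y))(W')^{-1}$. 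By Corollary~\ref{cor:prox_equiv_cond} together with Remark~\ref{rem:prox_equiv_cond}, this fixed-point relation (with $L = 1/\sigma'$) is equivalent to the first-order necessary conditions of Theorem~\ref{thm:opt_cond_model}, which finishes the proof.

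The main obstacle is Part~\ref{part:3_iteration_algorithm}, and within it the limit passage: the row-support sets entering $\Lambda_{k_j}$ and $W_{k_j}$ need not stabilize, and for $\gamma = 0$ the lower bound $W_k \succeq cI$ — needed both to turn $\norm{\cdot}_{W_k,2}$-convergence into $\norm{\cdot}_\fro$-convergence and to keep the matrix inverses continuous — is available only on a neighborhood of $Z'$, so all estimates in step (ii) must be localized there. The one genuinely technical ingredient, supplied by the appendix regularity results for $\owl/\Psi_\gamma$, is the continuity of $\Lambda$ across rows passing through zero, made quantitative by the $O(\norm{z}_2)$ bound used above.
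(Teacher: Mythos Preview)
Your proposal is correct and follows essentially the same route as the paper: Part~\ref{part:1_iteration_algorithm} by the Armijo condition plus Proposition~\ref{prop:pred_bound}, Part~\ref{part:2_iteration_algorithm} by the uniform threshold $\bar\sigma$ from Proposition~\ref{prop:ls_Lipschitz} and the backtracking rule, and Part~\ref{part:3_iteration_algorithm} by telescoping to get summability of $-\operatorname{pred}_{\sigma_k}(Z_k)$, then using~\eqref{eg:prox_pred_ineg} and $\inf_k\sigma_k>0$ to force $\norm{Z_{k+1}-Z_k}_{W_k,2}\to 0$, and finally passing to the limit in the prox identity via continuity (the paper cites Propositions~\ref{prop:W_perturb} and~\ref{prop:rank_one_fro} for exactly the continuity of $W$ and of $z\mapsto zz^T/\norm{z}$ that you invoke). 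Your handling of the $\gamma=0$ case, localizing the uniform lower bound on $W_k$ to a neighborhood of $Z'$, and your explicit remark that the row-support of $\Lambda_k$ need not stabilize but the map is still continuous through $z_n=0$, are in fact more carefully spelled out than in the paper's own argument.
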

\begin{proof}
We apply Proposition~\ref{prop:ls_Lipschitz} for part~\ref{part:1_iteration_algorithm}. 
From this, we derive that \(\lim_{k\to \infty} J(Z_k) = \bar{J}\) for some \(\bar{J} \leq J(Z_0)\), since this is a decreasing bounded sequence and also~\eqref{eq:Zk_bounded} is fulfilled.
For part~\ref{part:2_iteration_algorithm}, we note that by construction of the Armijo stepsize and Proposition~\ref{prop:ls_Lipschitz}, we know that \(\sigma_{\max} \geq \sigma_k \geq \min \{\sigma_{\max}, \beta \bar\sigma\}\).
Moreover,
\begin{align*}
\infty > J(Z_0) - \bar{J}
&= \sum_{k=1}^\infty J(Z_{k}) - J(Z_{k}) \\
&\geq - \kappa \sum_{k=1}^\infty \operatorname{pred}_{\sigma_k}(Z_{k}) 
\geq \frac{\kappa}{2}\left(\inf_k\sigma_k\right) \sum_{k=1}^\infty \norm*{\frac{1}{\sigma_k}(Z^+_{\sigma_k} - Z_{k})}^2_{W_{k},2}.  
\end{align*}
Thus, we obtain
\[
\frac{1}{\sigma_k}(Z^+_{\sigma_k} - Z^{k})W_k^{1/2} \to 0.
\]
Let now \(Z'\) be any accumulation point of \(Z_{k}\) and \(\sigma'>0\) an accumulation point of \(\sigma_k\) along the same subsequence and \(W' = (M')^{-1}\) for invertible \(M' = \gamma I + (1-\gamma) (Z')^TZ'\).
 Along this subsequence we have \(Z^+_{\sigma_k} - Z^{k} \to 0\) due to uniform invertibility of all \(W_k\) and \(\sigma_k\) and thus
\[
  Z^+_{\sigma_k} - Z_{k}
  = \prox_{W_{k},\sigma_k}\left(Z_{k} - \sigma_k G_{k}\right) - Z_k
  \to \prox_{W',\sigma'}\left(Z_{k} - \sigma_k G'\right) - Z' = 0,
\]
where \(G' = (Z'\Lambda'+(1/\alpha)A^T(AZ'-Y) )M'\)
by continuity of all expressions (using Proposition~\ref{prop:W_perturb} and Proposition~\ref{prop:rank_one_fro}).
Thus, with Remark~\ref{rem:prox_equiv_cond} the stationarity conditions are fulfilled.
\end{proof}

\begin{remark}
  According to Proposition~\ref{prop:ls_Lipschitz}, the predicted decrease provides both an upper and a lower bound on the decrease after each successful iteration.
  Scaling it by the stepsize to avoid bias (due to small steps), an appropriate stopping criterion is given by
  \begin{equation}
    \label{eq:stopping_prox}
    - \frac{1}{\sigma_k}\operatorname{pred}_{\sigma_k}(Z_{k}) \leq r_{\mathrm{TOL}} J_k, 
  \end{equation}
  where \(r_{\mathrm{TOL}} < 1\) is a relative tolerance, and \(J_k\)
  some reference value of the functional, e.g., \(J_k = J(Z_k)\).
  Moreover, due to~\eqref{eg:prox_pred_ineg} this provides an estimate on the weighted Prox-residual \(r^k_{\prox} = \sigma_k^{-1}(Z^+_{\sigma_k} - Z^{k})W_k^{1/2}\), which characterizes stationarity.
\end{remark}

\subsection{Adaptive choice of the regularization parameter}
\label{sec:adapt_alpha}
In practice, the value \(\alpha\) has to be chosen appropriately, to balance a good fit of the data with the desire for robustness to noise and sparsity.
This is studied extensively in regularization theory for inverse problems; see, e.g.~\cite{Engl2015}.
In this manuscript, we rely on the (Morozov) discrepancy principle, which presumes that a noise level \(\delta > 0\) is known, and \(\alpha\) is adjusted until an (approximate) solution \(Z_\alpha\) of~\eqref{eq:gamma_problem_constr} fulfills
\begin{equation}
  \label{eq:discrepancy}
  \tau_1\, \delta
  \leq \norm{AZ_\alpha-Y}_\fro
  \leq \tau_2\, \delta
\end{equation}
for some \(0< \tau_1 \leq 1 < \tau_2\).
To achieve this, we use the following procedure:
\begin{enumerate}
\item Solve~\eqref{eq:gamma_problem_constr} by several steps of Algorithm~\ref{alg:practical}
  for given fixed $\alpha$ until~\eqref{eq:stopping_prox} is fulfilled.
\item If~\eqref{eq:discrepancy} is violated, update $\alpha$
  (decrease if the fit is too large, increase otherwise) and go back to step~1.
\end{enumerate}
Let us compare this to the constrained formulation~\eqref{prob:nonconvex_constr}:
First, we take into account that the inequality constraint \(\norm{AZ-Y}_\fro \leq \delta\) is generally fulfilled with equality at optimal solutions, since getting a better fit usually requires increasing the regularization term.
This is indeed the case for \(\gamma > 0\), but not necessarily for \(\gamma = 0\),
and we will discuss this further below.
Second, \(1/(2\alpha)\) can be interpreted as a Lagrange multiplier for this constraint.
Thus, the difference of~\eqref{eq:discrepancy} consists in a fuzzy version of the hard constraint
\(\norm{AZ-Y}_\fro = \delta\), which is known not to affect recovery guarantees for convex problems
since usually \(\delta\) is not known exactly.
For efficiency purposes, experience from convex regularization suggests to initialize \(\alpha\) large (to obtain a highly sparse approximation initially), and only decrease it if~\eqref{eq:discrepancy} is violated, and we employ this as well.
We leave a detailed analysis and algorithmic refinements to future work.

\subsubsection{Discrepancy principle for the \(\owl\) functional}

The choice \(\gamma = 0\) does not always fit to the discrepancy principle~\eqref{eq:discrepancy}, and in some cases we can only enforce the upper bound.
This is due to the fact that unless \(\gamma > 0\), a change in the variable \(Z\) that is allowed to increase the discrepancy does not necessarily allow to reduce the regularization term \(\Psi_\gamma\) by moving the reconstruction closer to zero.
Note that
\[
  \Psi_\gamma(\tau Z) \leq \Psi_\gamma(Z)
  \quad\text{for } 0 \leq \tau < 1
\]
with equality for \(\gamma = 0\) and strict inequality for \(\gamma > 0\) and \(Z\neq 0\).
The latter property allows to directly show that any solution of~\eqref{prob:nonconvex_constr} will always fulfill \(\norm{AZ-Y}_\fro = \delta\) for \(\gamma > 0\).
  
To illustrate the issue for \(\gamma = 0\) in more detail, consider the case of matrices with \(\rank(Z) = \ellzero(Z) = K\), and the optimality conditions from Theorem~\ref{thm:opt_cond_model}.
In that case, for the set of vectors \(z_n\) with \(z_n \neq 0\) the vectors \(u_n = W^{-1/2} z_n = (Z^TZ)^{-1/2} z_n\) form an orthogonal basis of \(\R^K\) and
\[
  \frac{1}{\norm{z_n}_{W}} W
  - \sum_{i\colon z_i \neq 0} \frac{1}{\norm{z_i}_{W}} W z_i z^\top_i W
  =  W^{1/2} \left(\frac{1}{\norm{u_n}_2} I - \sum_{i\colon z_i \neq 0} \frac{1}{\norm{u_i}_{2}} u_i u^\top_i\right) W^{1/2}
  = 0.
\]
Thus, by the first condition in Theorem~\ref{thm:opt_cond_model}, any optimal solution of~\eqref{prob:nonconvex_reg} fulfills \((A^T(A \bar{Z} - Y))_n = 0\) for all \(n\) with \(z_n \neq 0\).
For \(Z\) restricted to its sparsity pattern \(I(Z) = \{\,i\;|\; z_i \neq 0\,\}\), \(\abs{I(Z)} = K\), such that \(Z = P_{I(Z)} Z'\) with \(Z' \in \R^{K\times K}\) of full rank and \(P_{I(Z)} \in \{0,1\}^{N\times K}\) a selection of columns of the identity matrix, we can suppose that \(P_{I(Z)}^T A^T A P_{I(Z)}\) is invertible (which is necessary for recovery).
The set of equations derived above can be written as \(P_{I(Z)}^TA^T(A P_{I(Z)} Z' - Y) = 0\), which can always be uniquely solved.

In particular, consider a situation with \(K\)-sparse full rank {\(X = P_{I(X)}X'\)} and noisy data with \(\delta_X = \norm{AX-Y}\).
Now, consider that part of the noise can be fitted on the same support as \(X\), and that in general there exists a
\(K\)-sparse full rank \(Z'\) such that {\(\delta' = \norm{AP_{I(X)}Z'-Y}_\fro < \delta_X\).}
In fact, let \(Z'\) be the minimizer of the discrepancy \(\delta'\) on the sparsity pattern of \(X\).
Note that this minimizer exactly fulfills  {\(P_{I(X)}^TA^T(A P_{I(X)} Z' - Y) = 0\).}
Then,  {\(\bar{Z} = P_{I(X)}Z'\)} clearly is a solution of~\eqref{prob:nonconvex_constr} for all \(\delta > \delta'\).
Let \(\alpha'\) be the maximum of  {\(\norm{(A^T (A P_{I(X)} Z' - Y))_n}_{Z^TZ}\)} over all \(n = 1,2\ldots,N\).
Clearly, for all \(\alpha \geq \alpha'\), the  {\(\bar{Z} = P_{I(X)}Z'\)} also fulfills the optimality conditions from Theorem~\ref{thm:opt_cond_model}.
In this situation, Algorithm~\ref{alg:practical}, which exclusively operates on rank-\(K\) matrices for \(\gamma = 0\), will tend to converge towards  {\(\bar{Z} = P_{I(X)}Z'\)}, regardless of the value of \(\alpha \geq \alpha'\), since the only way to decrease the regularization term below \(\Psi(\bar{Z}) = K\) is to decrease the rank.
Here, if \(\delta' \ll \delta_X\), the lower bound in condition~\eqref{eq:discrepancy} can not be satisfied by increasing \(\alpha\). In fact, such a situation always occurs for \(M = K\), where  {\(A P_{I(X)} Z' = Y\)} can be uniquely solved for \(Z'\) and \(\delta' = \alpha' = 0\).

To guard against this situation, we add an additional rule that allows termination of the algorithm if the upper bound in~\eqref{eq:discrepancy} is valid, \(\alpha\) has been previously increased, and no change in \(Z_\alpha\) has occurred.

\subsection{Continuation for relaxation parameter}
In the previous parts of this section \(\gamma\) was considered fixed.
Applying the described algorithms directly for \(\gamma=0\), and solving the original problem~\eqref{prob:nonconvex_reg} has several down-sides:
\begin{itemize}
\item
  Due to the high degree of nonconvexity of the regularizer, the solution will depend on initialization.
  This, aside from causing problems with identifying a global minimum, also requires the initial point to be of full rank, which is not efficient when \(K\) is large).
\item
  If the optimal solution has \(\rank{\bar{Z}} < K\), the method will not be able to identify it for \(\gamma = 0\), since Algorithm~\ref{alg:practical} produces full rank iterates only and the \(\owl\)-functional is discontinuous across different ranks.
  This has to be remedied by guessing the appropriate rank ahead of time, and reducing \(K\) to this rank by a data reduction as discussed at the beginning of section~\ref{sec:approx}.
\end{itemize}

In this section, we briefly sketch a strategy that starts with \(\gamma=1\) and then successively reduces \(\gamma\).
Here, where we are initially solving the convex problem with \(\owl = \Psi_0\) replaced by \(\ellone = \Psi_1\) and then following the path of solutions \(\bar{Z}_{\alpha,\gamma}\) for \(\Psi_\gamma\) to finally obtain \(\bar{Z}_{\alpha_\delta,0}\) with \(\alpha_\delta\) fulfilling~\eqref{eq:discrepancy}.

As a proof of concept, we employ a simple strategy, employing a fixed sequence of values \(\gamma_l = \tilde{\gamma}^l\) for some \(0 < \tilde{\gamma} < 1\) and \(l=0,\ldots,l_{\max}\).
We start with \(l=0\) and use the method from section~\ref{sec:adapt_alpha} to solve the convex problem and the discrepancy principle.
Then, we increment \(l\), and continue iterating Algorithm~\ref{alg:practical} until the termination conditions from the previous section are fulfilled again.
We leave refinements of this naive approach (cf., e.g., \cite{allgower2012numerical}) to future work.

\section{Experiments}\label{sec:experiments}

\subsection{Synthetic data}

We evaluate the performance of our method against other recent MMV techniques, including  subspace-augmented MUSIC (SA-MUSIC) \cite{6158602}, simultaneous normalized iterative hard thresholding (SNIHT) \cite{6719509}, and rank-aware orthogonal recursive matching pursuit (RA-ORMP) \cite{davies2012rank}. The codes of these baseline methods were obtained from the repository \href{https://github.com/AdriBesson/joint_sparse_algorithms}{https://github.com/AdriBesson/joint\_sparse\_algorithms}. 
We also include in our evaluation the \texttt{spg\_mmv} method, which solves the standard $\ell_{2,1}$ problem in constrained form and whose codes are acquired from SPGL1 package \cite{van2009probing, doi:10.1137/100785028}. Our codes and data for reproducing $\owl$ results are available at  \href{https://github.com/a-petr/OWL}{https://github.com/a-petr/OWL}.

We use similar experiment setting as those in \cite{8531673}. We consider a Gaussian random measurement matrix $ A\in \mathbb{R}^{M\times N}$, with ${A}_{ij} \sim \mathcal{N}(0,1/\sqrt{M})$ and $N$ being fixed to $128$. The signal matrix ${X} \in \mathbb{R}^{N\times K}$ is built as a random matrix, with $K = s = 30$, where $s$ is the row sparsity. We conduct $40$ random trials of the algorithms for each experiment and compute the rate of successful support recovery. We compare our method with their counterpart in two tests: fixed rank ($r = 10$) for a number of measurements ranging between $30$ and $90$ and fixed number of measurements ($M = 51$) for a rank varying between $1$ and $30$.

We first consider the noiseless scenario, i.e., observation $ Y =  A  X$. In Figure~\ref{fig:SR_noiseless}, our proposed approach is second to RA-ORMP in recovery probability  in both fixed number of measurements (left) and fixed rank (right) tests, followed by SA-MUSIC and SNIHT. The experiment shows that these four methods are rank-aware, because increasing the rank of the signal matrix leads to an improvement of the recovery rate, while $\ell_{2,1}$ regularization with \texttt{spg\_mmv} solver does not benefit from the change of the rank. For $\owl$ regularization, the recovery rate grows fast when the rank of signal matrix is larger than 3 and gets to $100\%$ at rank $12$.  SA-MUSIC and SNIHT struggle to identify the correct support of the solutions, but when they are able to do so, they achieve a reconstruction error smaller than that of $\owl$ (Figure~\ref{fig:error_noiseless}).

\begin{figure}[h!]
    \centering
        \includegraphics[width=0.45\textwidth]{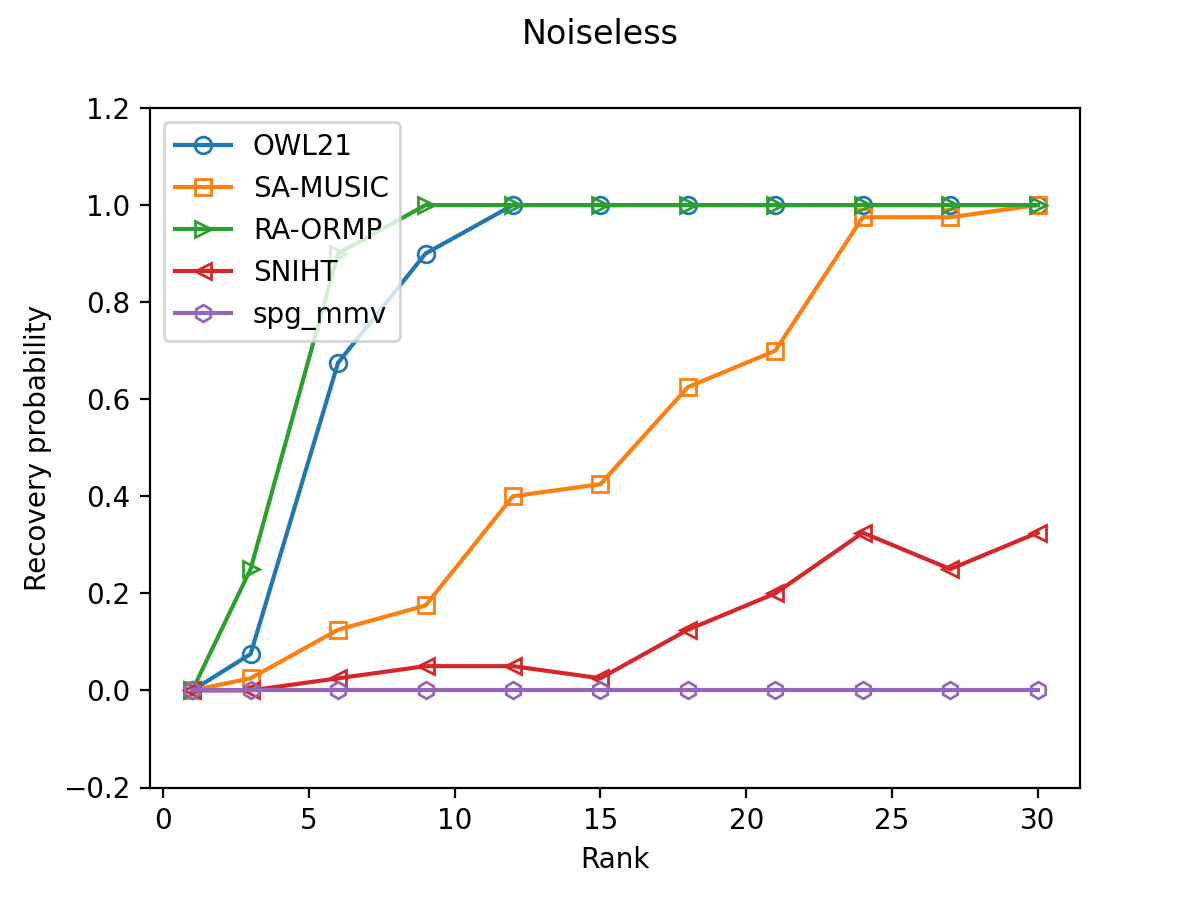}
        \includegraphics[width=0.45\textwidth]{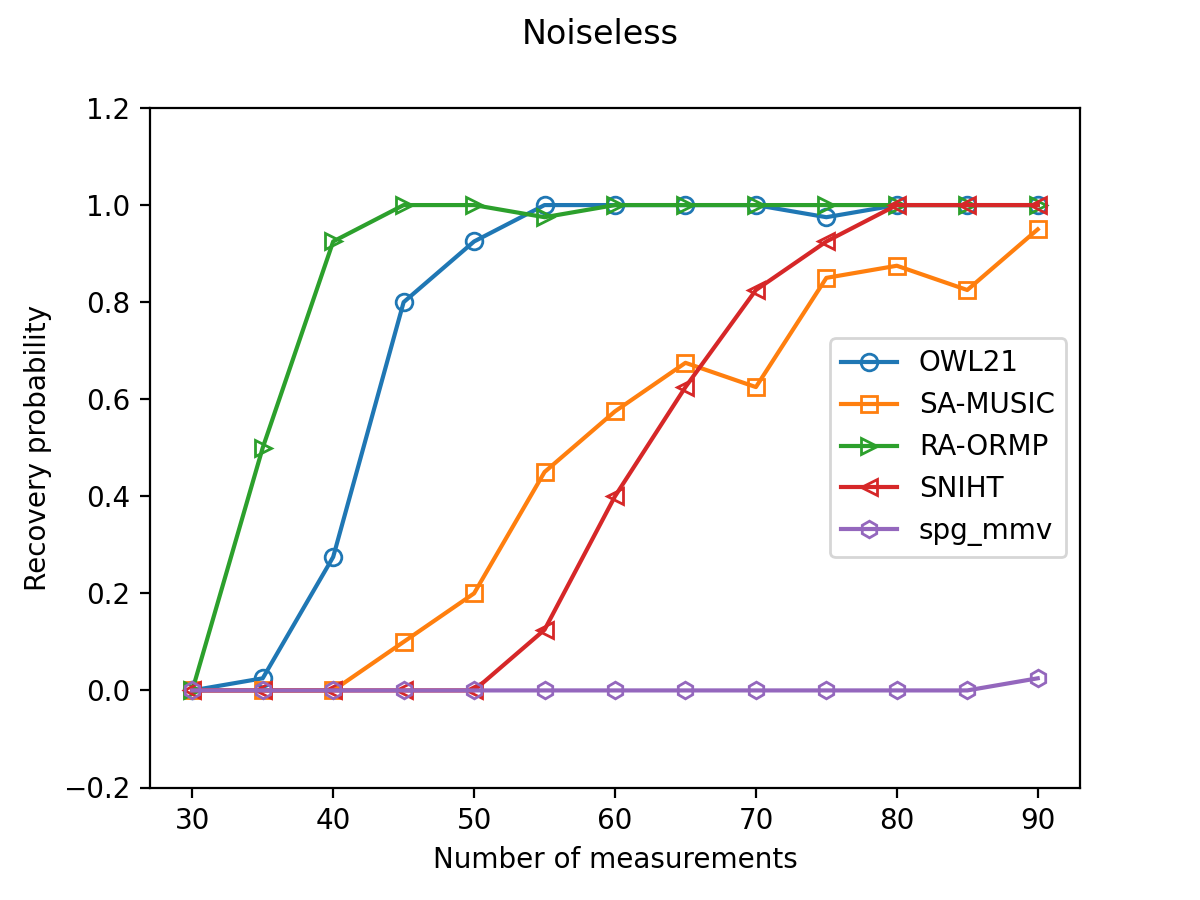}
        \caption{Recovery probability of our approach in comparison with other joint sparse recovery techniques for varying matrix rank (left) and varying number of measurements (right) for noiseless reconstruction.}
        \label{fig:SR_noiseless}
\end{figure}

\begin{figure}[h!]
    \centering
        \includegraphics[width=0.45\textwidth]{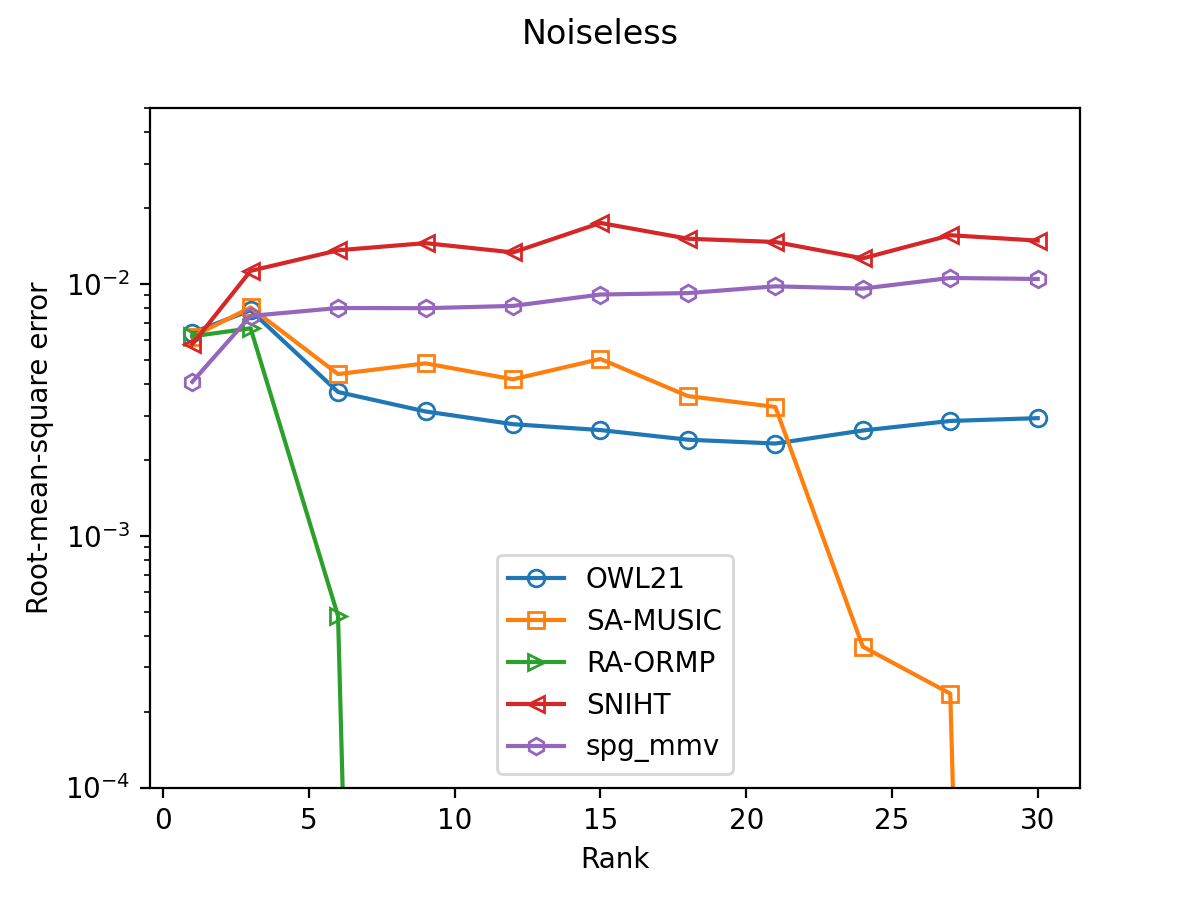}
        \includegraphics[width=0.45\textwidth]{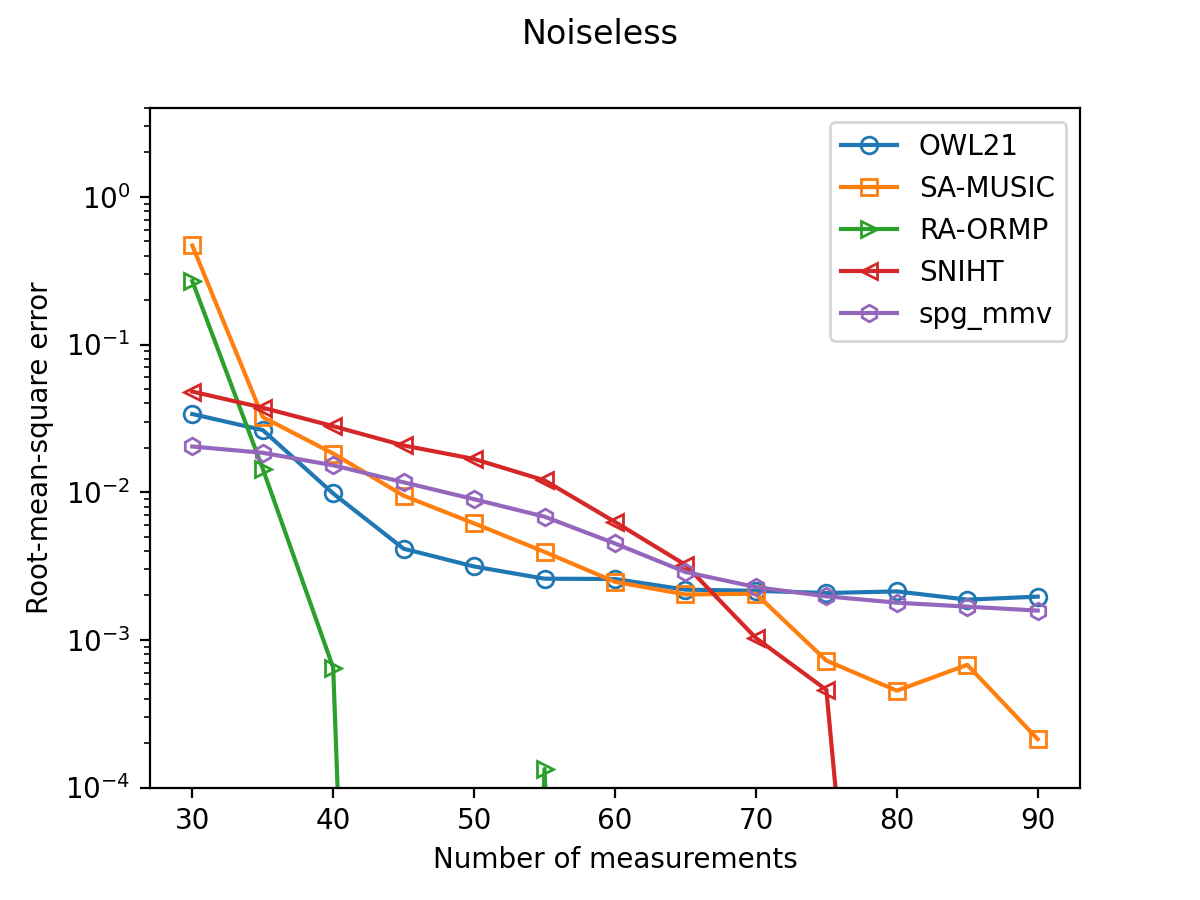}
        \caption{Root-mean-square error of our approach in comparison with other joint sparse recovery techniques for varying matrix rank (left) and varying number of measurements (right) for noiseless reconstruction.}
        \label{fig:error_noiseless}
\end{figure}

Next, we consider a more practical scenario where the observation $Y$ contains some measurement noise, i.e., $ Y =  A  X + \eta$. We assume $ \eta$ is a normal distributed random variable with standard deviation ${0.1}/{\sqrt{MK}}$, so that the expectation of $\| \eta\|$ is $0.1$. In Figure~\ref{fig:SR_noisy}, we observe that $\owl$ achieves best performance in recovery rate in both fixed number of measurements (left) and fixed rank (right) tests. Our method has $100\%$ recovery rate at signal rank $18$, while SA-MUSIC only achieves this at full signal rank and SNIHT cannot get a recovery rate higher than $40\%$. Interestingly, the best overall baseline in noiseless scenario, RA-ORMP, suffers heavily from the presence of the noise. With fixed rank and varied number of measurements, $\owl$ also outperforms all others by a substantial margin. Our method has the best performance in root-mean-square error in the small rank and small number of measurements scenarios, where its recovery rate is superior (Figure~\ref{fig:error_noisy}). However, when the other methods catch up and can approximate the signal support well, they provide better reconstruction error, similarly to what we have seen in the noiseless case.

To conclude, the numerical tests highlight that $\owl$ can efficiently exploit the rank of the signal matrix and achieve a very competitive recovery rate, with mild requirements on the number of measurements and signal rank. They also reveal that the successful rate and signal error metrics may not always be in alignment, thus, one should consider the recovery goals, measurement budget, and signal noise to select a suitable method. Our approach excels in identifying the exact support of the row-sparse signals, and has the top performance when the signal matrices have low-rank, making it a strong choice when those are the priority. 

\begin{figure}[h!]
    \centering
        \includegraphics[width=0.45\textwidth]{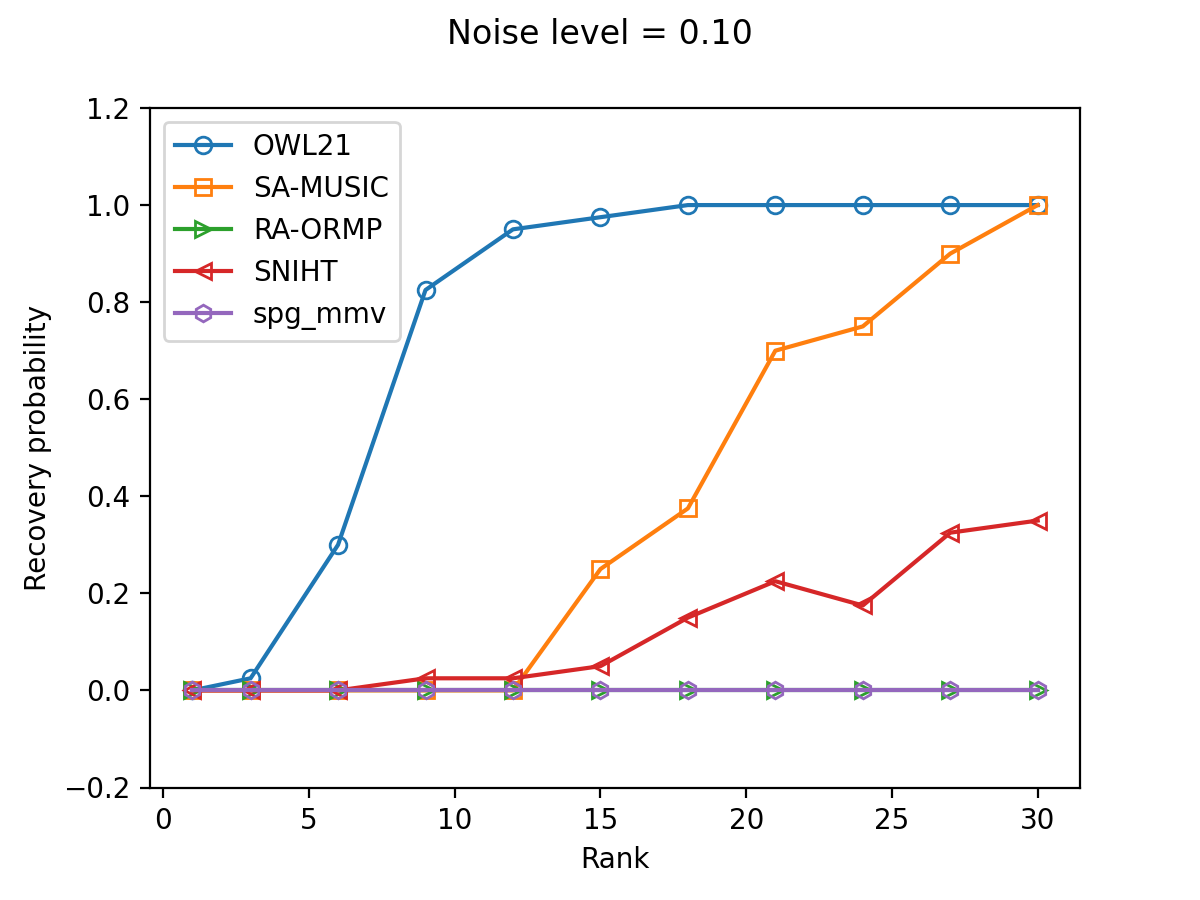}
        \includegraphics[width=0.45\textwidth]{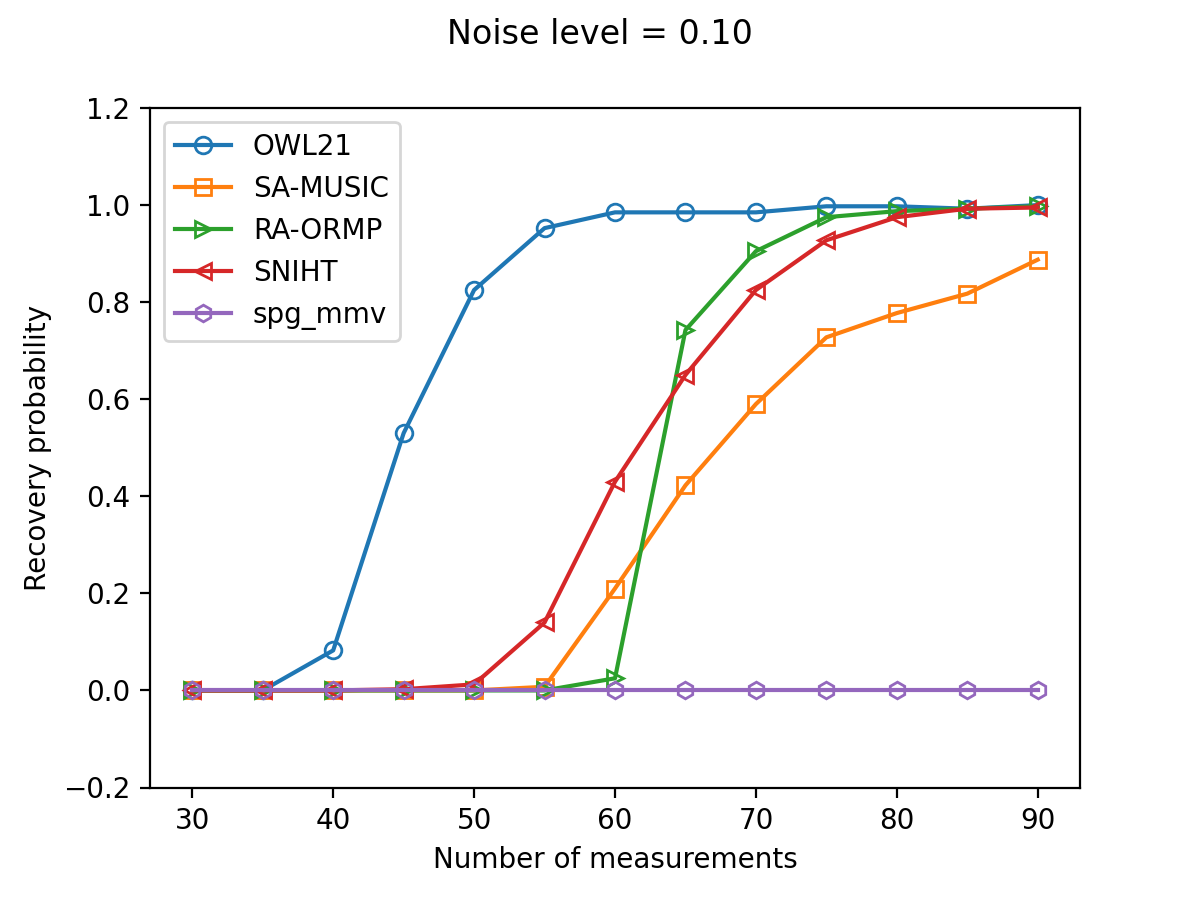}
        \caption{Recovery probability of our approach in comparison with other joint sparse recovery techniques for varying matrix rank (left) and varying number of measurements (right) for noisy reconstruction.}
        \label{fig:SR_noisy}
\end{figure}

\begin{figure}[h!]
    \centering
        \includegraphics[width=0.45\textwidth]{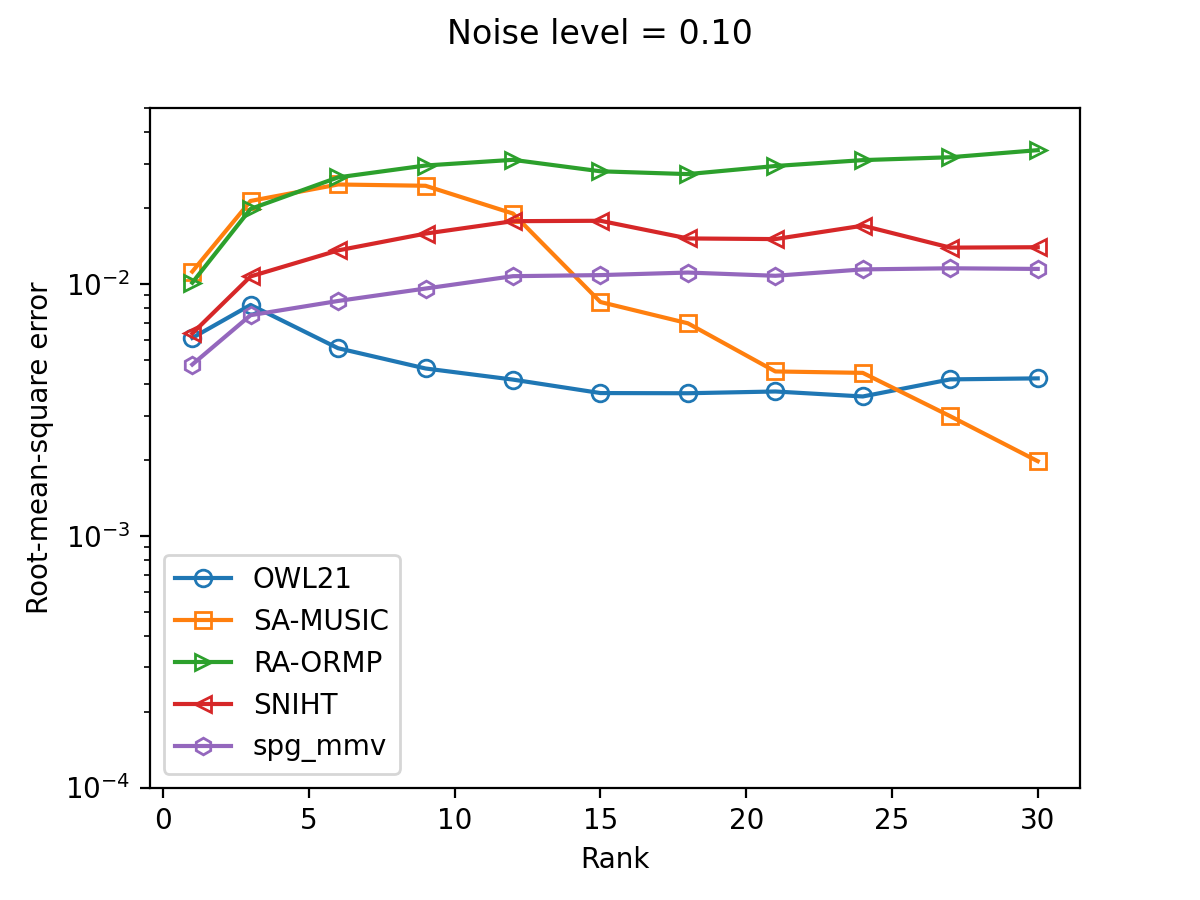}
        \includegraphics[width=0.45\textwidth]{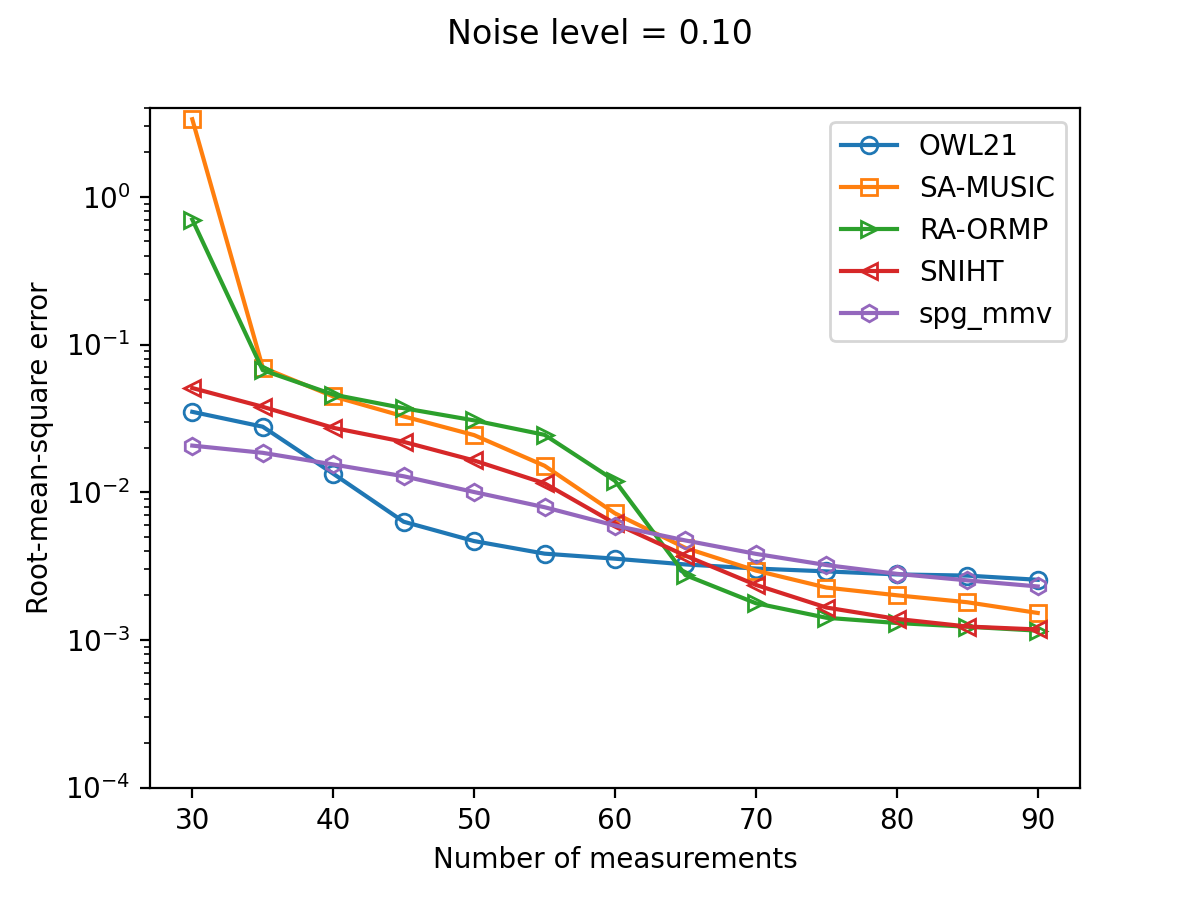}
        \caption{Root-mean-square error of our approach in comparison with other joint sparse recovery techniques for varying matrix rank (left) and varying number of measurements (right) for noisy reconstruction.}
        \label{fig:error_noisy}
\end{figure}

\subsection{Feature selection on biological data.}
\label{sec:feature_selection}
We evaluate the performance of our method on the feature selection problem for two publicly available biomedical informatics data sets. The first data set WISCONSIN is a collection of data about breast cancer tumors that were diagnosed as either malignant or benign \cite{Bennett1992RobustLP}. The data set contains 714 samples, with each sample representing a breast cancer tumor. Each tumor is described by 30 features, including information about the size, shape, and texture of the tumor, etc. These features were computed from a digitized image of a fine needle aspirate of a breast mass. The second data set LUNG\_DISCRETE is a collection of microarray data about lung carcinomas, containing $147$ samples in $7$ classes where each sample consists of 325 gene expressions \cite{1453511}. In the feature selection problem, we aim to find an optimal subset of features (gene expressions) that can accurately represent the whole data set. This problem is a special instance of the MMV problem, where the observation matrix $Y$ is identical with the measurement matrix $A$ (see Section~\ref{sec:related}). Here, we apply $\owl$ and compare it with other MMV techniques (SA-MUSIC, RA-ORMP and \texttt{spg\_mmv}) in reconstructing the full data set, given different allowable number of features.  SNIHT does not produce competitive results for this test, so will be omitted from the comparison. For SA-MUSIC and RA-ORMP, the allowable number of features is enforced directly, while for the two regularization-based methods, we set an error tolerance a priori and solve for the smallest set of features that can reconstruct the full data set within this tolerance. We rank the feature importance via the $\ell_2$ norm of the respective row of the solution matrix and further prune out the spurious features whose importance scores are very small, in particular, less than $10^{-6}\times\text{(maximum importance score)}$. 
Finally, after the features are selected, we reconstruct the full data from this set by solving the regular least square problem, and report the root-mean-square error of the approximated and ground truth data.

Note that the feature selection problem has $K=N$ thus often involves a large $K$. To accelerate the $K\times K$ matrix inversion step in our algorithm, we employ an SVD technique to decompose \(A = P\Sigma Q\) and solve the reduced problem 
\begin{align*}
\min_{Z\in \R^{N\times K'} } \left(\owl(Z)+\frac{1}{2\alpha} \|AZ-A'\|_\fro^2\right).
\end{align*} 
Here, $A' = P\Sigma' \in \R^{M\times K'}$ is a low rank approximation of $P\Sigma$, formed by dropping the columns of $\Sigma$ corresponding to singular values that are below a specified threshold. The solution now has dimension $N\times K'$, and our algorithm involves the matrix inversion of size $K'\times K'$ instead, which is much faster if \(K' \ll K\). In the LUNG\_DISCRETE problem, even though $A$ has dimension ${147\times 325}$ (i.e., $K=325$), its effective rank is approximately $72$ (there are only 72 singular values of $A$ exceeding $10^{-12}$), thus $K' =72$. We emphasize that this is only the most conservative choice, and a more aggressive thresholding strategy is possible. In fact, here we further reduce $K'$ by linking the singular value cutoff with the predefined reconstruction error tolerance, as the low rank approximation $A'$ can afford an error of the same scale as this tolerance. The final row-sparse solution is easily obtained by right multiplying the reduced solution with $Q$. It is worth noting that the above strategy is general for feature selection problems because the data matrices of these problems are inherently low-rank.

For the WISCONSIN data set (Figure~\ref{fig:error_bio}, left), we observe that $\owl$ performs much better than the competitors in identifying the representative features. The reconstruction error of $\owl$ drops rapidly when the number of features is small and is approximately $20\%$ lower than those of the next baselines. By contrast, the other methods generally require five more features to capture the data set as good as $\owl$. The performance of $\owl$ and SA-MUSIC eventually converge around 25 features, where the reconstruction error diminishes to $0$. This verifies that  the feature set contains some degree of redundancy, and that the data set can still be accurately represented with some features being removed. For the LUNG\_DISCRETE data set (Figure~\ref{fig:error_bio}, right), the performances of $\owl$, RA-ORMP and SA-MUSIC are close and better than \texttt{spg-mmv} with small number of features. However, all the tested methods can select roughly $72$ optimal features that fully represent the data, despite the extensive feature set comprising 325 gene expressions. 

\begin{figure}[h!]
    \centering
        \includegraphics[width=0.45\textwidth]{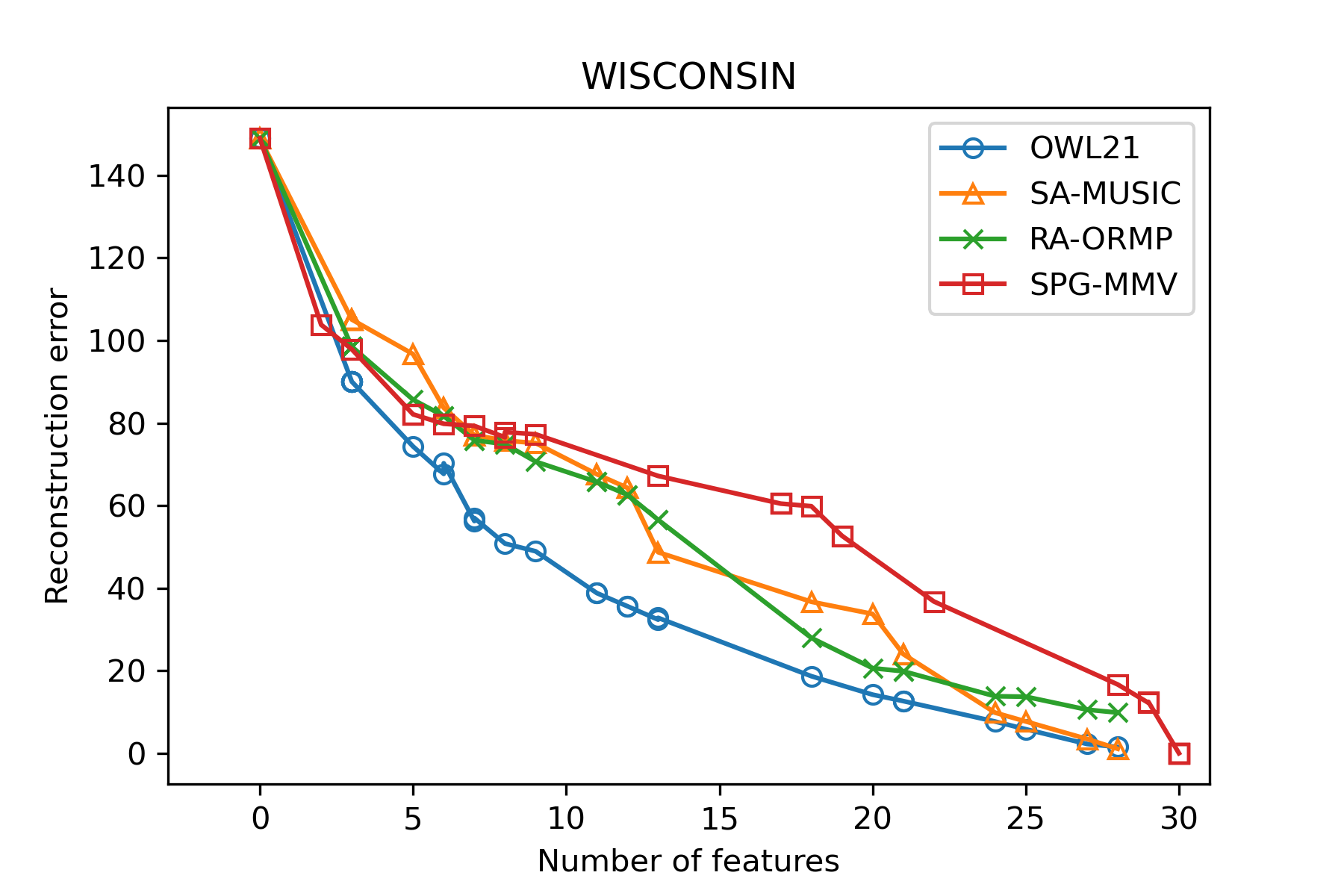}
        \includegraphics[width=0.45\textwidth]{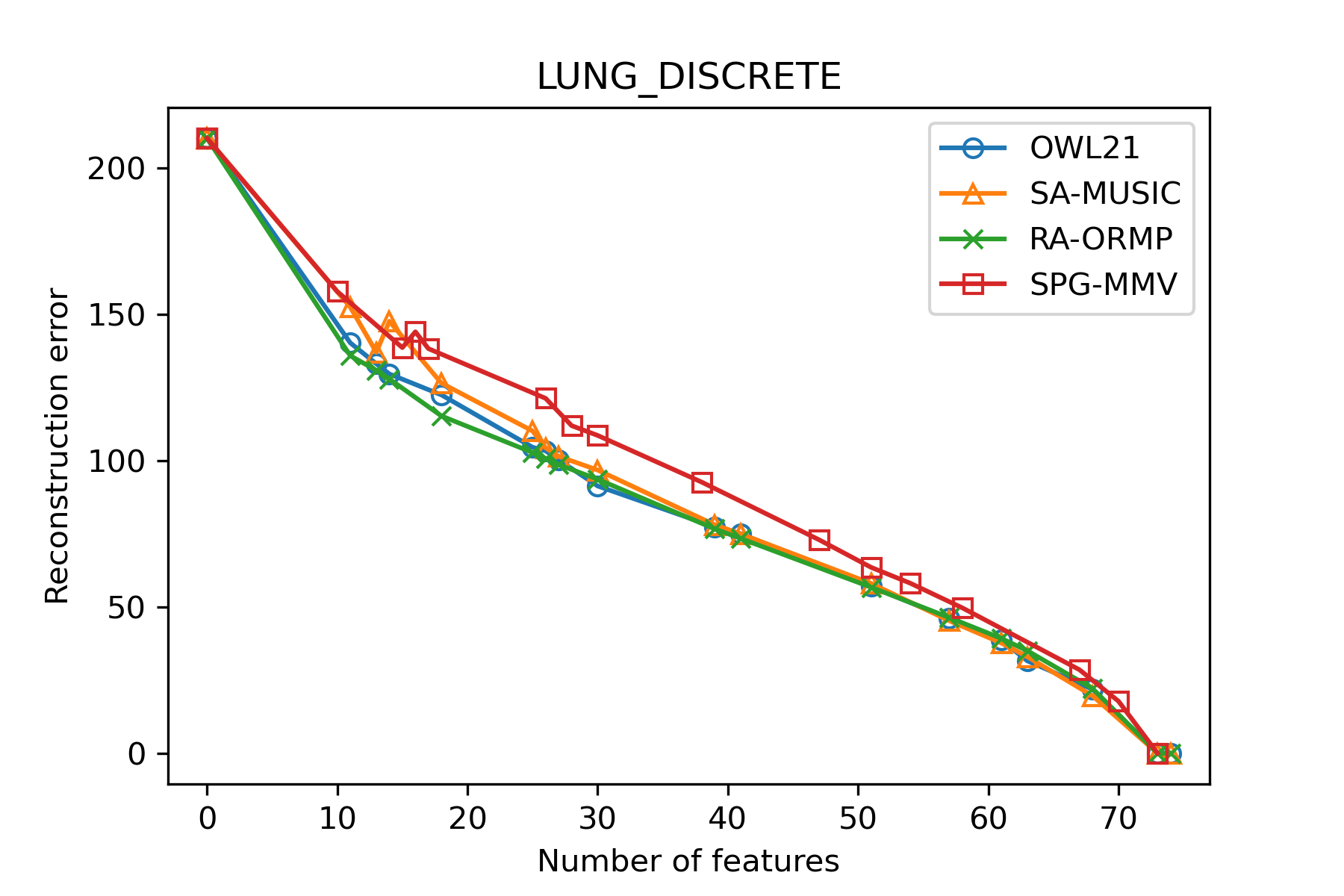}
        \caption{Approximation of the full WISCONSIN (left) and LUNG\_DISCRETE datasets (right) from a limited number of features, identified by $\owl$ as opposed to other methods, when a varying number of features is selected.}
        \label{fig:error_bio}
\end{figure}

\bibliography{refs.bib}
\bibliographystyle{siam}

\newpage
\appendix

\section{Properties of the loss function}\label{sec:rankaware}

\begin{lemma}
Suppose $Z\in S(N,K,r)$ and $Z=U\Sigma V^T$ is the compact singular value decomposition of $Z$, then  
\[\owl(Z)=\|U\|_{2,1}\]
\end{lemma}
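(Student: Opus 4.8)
The plan is to reduce the claim to the identity $Z(Z^TZ)^{\dagger/2} = UV^T$ and then exploit the orthonormality of the columns of $V$. First I would compute, using $U^TU = I_r$, that
\[
Z^TZ = V\Sigma U^T U \Sigma V^T = V\Sigma^2 V^T .
\]
Since $V\in\R^{K\times r}$ has orthonormal columns and $\Sigma^2\in\R^{r\times r}$ is diagonal and positive definite, this is precisely a reduced eigendecomposition of the symmetric positive semidefinite matrix $Z^TZ$; hence its Moore--Penrose pseudo-inverse inverts the nonzero eigenvalues, so $(Z^TZ)^\dagger = V\Sigma^{-2}V^T$ and therefore $(Z^TZ)^{\dagger/2} = V\Sigma^{-1}V^T$. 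If one prefers to avoid invoking the spectral characterization, the same formula follows by directly checking the four defining Moore--Penrose conditions for $V\Sigma^{-2}V^T$, again using only $V^TV = I_r$.

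Next I would substitute this into the definition of the functional:
\[
Z(Z^TZ)^{\dagger/2} = U\Sigma V^T\cdot V\Sigma^{-1}V^T = U\Sigma (V^TV)\Sigma^{-1}V^T = U\Sigma\Sigma^{-1}V^T = UV^T ,
\]
where $V^TV = I_r$ is used once more. Thus $\owl(Z) = \ellone\big(Z(Z^TZ)^{\dagger/2}\big) = \ellone(UV^T) = \|UV^T\|_{2,1}$.

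Finally, to pass from $\|UV^T\|_{2,1}$ to $\|U\|_{2,1}$, I would write the $n$-th row of $UV^T$ as $u_nV^T$, where $u_n\in\R^{1\times r}$ is the $n$-th row of $U$, and compute its squared Euclidean norm as $(u_nV^T)(u_nV^T)^T = u_n V^TV u_n^T = u_n u_n^T = \|u_n\|_2^2$, yet again by $V^TV = I_r$. Summing over $n = 1,\dots,N$ gives $\|UV^T\|_{2,1} = \sum_{n=1}^N \|u_n\|_2 = \|U\|_{2,1}$, which is the claim. There is no genuine obstacle here; the only points requiring a little care are the use of the \emph{compact} SVD (so that $V^TV = I_r$ while $VV^T$ need not be the identity when $r < K$) and the identification of the pseudo-inverse of $Z^TZ$, for which the reduced eigendecomposition above is the cleanest route.
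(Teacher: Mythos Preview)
Your proof is correct and follows essentially the same route as the paper: compute $(Z^TZ)^{\dagger/2} = V\Sigma^{-1}V^T$, deduce $Z(Z^TZ)^{\dagger/2} = UV^T$, and use $V^TV = I_r$ to conclude $\|UV^T\|_{2,1} = \|U\|_{2,1}$. The paper merely states these identities without justification, whereas you supply the details (the reduced eigendecomposition for the pseudo-inverse and the row-norm computation), but the argument is the same.
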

\begin{proof}
We have
$(Z^TZ)^{\dagger/2}=V\Sigma^{-1}V^T$,
thus 
$Z(Z^TZ)^{\dagger/2}=UV^T.$ Observe that 
\[
\|Z(Z^TZ)^{\dagger/2}\|_{2,1} = \|UV^T\|_{2,1}=\|U\|_{2,1}.
\qedhere
\]
\end{proof}

\begin{lemma}\label{lem:Psibound}

Let $Z\in \R^{N\times K}$, then \[\rank(Z)\leq \owl(Z)\leq \sqrt{\rank(Z)\cdot\|Z\|_{2,0}}.\] Moreover, we have equality on the left-hand side if and only if $Z$ is $\rank(Z)$-row sparse. 
\end{lemma}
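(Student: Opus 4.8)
The plan is to pass to the compact singular value decomposition $Z = U\Sigma V^{T}$, with $U\in\R^{N\times r}$, $U^{T}U = I_{r}$, $\Sigma\in\R^{r\times r}$ diagonal and s.p.d., $V\in\R^{K\times r}$, $V^{T}V = I_{r}$, and $r=\rank(Z)$, and to reduce every assertion to a statement about the Euclidean row norms $\|u_{n}\|_{2}$ of $U$. By the preceding lemma, $\owl(Z) = \|U\|_{2,1} = \sum_{n=1}^{N}\|u_{n}\|_{2}$. First I would record two elementary facts. Taking the trace in $U^{T}U = \sum_{n=1}^{N}u_{n}u_{n}^{T} = I_{r}$ gives the identity
\[
  \sum_{n=1}^{N}\|u_{n}\|_{2}^{2} \;=\; r \;=\; \rank(Z).
\]
Moreover, since each $u_{n}u_{n}^{T}\succeq 0$ and they sum to $I_{r}$, we have $u_{n}u_{n}^{T}\preceq I_{r}$, hence $\|u_{n}\|_{2}^{2} = \|u_{n}u_{n}^{T}\|_{\op}\le 1$ for every $n$ (equivalently, $\|u_{n}\|_{2}^{2}$ is the $n$-th diagonal entry of the orthogonal projection $UU^{T}$). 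Finally, $\|U\|_{2,0} = \|Z\|_{2,0}$, because $\Sigma V^{T}$ has full row rank, so $u_{n}^{T}(\Sigma V^{T}) = 0$ forces $u_{n} = 0$.

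The upper bound is then Cauchy--Schwarz over the nonzero rows:
\[
  \owl(Z) = \sum_{n\colon u_{n}\neq 0}\|u_{n}\|_{2}
  \;\le\; \|U\|_{2,0}^{1/2}\Bigl(\sum_{n=1}^{N}\|u_{n}\|_{2}^{2}\Bigr)^{1/2}
  \;=\; \sqrt{\|Z\|_{2,0}\cdot\rank(Z)}.
\]
For the lower bound, the key observation is that $0\le\|u_{n}\|_{2}\le 1$ forces $\|u_{n}\|_{2}\ge\|u_{n}\|_{2}^{2}$, so summing and invoking the trace identity,
\[
  \owl(Z) \;=\; \sum_{n=1}^{N}\|u_{n}\|_{2} \;\ge\; \sum_{n=1}^{N}\|u_{n}\|_{2}^{2} \;=\; \rank(Z).
\]
Equality holds exactly when $\|u_{n}\|_{2}(1-\|u_{n}\|_{2}) = 0$ for all $n$, i.e.\ when every row of $U$ has Euclidean norm in $\{0,1\}$. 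If that is the case, the trace identity shows $U$ has exactly $r$ nonzero rows, so $\|Z\|_{2,0} = \|U\|_{2,0} = r = \rank(Z)$ and $Z$ is $\rank(Z)$-row sparse; conversely, if $Z$ (hence $U$) is $\rank(Z)$-row sparse, then $\|Z\|_{2,0} = \rank(Z) = r$, and deleting the zero rows of $U$ leaves an $r\times r$ matrix with orthonormal columns, i.e.\ an orthogonal matrix, whose rows are unit vectors, so every row of $U$ has norm $0$ or $1$.

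I do not anticipate a real obstacle here: once the problem is transported to the row norms of $U$, everything follows from the trace identity $\sum_{n}\|u_{n}\|_{2}^{2} = r$, the pointwise bound $\|u_{n}\|_{2}\le 1$ (which is what makes the lower bound work and is exactly what drives the equality case), and Cauchy--Schwarz for the upper bound. The only points needing a little care are the ingredients of the equality characterization, namely the identity $\|U\|_{2,0} = \|Z\|_{2,0}$ and the fact that an $r\times r$ matrix with orthonormal columns is genuinely orthogonal, so that its rows, and not merely its columns, are unit vectors.
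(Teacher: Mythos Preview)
Your proof is correct and follows essentially the same approach as the paper: reduce to the compact SVD, use the trace identity $\sum_n\|u_n\|_2^2=r$ together with $\|u_n\|_2\le 1$ for the lower bound and its equality case, and Cauchy--Schwarz (the paper phrases it as H\"older) for the upper bound. If anything, your version is more careful, since you explicitly justify $\|u_n\|_2\le 1$, the identity $\|U\|_{2,0}=\|Z\|_{2,0}$, and the converse direction of the equality characterization, all of which the paper leaves implicit.
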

\begin{proof}
Assume $\rank(Z)=r$. Let $U=(u_1,\dots, u_N)^T$ be as in the previous lemma where $u_1,\dots, u_N$ are the rows of $U$. 
Notice that $\|u_1\|_2,\dots, \|u_N\|_2\leq 1$ and 
\[\|U\|^2_{2,2}=\|u_1\|_2^2+\cdots +\|u_N\|_2^2=r\]
therefore 
\[\owl(Z)=\|u_1\|_2+\cdots +\|u_N\|_2\geq r.\]
So $\owl(Z)$ takes its minimum value when $\owl(Z)=r$ and that happens only when for each $u_i$, $\|u_i\|^2_2=\|u_i\|_2$ hence $\|u_i\|_2=0$ or $1$. Consequently, $\owl(Z)$ takes its smallest value for $Z\in \R^{N\times K}$ on the $\rank(Z)$-row sparse matrices. 

To prove the upper bound, note that only $s$ of the    $u_1,\dots, u_N$ are different from $0$, thus from H\"older's inequality 
\[
  \owl(Z)=\|U\|_{2,1}\leq \sqrt{s}\cdot \|U\|_{2,2}=\sqrt{s}\sqrt{r}.
\qedhere
\] 
\end{proof}




\begin{proposition}
\label{cor:phi-lsc}
\(\owl(Z)\) is lower semi-continuous.
\end{proposition}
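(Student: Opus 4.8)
The plan is to exhibit $\owl$ as an increasing pointwise limit — equivalently, a pointwise supremum — of continuous functions, since a supremum of continuous (indeed of lower semi-continuous) functions is automatically lower semi-continuous. For $\epsilon > 0$ introduce the Tikhonov-regularized surrogate
\[
  \Phi_\epsilon(Z) = \sum_{n=1}^N \sqrt{z_n(Z^TZ + \epsilon I)^{-1}z_n^T},
\]
which is well defined on all of $\R^{N\times K}$ because $Z^TZ + \epsilon I$ is positive definite for every $Z$. Since $Z \mapsto Z^TZ + \epsilon I$ is continuous with values in the open set of invertible matrices, matrix inversion is continuous there, each row $z_n$ depends continuously on $Z$, and $t \mapsto \sqrt{t}$ is continuous on $[0,\infty)$, the function $\Phi_\epsilon$ is continuous on $\R^{N\times K}$.

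The key algebraic observation is that, for every $n$, the vector $z_n^T$ (the $n$-th column of $Z^T$) lies in $\operatorname{range}(Z^T) = \operatorname{range}(Z^TZ)$, using $\ker(Z^TZ) = \ker(Z)$. Expanding $z_n^T = \sum_i c_i w_i$ in an orthonormal eigenbasis $\{w_i\}$ of $Z^TZ$ associated with its positive eigenvalues $\{\lambda_i\}$ (the coefficients along the kernel directions vanish by the inclusion above), we obtain
\[
  z_n(Z^TZ+\epsilon I)^{-1}z_n^T = \sum_i \frac{c_i^2}{\lambda_i + \epsilon},
\]
which is nondecreasing as $\epsilon \downarrow 0$ and converges to $\sum_i c_i^2/\lambda_i = z_n(Z^TZ)^\dagger z_n^T$. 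Summing over $n$ and using monotonicity of $\sqrt{\cdot}$, $\Phi_\epsilon(Z)$ increases monotonically to $\owl(Z)$ as $\epsilon \downarrow 0$, hence $\owl = \sup_{\epsilon > 0}\Phi_\epsilon$ pointwise.

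It then remains only to note that for any $t \in \R$ the superlevel set $\{\,Z : \owl(Z) > t\,\} = \bigcup_{\epsilon > 0}\{\,Z : \Phi_\epsilon(Z) > t\,\}$ is a union of open sets — each open by continuity of $\Phi_\epsilon$ — and hence open; equivalently, $\owl$ is lower semi-continuous. The step I would verify most carefully is the inclusion $z_n^T \in \operatorname{range}(Z^TZ)$: this is exactly what makes the regularization harmless, ensuring the monotone limit equals the pseudo-inverse quadratic form rather than diverging along $\ker Z$; everything else is routine. One could alternatively argue sequentially — given $Z_k \to Z$, pass to a subsequence of constant rank $r_k \geq \rank(Z)$ (rank being lower semi-continuous), extract convergent compact SVD factors, and invoke continuity of $\|\cdot\|_{2,1}$ together with Lemma~\ref{lem:Psibound} — but that route requires bookkeeping for SVD factors of varying size, so the supremum-of-continuous-functions argument is cleaner.
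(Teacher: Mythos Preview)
Your argument is correct and takes a genuinely different, more elementary route than the paper. The paper deduces lower semicontinuity as an immediate corollary of the $\Gamma$-convergence result Proposition~\ref{convergence:Psi} (which establishes that $\Psi_\gamma \xrightarrow{\Gamma} \owl$ as $\gamma \to 0$) together with the general fact that $\Gamma$-limits are always lower semicontinuous. Your surrogate $\Phi_\epsilon$ is essentially the paper's relaxation up to reparametrization and a scalar factor --- indeed $(1-\gamma)^{1/2}\Psi_\gamma = \Phi_{\gamma/(1-\gamma)}$ --- but you exploit the monotonicity in $\epsilon$, which hinges on the range inclusion $z_n^T \in \operatorname{range}(Z^TZ)$, to write $\owl$ directly as a pointwise supremum of continuous functions. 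This sidesteps the $\Gamma$-convergence machinery entirely and is self-contained; the paper's route, by contrast, is a one-line consequence of a heavier result that it needs anyway for the convergence of minimizers in Proposition~\ref{prop:limit_minimizers}. Your observation that the rows of $Z$ automatically avoid $\ker(Z^TZ)$ is exactly what makes the Tikhonov limit coincide with the pseudo-inverse quadratic form, and it is the right thing to single out as the crux.
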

\begin{proof}
The proof of this proposition is a direct corollary of Proposition~\ref{convergence:Psi} and the fact that the $\Gamma$-limits are always lower semicontinuous \cite[Proposition 6.8]{dal2012introduction}.
\end{proof}



Let  $Z=U\Sigma V^T$ be  singular value decomposition of  $Z\in \R^{N\times K}$. It can be full, thin or compact version, that does not affect the following notation but we will use the compact SVD unless stated otherwise. 
For a function $w:[0,\infty)\to [0,\infty)$ with $w(0)=0$, define 
\[w(Z):=Uw(\Sigma)V^T\]
where the matrix function is applied element-wise to the matrix $\Sigma$
containing the singular values. This is known in the literature by the ``generalized'' or ``singular value'' matrix calculus (see, e.g., \cite{AnderssonCarlssonPerfekt2015}).

Define the family of functions
\[
w_\gamma(\sigma) = \frac{\sigma}{\sqrt{\gamma + (1-\gamma)\sigma^2}} \quad\text{for }\gamma \in (0,1],
\]
and the limit function for \(\gamma \to 0\) by
\[
w_0(\sigma) = \begin{cases}
0 &\text{for } \sigma = 0, \\
1 &\text{for } \sigma > 0.
\end{cases}
\]
With this notation, we can write
\[
w_0(Z)=Z (Z^T Z)^{\dagger/2},
\quad\text{and}\quad 
w_\gamma(Z)=Z (\gamma I + (1-\gamma)Z^TZ)^{-1/2}
\quad\text{for } 0<\gamma  \leq 1.
\]
Thus, we can also write
\[
\Psi_\gamma(Z) = \norm{w_\gamma(Z)}_{2,1} = \norm{U w_\gamma(\Sigma)}_{2,1}, \quad\text{for } 0\leq \gamma  \leq 1
\]
where the last equality follows from the orthogonality of \(V\).

\begin{proposition}
  \label{prop:relax_basic_props}
\begin{enumerate}
\item \label{relax_basic_props_1}
  \(\Psi_1(Z) = \ellone(Z)\) and \(\Psi_0(Z) = \owl(Z)\).
\item \label{relax_basic_props_2}
  \(\sqrt{\gamma}\Psi_\gamma(Z) \leq  \ellone(Z)\)
  and \(\sqrt{1-\gamma}\Psi_\gamma(Z) \leq \owl(Z) \leq \sqrt{K N}\)
\item \label{relax_basic_props_3}
 \(\Psi_\gamma\) is Lipschitz continuous for any \(\gamma \in (0,1]\):
  \begin{align*}
  \abs{\Psi_\gamma(Z) - \Psi_\gamma(\widehat{Z})}
  \leq \sqrt{N/\gamma}\, \norm{Z-\widehat{Z}}_\fro.
  \end{align*}
\item \label{state:loc_lipsch} When \(\widehat{Z}^T\widehat{Z}\) is invertible, \(\owl(Z)\) is local Lipschitz continuous near $\widehat{Z}$ with 
\[|\owl(Z)-\owl(\widehat{Z})|\leq \max\{\norm{(Z^TZ)^{-1/2}}_\op, \norm{(\widehat{Z}^T\widehat{Z})^{-1/2}}_\op\} \|Z-\widehat Z\|_\fro.\]
  \end{enumerate}
\end{proposition}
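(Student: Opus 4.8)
The plan is to reduce all four assertions to elementary scalar facts about the generating functions \(w_\gamma\), using the singular-value-calculus representation \(\Psi_\gamma(Z)=\norm{U\,w_\gamma(\Sigma)\,V^T}_{2,1}\) (with \(Z=U\Sigma V^T\) a compact SVD) recorded just above the proposition, together with three trivial properties of the row norm: it is invariant under right multiplication by a matrix with orthonormal columns; it is monotone, \(\norm{U D_1 V^T}_{2,1}\le\norm{U D_2 V^T}_{2,1}\) whenever \(0\le D_1\le D_2\) are diagonal (since the squared \(n\)-th row norm equals \(\sum_i d_i^2 (u_n)_i^2\)); and \(\norm{M}_{2,1}\le\sqrt N\,\norm{M}_\fro\). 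Part~\ref{relax_basic_props_1} is then immediate: \(w_1(\sigma)=\sigma\) gives \(w_1(Z)=Z\) and \(\Psi_1=\ellone\), while \(w_0(Z)=Z(Z^TZ)^{\dagger/2}\) gives \(\Psi_0=\owl\) by definition. For Part~\ref{relax_basic_props_2} I would first record the pointwise bounds \(\sqrt\gamma\,w_\gamma(\sigma)\le\sigma\) and \(\sqrt{1-\gamma}\,w_\gamma(\sigma)\le w_0(\sigma)\), both obvious from \(\gamma+(1-\gamma)\sigma^2\ge\max\{\gamma,(1-\gamma)\sigma^2\}\); applying the monotonicity property to \(D_1=\sqrt\gamma\,w_\gamma(\Sigma)\le\Sigma\) and to \(D_1=\sqrt{1-\gamma}\,w_\gamma(\Sigma)\le w_0(\Sigma)\) yields the two inequalities, and \(\owl(Z)\le\sqrt{NK}\) follows from Lemma~\ref{lem:Psibound}.

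For Part~\ref{relax_basic_props_3}, the reverse triangle inequality for \(\norm{\cdot}_{2,1}\) and the norm equivalence give \(\abs{\Psi_\gamma(Z)-\Psi_\gamma(\widehat Z)}\le\norm{w_\gamma(Z)-w_\gamma(\widehat Z)}_{2,1}\le\sqrt N\,\norm{w_\gamma(Z)-w_\gamma(\widehat Z)}_\fro\), so it remains to bound the singular-value matrix function \(w_\gamma\) in Frobenius norm. The odd extension \(\widetilde w_\gamma(t)=t\,(\gamma+(1-\gamma)t^2)^{-1/2}\) is \(C^1\) on \(\R\) with \(\widetilde w_\gamma'(t)=\gamma\,(\gamma+(1-\gamma)t^2)^{-3/2}\in(0,\gamma^{-1/2}]\), hence \(\gamma^{-1/2}\)-Lipschitz. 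I would then invoke the standard fact that the singular-value matrix function associated with an \(L\)-Lipschitz odd scalar function is \(L\)-Lipschitz on matrices in \(\norm{\cdot}_\fro\) — citeable from the generalized matrix-calculus literature (e.g.\ \cite{AnderssonCarlssonPerfekt2015}) and derivable by passing to the Hermitian dilation \(\left(\begin{smallmatrix}0&Z\\ Z^T&0\end{smallmatrix}\right)\), whose eigenvalue calculus reproduces the singular-value calculus for odd functions, and applying the corresponding eigenvalue Lipschitz theorem. This gives \(\norm{w_\gamma(Z)-w_\gamma(\widehat Z)}_\fro\le\gamma^{-1/2}\norm{Z-\widehat Z}_\fro\), hence the claimed \(\sqrt{N/\gamma}\,\norm{Z-\widehat Z}_\fro\).

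Part~\ref{state:loc_lipsch} is the same argument after removing the discontinuity of \(w_0\) at the origin by localization. If \(\widehat Z^T\widehat Z\) is invertible and \(Z\) is close enough to \(\widehat Z\), then \(\sigma_0:=\min\{\sigma_{\min}(Z),\sigma_{\min}(\widehat Z)\}>0\) and every singular value of \(Z\) and of \(\widehat Z\) lies in \([\sigma_0,\infty)\), where \(w_0\equiv 1\); hence \(w_0\) agrees, on the spectra of \(Z\) and of \(\widehat Z\), with the globally \((1/\sigma_0)\)-Lipschitz odd function \(\widetilde w(t)=\operatorname{sgn}(t)\,\min\{\abs{t}/\sigma_0,\,1\}\), so \(w_0(Z)=\widetilde w(Z)\) and \(w_0(\widehat Z)=\widetilde w(\widehat Z)\). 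Running the Part~\ref{relax_basic_props_3} argument with \(L=1/\sigma_0=\max\{\norm{(Z^TZ)^{-1/2}}_\op,\,\norm{(\widehat Z^T\widehat Z)^{-1/2}}_\op\}\) bounds \(\norm{w_0(Z)-w_0(\widehat Z)}_\fro\le L\,\norm{Z-\widehat Z}_\fro\); since \(\owl(Z)=\norm{w_0(Z)}_{2,1}\), the reverse triangle inequality and \(\norm{\cdot}_{2,1}\le\sqrt N\,\norm{\cdot}_\fro\) then give the stated estimate and, in particular, local Lipschitz continuity of \(\owl\) near \(\widehat Z\) — which is clear anyway because \(\sigma_{\min}(Z)\ge\sigma_{\min}(\widehat Z)-\norm{Z-\widehat Z}_\op\) stays bounded away from \(0\) on a neighborhood.

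No step is deep, but the load-bearing input — used in both Parts~\ref{relax_basic_props_3} and~\ref{state:loc_lipsch} — is the Frobenius-norm Lipschitz property of singular-value matrix functions: this is a genuine statement about functions of matrices, not a scalar inequality, and must be quoted or derived with care (the Hermitian-dilation reduction to the eigenvalue case is the cleanest route). The only other subtlety is the localization in Part~\ref{state:loc_lipsch}, which is forced because \(\owl=\Psi_0\) is globally only lower semicontinuous (Proposition~\ref{cor:phi-lsc}) and fails to be Lipschitz across rank strata; restricting to a neighborhood on which \(Z^TZ\) stays uniformly invertible is exactly what turns the jump of \(w_0\) into the controlled slope \(1/\sigma_0\).
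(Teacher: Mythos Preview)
Your proposal is correct and follows essentially the same route as the paper: both use the singular-value-calculus representation \(\Psi_\gamma(Z)=\norm{w_\gamma(Z)}_{2,1}\), invoke the Frobenius Lipschitz bound for matrix functions from \cite{AnderssonCarlssonPerfekt2015} after checking the scalar Lipschitz constant, and for Part~\ref{state:loc_lipsch} replace \(w_0\) by the auxiliary \(1/\sigma_0\)-Lipschitz function that agrees with \(w_0\) on \([\sigma_0,\infty)\). Note that, just like the paper's own proof, your argument in Part~\ref{state:loc_lipsch} actually produces an extra factor \(\sqrt{N}\) relative to the displayed inequality in the statement; this is a discrepancy already present in the paper, not a flaw specific to your write-up.
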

\begin{proof}
The proofs of~\ref{relax_basic_props_1} and~\ref{relax_basic_props_2} are straight-forward.
Property~\ref{relax_basic_props_3} follows from \(\Psi_\gamma(Z) = \norm{w_\gamma(Z)}_{2,1}\), Lipschitz continuity of the norm with constant one
and~\cite[Theorem~1.1]{AnderssonCarlssonPerfekt2015}
\[
\norm{w_\gamma(Z)}_{2,1} - \norm{w_\gamma(\widehat{Z})}_{2,1}
\leq \norm{w_\gamma(Z) - w_\gamma(\widehat{Z})}_{2,1}
\leq \sqrt{N} \norm{w_\gamma(Z) - w_\gamma(\widehat{Z})}_\fro
\leq  \sqrt{N/\gamma}\norm{Z-\widehat{Z}}_\fro
\]
due to \(\abs{w_\gamma(\sigma) - w_\gamma(\sigma')} \leq \gamma^{-1/2}\abs{\sigma-\sigma'}\) for all \(\sigma, \sigma'\).

Let \(Z\) and \(\widehat{Z}\) with singular values bounded from below by \(\min\{\sigma(Z),\sigma(\widehat{Z})\} = \tilde{\sigma} > 0\),
i.e.\ we set \(1/\tilde{\sigma} = \max\{\norm{(Z^TZ)^{-1/2}}_\op, \norm{(\widehat{Z}^T\widehat{Z})^{-1/2}}_\op\}\).
Define
\[
\tilde{w}_0(\sigma)
= \begin{cases}
\sigma/\tilde{\sigma} &\text{for } \sigma < \tilde{\sigma} \\
1 &\text{else.}
\end{cases}
\]
Then, it holds
\begin{align*}
    \abs{\owl(Z) - \owl(\widehat{Z})}&
= \abs{\norm{\tilde{w}_0(Z)}_{2,1} - \norm{\tilde{w}_0(\widehat{Z})}}_{2,1}\\&
\leq \norm{\tilde{w}_0(Z) - \tilde{w}_0(\widehat{Z})}_{2,1}\\&
\leq \sqrt{N}\norm{\tilde{w}_0(Z) - \tilde{w}_0(\widehat{Z})}_{2,2}\\&
\leq \sqrt{N}/\tilde{\sigma}\norm{Z - \widehat{Z}}_{2,2},
\end{align*}
where \(\tilde{w}_0(Z)\) is the singular value calculus of the function \(\tilde{w}_0(\sigma)\)
and we applied \cite[Theorem~1.1]{AnderssonCarlssonPerfekt2015}.
\end{proof}

\begin{proposition}
\label{convergence:Psi}
\(\Psi_\gamma\) \(\Gamma\)-converges to \(\Psi\) for \(\gamma \to 0\): For any \(Z^k \to Z\) and \(\gamma_k \to 0\) it holds
\[
\liminf_{k\to\infty} \Psi_{\gamma_k}(Z^k) \geq \owl(Z)
\]
and for any \(\gamma_k \to 0\) and \(Z \in \R^{N\times K}\) there exists a sequence \(Z^k \to Z\) such that
\[
\limsup_{k\to\infty} \Psi_{\gamma_k}(Z^k) \leq \owl(Z).
\]
\end{proposition}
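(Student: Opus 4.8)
The plan is to prove the two halves of the $\Gamma$-convergence separately, working from the row-wise representations
\[
\Psi_\gamma(Z) = \sum_{n=1}^N \sqrt{q_n(\gamma,Z)}, \qquad q_n(\gamma,Z) = z_n^T\left(\gamma I + (1-\gamma)Z^TZ\right)^{-1}z_n \quad (\gamma \in (0,1]),
\]
\[
\owl(Z) = \sum_{n=1}^N \sqrt{p_n(Z)}, \qquad p_n(Z) = z_n^T (Z^TZ)^\dagger z_n,
\]
where $z_n$ denotes the $n$-th row of the matrix in the argument, viewed as a column vector. Both follow from $\Psi_\gamma(Z) = \norm{w_\gamma(Z)}_{2,1}$ together with $w_\gamma(Z) = Z(\gamma I + (1-\gamma)Z^TZ)^{-1/2}$ for $\gamma>0$ and $w_0(Z) = Z(Z^TZ)^{\dagger/2}$: the $n$-th row of $w_\gamma(Z)$ is $z_n^T(\gamma I + (1-\gamma)Z^TZ)^{-1/2}$, and squaring its Euclidean norm gives $q_n(\gamma,Z)$ (respectively $p_n(Z)$ for $\gamma = 0$).

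I would dispatch the recovery-sequence ($\limsup$) inequality first, using the constant sequence $Z^k \equiv Z$. With a compact SVD $Z = U\Sigma V^T$, the representation $\Psi_{\gamma}(Z) = \norm{U\, w_{\gamma}(\Sigma)}_{2,1}$ from above (orthonormality of the columns of $V$) applies. The singular values of $Z$ form a fixed finite set of positive numbers, and $w_{\gamma_k}(\sigma) = \sigma/\sqrt{\gamma_k + (1-\gamma_k)\sigma^2} \to 1$ for each $\sigma>0$ as $\gamma_k \to 0$; hence $w_{\gamma_k}(\Sigma) \to I$, so $U\, w_{\gamma_k}(\Sigma) \to U$, and continuity of $\norm{\cdot}_{2,1}$ gives $\Psi_{\gamma_k}(Z) \to \norm{U}_{2,1} = \owl(Z)$ — even the limit, not merely the $\limsup$.

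The $\liminf$ inequality is the main obstacle, precisely because $w_{\gamma_k}(Z^k)$ need not converge to $w_0(Z)$ when $(Z^k)^TZ^k$ degenerates in the limit (the regularized inverse can pick up extra diagonal mass), so a genuine one-sided estimate is required rather than continuity of the pseudo-inverse. The key is the variational identity $z^TM^{-1}z = \sup_y\bigl(2y^Tz - y^TMy\bigr)$ for $M\succ 0$ (complete the square) and its extension $z^TM^\dagger z = \sup_y\bigl(2y^Tz - y^TMy\bigr)$ for $M\succeq 0$ with $z \in \operatorname{range}(M)$ (split $y$ along $\operatorname{range}(M)\oplus\ker(M)$). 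Applying the first with $M = \gamma_k I + (1-\gamma_k)(Z^k)^TZ^k$ yields, for every fixed $y$,
\[
q_n(\gamma_k, Z^k) \geq 2 y^T z_n^k - \gamma_k \norm{y}_2^2 - (1-\gamma_k)\norm{Z^k y}_2^2 ,
\]
where $z_n^k$ is the $n$-th row of $Z^k$. Since $Z^k \to Z$ and $\gamma_k \to 0$, the right-hand side tends to $2y^Tz_n - \norm{Zy}_2^2$; taking $\liminf_k$ and then $\sup_y$, and using $z_n \in \operatorname{range}(Z^T) = \operatorname{range}(Z^TZ)$ (as $z_n$ is a row of $Z$), the second identity gives $\liminf_k q_n(\gamma_k,Z^k) \geq z_n^T(Z^TZ)^\dagger z_n = p_n(Z)$ for each $n$. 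Finally, superadditivity of $\liminf$ over the finite sum together with continuity and monotonicity of $\sqrt{\cdot}$ give
\[
\liminf_{k\to\infty}\Psi_{\gamma_k}(Z^k) = \liminf_{k\to\infty}\sum_{n=1}^N\sqrt{q_n(\gamma_k,Z^k)} \geq \sum_{n=1}^N\sqrt{\,\liminf_{k\to\infty}q_n(\gamma_k,Z^k)\,} \geq \sum_{n=1}^N\sqrt{p_n(Z)} = \owl(Z).
\]
The remaining points — the SVD computations establishing the representations of $\Psi_\gamma$ and $\owl$, and the elementary identity $\operatorname{range}(Z^T) = \operatorname{range}(Z^TZ)$ — are routine and present no difficulty.
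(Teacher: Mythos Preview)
Your proof is correct and takes a genuinely different route from the paper for the $\liminf$ inequality. The paper works in the singular value calculus: it introduces a Lipschitz surrogate $\tilde{w}_\gamma$ (equal to $w_\gamma$ above the smallest nonzero singular value of the limit $Z$, linear below), uses concavity of $w_\gamma$ to get $w_\gamma \geq \tilde{w}_\gamma$ and hence $\Psi_{\gamma_k}(Z_k) \geq \norm{\tilde{w}_{\gamma_k}(Z_k)}_{2,1}$, and then passes to the limit via a uniform-Lipschitz result for the singular value functional calculus from \cite{AnderssonCarlssonPerfekt2015}. Your argument instead exploits the Fenchel-type identity $z^TM^{-1}z = \sup_y(2y^Tz - y^TMy)$ row by row, freezes $y$ to pass to the limit, and then optimizes over $y$ using $z_n \in \operatorname{range}(Z^TZ)$. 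This is more elementary and entirely self-contained (no external Lipschitz lemma, no SVD bookkeeping for the sequence $Z_k$), and it makes the lower-semicontinuity mechanism transparent as a convex-duality fact. The paper's approach, on the other hand, stays within the singular-value-function framework used throughout the appendix and reuses machinery already in place for the Lipschitz estimates in Proposition~\ref{prop:relax_basic_props}. For the $\limsup$ inequality both proofs coincide: the constant recovery sequence $Z^k \equiv Z$ together with $w_{\gamma_k}(\sigma) \to 1$ on the finitely many nonzero singular values.
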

\begin{proof}
To show the first condition, denote the
limit matrix \(Z\) and its full singular value decomposition by
\[
 Z = U \Sigma V,
\quad\text{where } V,\Sigma \in \R^{K\times K}, U \in \R^{N \times K}.
\]
Moreover, let a compatible SVD of \(Z_k\) be given by
\[
Z_k = U_k \Sigma_k V_k,
\quad\text{where } V_k,\Sigma_k \in \R^{K\times K}, U \in \R^{N \times K},
\]
such that \(\Sigma_k \to \Sigma\) for \(k\to\infty\) (e.g., by sorting the singular values by magnitude).
Then, we can write
\[
Z (Z^T Z)^{\dagger/2} = U w_0(\Sigma) V = w_0(Z) ,
\quad\text{and }
Z_k (\gamma_k I + (1-\gamma_k)Z_k^TZ_k)^{-1/2}
= U_k w_{\gamma_k}(\Sigma_k) V_k = w_{\gamma_k}(Z_k).
\]
Additionally let \(0 < \hat{\sigma} < \min\{\,\sigma_i\;|\;\sigma_i \in \operatorname{diag}(\Sigma), \sigma_i > 0\,\} \) be a lower bound on the smallest nonzero singular value of \(\Sigma\) and introduce the function
\[
\tilde{w}_\gamma(\sigma)
=
\begin{cases}
(\sigma/\hat{\sigma}) w_\gamma(\hat{\sigma}) &\text{for } \sigma\leq \hat{\sigma}\\
w_\gamma(\sigma) &\text{else.}
\end{cases}
\]
Clearly it holds \(w_\gamma \geq \tilde{w}_\gamma\) (by concavity of \(w_\gamma\)), \(\tilde{w}_\gamma\) is uniformly Lipschitz-continuous (independently of \(\gamma\)), and \(\tilde{w}_\gamma(\sigma) \to \tilde{w}_0(\sigma)\).
By elementary arguments, it holds for any \(\gamma\) and \(\Sigma\) that
\[
w_{\gamma}(\Sigma) \geq \tilde{w}_{\gamma}(\Sigma).
\]
Squaring this estimate leads for any \(v\) to the estimate
\[
\norm{v w_\gamma(\Sigma)}^2_2 = v^T w_\gamma(\Sigma)^2 v \geq  v^T \tilde{w}_\gamma(\Sigma)^2 v = \norm{v\tilde{w}_\gamma(\Sigma)}^2_2.
\]
Thus, it follows that
\[
\liminf_{k\to \infty} \Psi_{\gamma_k}(Z_k) 
= \liminf_{k\to \infty} \norm{U_k w_{\gamma_k}(\Sigma_k)}_{2,1} 
\geq \liminf_{k\to \infty} \norm{U_k \tilde{w}_{\gamma_k}(\Sigma_k)}_{2,1}
= \liminf_{k\to \infty}\norm{\tilde{w}_{\gamma_k}(Z_k)}_{2,1}.
\]
Using the fact that
\[
\norm{U_k \tilde{w}_{\gamma_k}(\Sigma_k)}_{2,1}
= \norm{U_k \tilde{w}_{\gamma_k}(\Sigma_k)V_k}_{2,1}
= \norm{\tilde{w}_{\gamma_k}(Z_k)}_{2,1}.
\]
To go to the limit we split the error into two terms:
\[
\tilde{w}_{\gamma_k}(Z_k)
= (\tilde{w}_{\gamma_k}(Z_k) - \tilde{w}_{0}(Z_k)) + (\tilde{w}_{0}(Z_k) - \tilde{w}_{0}(Z)) + \tilde{w}_{0}(Z).
\]
The first term can be estimated through a uniform estimate of \(\tilde{w}_{\gamma_k}(\cdot) - \tilde{w}_{0}(\cdot)\) on the compact interval \([0, \sup_{k,i} \sigma^k_i]\) and using the fact that \(U_k\) and \(V_k\) are uniformly bounded in any matrix norm. The second term can be estimated by the Lipschitz constant of \(\tilde{w}_{0}\) equal to \(1/\hat{\sigma}\) and the error of \(Z_k-Z\) using \cite[Theorem~1.1]{AnderssonCarlssonPerfekt2015}. Thus, indeed 
\[
\liminf_{k\to \infty} \Psi_{\gamma_k}(Z_k) 
\geq \liminf_{k\to \infty}\norm{\tilde{w}_{\gamma_k}(Z_k)}_{2,1}
= \norm{\tilde{w}_{0}(Z)}_{2,1} = \owl(Z),
\]
which shows the first assertion. For the second assertion, we can choose the recovery sequence \(Z_k = Z\) and obtain
\[
\Psi_{\gamma_k}(Z)
= \norm{U w_{\gamma_k}(\Sigma)}_{2,1}
\to \norm{U w_{0}(\Sigma)}_{2,1}
= \owl(Z),
\]
for \(k\to \infty\).
\end{proof}

In the following, we derive an analysis of the method based on splitting the nonconvex nonsmooth term \(\Psi_\gamma\) into a convex nonsmooth and a nonconvex quadratic remainder at a given fixed point \(\widehat{Z}\) (which can be thought of as an approximate local minimizer or current iterate \(Z_k\)).
For this, define the functions
\[
  M[Z] = \gamma I + (1-\gamma) Z^T Z
  \quad\text{and } W[Z] = M[Z]^{\dagger}
\]
and fix $\widehat{Z} \in \R^{N\times K}$ with associated $\widehat{M} = M[\widehat{Z}]$ and $\widehat{W} = W[\widehat{Z}]$ and assume that \(\widehat{M}\) is symmetric positive definite.
The last requirement is fulfilled for either \(\gamma > 0\) or if \(\widehat{Z}\) has full rank, i.e., \(\widehat{Z} \in \St(N,K)\). Then it also holds
\(\widehat{W} = \widehat{M}^{-1}\).
We recall the notation \(\|Z\|_{W,2} = \sqrt{\tr(ZWZ^T)} = \norm{ZW^{1/2}}_{\fro}.\)

\begin{theorem}\label{thm:first_order_approx}
Let $\widehat M = M[\widehat{Z}]$ be invertible.
Then, the functional $\Psi_\gamma(Z)$ is equal to a sum of convex part and  non-convex  quadratic remainder:
\begin{equation}
  \label{eq:expand_Psi}
  \Psi_\gamma(Z)
  = \norm{Z}_{W[Z],1}
  = \norm{Z}_{\widehat{W},1} + \langle \widehat{Z}\widehat{\Lambda}, Z-\widehat{Z} \rangle + R(Z,\widehat{Z}),
\end{equation}
where
\[
\widehat{\Lambda} 
= -\sum_{n=1}^N\frac{1-\gamma}{\norm{\widehat{z}_n}_{\widehat{W}}} \widehat{W} \widehat{z}_n \widehat{z}_n^T \widehat{W}.
\]
In fact, there exist constants \(\bar{L}\) and \(\bar{\Delta}\),
such that for all \(Z\) with \(\norm{Z-\widehat{Z}}_{\widehat{W},2} \leq \bar{\Delta}\),
\(M[Z]\) is also invertible, and there holds
\begin{equation}
  \label{eq:estimate_R}
\abs{R(Z,\widehat{Z})} 
\leq \frac{\bar{L}}{2} \norm{Z-\widehat{Z}}^2_{\widehat W, 2}.
\end{equation}
\end{theorem}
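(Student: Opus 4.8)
The plan is to define \(R\) directly by~\eqref{eq:expand_Psi}, i.e.\ \(R(Z,\widehat Z):=\Psi_\gamma(Z)-\norm{Z}_{\widehat W,1}-\langle\widehat Z\widehat\Lambda,Z-\widehat Z\rangle\), and then to verify (i) that this is the ``right'' decomposition, in the sense that \(R\) has vanishing value and first-order variation at \(\widehat Z\), and (ii) that \(R\) admits the quadratic bound near \(\widehat Z\). The decomposition mirrors a split of the (one-sided) first-order behaviour of \(\Psi_\gamma(Z)=\sum_{n=1}^N\norm{z_n}_{W[Z]}\) at \(\widehat Z\) into a nonsmooth part coming from the dependence on the rows \(z_n\) — which is precisely a subgradient of \(\norm{\cdot}_{\widehat W,1}\), hence kept unexpanded — and a smooth part coming from the dependence of the weight \(W[Z]=M[Z]^{-1}\) on \(Z\). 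To confirm the stated \(\widehat\Lambda\), I would differentiate \(Z\mapsto\norm{z_n}_{W[Z]}\) for a row with \(\widehat z_n\neq0\) using \(DM[Z][\Delta]=(1-\gamma)(\Delta^TZ+Z^T\Delta)\) and the resolvent identity \(DW[Z][\Delta]=-W[Z]\,DM[Z][\Delta]\,W[Z]\): the \(z_n\)-variation produces \(\widehat W\widehat z_n/\norm{\widehat z_n}_{\widehat W}\), and the \(W\)-variation produces exactly \(-\tfrac{1-\gamma}{\norm{\widehat z_n}_{\widehat W}}\langle\widehat Z\widehat W\widehat z_n\widehat z_n^T\widehat W,\Delta\rangle\), whose sum over \(n\) equals \(\langle\widehat Z\widehat\Lambda,\Delta\rangle\); rows with \(\widehat z_n=0\) contribute only the directional term \(\norm{\Delta_n}_{\widehat W}\) and no smooth part, consistent with the standing convention that the \(n\)-th summand of \(\widehat\Lambda\) is \(0\) when \(\widehat z_n=0\) (legitimate since its numerator vanishes). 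Consequently \(R=\sum_{n=1}^N g_n\) with \(g_n(Z):=\norm{z_n}_{W[Z]}-\norm{z_n}_{\widehat W}+\tfrac{1-\gamma}{\norm{\widehat z_n}_{\widehat W}}\widehat z_n^T\widehat W\widehat Z^T(Z-\widehat Z)\widehat W\widehat z_n\), and by construction \(g_n(\widehat Z)=0\) and \(g_n\) has vanishing (one-sided) first variation at \(\widehat Z\).

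Next I would fix the neighbourhood. Since \(\widehat M=\gamma I+(1-\gamma)\widehat Z^T\widehat Z\) is symmetric positive definite by hypothesis and \(Z\mapsto M[Z]\) is continuous (polynomial), a standard matrix-perturbation argument (the resolvent identity \(W[Z]-\widehat W=-W[Z](M[Z]-\widehat M)\widehat W\), cf.\ Proposition~\ref{prop:W_perturb}) provides a radius \(\bar\Delta_0>0\) and a constant \(L_W\) so that \(\norm{Z-\widehat Z}_{\widehat W,2}\le\bar\Delta_0\) implies \(M[Z]\) invertible, \(\lambda_{\min}(M[Z])\ge\tfrac12\lambda_{\min}(\widehat M)\), and \(\norm{W[Z]-\widehat W}_\op\le L_W\norm{Z-\widehat Z}_\fro\). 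Shrinking to \(\bar\Delta:=\min\{\bar\Delta_0,\ \tfrac12\min\{\norm{\widehat z_n}_{\widehat W}:\widehat z_n\neq0\}\}\) additionally guarantees, for the ``active'' rows \(\widehat z_n\neq0\), that \(\norm{z_n}_{\widehat W}\ge\tfrac12\norm{\widehat z_n}_{\widehat W}>0\) along the whole segment \([\widehat Z,Z]\), while for the ``inactive'' rows \(\widehat z_n=0\) one has \(g_n\equiv0\) unless \(z_n\neq0\). Throughout I would use \(\norm{\cdot}_\fro\le(\gamma+(1-\gamma)\norm{\widehat Z}_\op^2)^{1/2}\norm{\cdot}_{\widehat W,2}\) (since \(\widehat W^{-1}=\widehat M\)) to pass between the two norms.

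I would then bound each \(g_n\) separately. For an active row, \(Z\mapsto g_n(Z)\) is \(C^2\) on \(\norm{Z-\widehat Z}_{\widehat W,2}\le\bar\Delta\) — the two square-root terms are smooth because \(z_n\) stays bounded away from \(0\), and \(W[Z]\) is smooth because \(M[Z]\) stays invertible — so Taylor's theorem with \(g_n(\widehat Z)=0\) and \(Dg_n(\widehat Z)=0\) gives \(|g_n(Z)|\le\tfrac12\big(\sup_{\xi\in[\widehat Z,Z]}\norm{D^2 g_n(\xi)}\big)\norm{Z-\widehat Z}_\fro^2\), the supremum being finite by the uniform lower bounds on \(\norm{\xi_n}_2\) and \(\lambda_{\min}(M[\xi])\) and the upper bound on \(\norm{\xi}_\op\). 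For an inactive row with \(z_n\neq0\) I would use the identity \(\sqrt a-\sqrt b=(a-b)/(\sqrt a+\sqrt b)\) with \(a=z_n^TW[Z]z_n\), \(b=z_n^T\widehat Wz_n\) to write \(g_n(Z)=z_n^T(W[Z]-\widehat W)z_n/(\norm{z_n}_{W[Z]}+\norm{z_n}_{\widehat W})\), bound the numerator by \(\norm{W[Z]-\widehat W}_\op\norm{z_n}_2^2\le L_W\norm{Z-\widehat Z}_\fro\norm{z_n}_2^2\) and the denominator from below by \(\norm{z_n}_{\widehat W}\ge\lambda_{\min}(\widehat W)^{1/2}\norm{z_n}_2\); since \(\norm{z_n}_2=\norm{z_n-\widehat z_n}_2\le\norm{Z-\widehat Z}_\fro\), this yields \(|g_n(Z)|\le\lambda_{\min}(\widehat W)^{-1/2}L_W\norm{Z-\widehat Z}_\fro^2\).

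Summing the per-row estimates over \(n=1,\dots,N\) and converting to the \(\widehat W\)-norm then gives~\eqref{eq:estimate_R} with an explicit \(\bar L\), completing the proof; invertibility of \(M[Z]\) on the ball was already secured in the second step. The main obstacle is the inactive rows: there \(g_n\) fails to be differentiable at \(\widehat Z\) (both \(\norm{z_n}_{W[Z]}\) and \(\norm{z_n}_{\widehat W}\) behave like \(\norm{z_n}_2\) near \(z_n=0\)), so Taylor's theorem is unavailable and one must instead exploit the algebraic cancellation in the difference of the two square roots together with the local Lipschitz continuity of \(Z\mapsto W[Z]\). A secondary, purely bookkeeping, difficulty is making all constants — the \(C^2\)-bounds for active rows and the radius \(\bar\Delta\) itself — uniform on a single neighbourhood, which is exactly what forces the active/inactive split and the two-stage choice of \(\bar\Delta\).
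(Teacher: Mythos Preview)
Your decomposition \(R=\sum_n g_n\) and the verification that each \(g_n\) has vanishing value and derivative at \(\widehat Z\) are correct, and the active/inactive split does produce a quadratic bound on each \(g_n\). So for the theorem \emph{as literally stated} (constants \(\bar L,\bar\Delta\) allowed to depend on \(\widehat Z\)) your argument goes through.

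However, your route differs from the paper's in a way that matters downstream. The paper never splits rows according to whether \(\widehat z_n\) is zero. Instead it applies the single algebraic identity
\(\sqrt a-\sqrt{\widehat a}=(a-\widehat a)/(2\sqrt{\widehat a})-(a-\widehat a)^2/\bigl(2\sqrt{\widehat a}(\sqrt a+\sqrt{\widehat a})^2\bigr)\)
with \(a=z_n^TWz_n\) and \(\widehat a=z_n^T\widehat Wz_n\) for every row with \(z_n\neq 0\) (note: indexed by \(z_n\), not \(\widehat z_n\)). This produces a linear term \(\langle\widehat Z\Lambda,Z-\widehat Z\rangle\) with an \emph{intermediate} multiplier \(\Lambda=-\sum_n\frac{1-\gamma}{\|z_n\|_{\widehat W}}\widehat W z_nz_n^T\widehat W\) built from \(z_n\) rather than \(\widehat z_n\); the passage \(\Lambda\to\widehat\Lambda\) is then controlled by the Lipschitz continuity of \(z\mapsto zz^T/\|z\|\) (Proposition~\ref{prop:rank_one_fro}), which is uniform. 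Together with the perturbation estimates for \(W\) in Proposition~\ref{prop:W_perturb}, all constants in the paper's bound depend only on \(N,K,\gamma\) and \(\|Z-\widehat Z\|_{\widehat W,2}\), not on \(\widehat Z\) itself (using \(\|\widehat Z\widehat W^{1/2}\|_\op\le(1-\gamma)^{-1/2}\) and \(\|\widehat Z\|_{\widehat W,1}=\Psi_\gamma(\widehat Z)\le\sqrt{NK/(1-\gamma)}\)).

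By contrast, your \(\bar\Delta\) contains the factor \(\tfrac12\min\{\|\widehat z_n\|_{\widehat W}:\widehat z_n\neq 0\}\), and your \(C^2\)-bound for an active row blows up like \(\|\widehat z_n\|_{\widehat W}^{-1}\). So your constants degenerate as a nonzero row of \(\widehat Z\) becomes small. This is precisely what the paper's approach avoids, and the uniformity is what is invoked in Proposition~\ref{prop:model_func_approx}(\ref{part:2_prop:model_func_approx}) (``some universal \(\bar\Delta\)'', ``\(\widehat L\) only dependent on \(\|\widehat Z\|_\op\)'') and then in Proposition~\ref{prop:ls_Lipschitz} to get a stepsize lower bound independent of the iterate. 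With your estimates that chain would break: along the iteration a row can shrink toward zero before being pruned, and your \(\bar\Delta\) and \(\bar L\) would not stay controlled. If you want to salvage your Taylor route, you would need to replace the active/inactive dichotomy by a uniform argument that does not see \(\min_n\|\widehat z_n\|_{\widehat W}\); the paper's trick of expanding around \(z_n\) first and only afterwards trading \(z_n\) for \(\widehat z_n\) via Proposition~\ref{prop:rank_one_fro} is exactly such a device.
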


\begin{remark}
\label{rem:Lambda}
We note that the terms in the definition of \(\widehat{\Lambda}\) with \(\widehat{z}_n = 0\) are not defined properly. They are evaluated as zero by convention, which is consistent with the limit \(\widehat{z}_n \to 0\) of each expression under the sum.
  Note also that \(-\widehat{\Lambda}\) is a symmetric nonnegative matrix with
\(-\tr(\widehat{W}^{-1/2}\widehat{\Lambda}\widehat{W}^{-1/2}) = (1-\gamma) \norm{\widehat{Z}}_{\widehat{W},1}\).
Thus, the linear term in the expansion~\eqref{eq:expand_Psi} can be estimated as
\[
  \langle \widehat{Z}\widehat{\Lambda}, Z-\widehat{Z} \rangle
  \leq (1-\gamma) \norm{\widehat{Z}}_{\widehat{W},1} \|\widehat{Z}\widehat{W}^{1/2}\|_\op \norm{Z-\widehat{Z}}_{\widehat{W}, 2}
  \leq (1-\gamma)^{1/2} \norm{\widehat{Z}}_{\widehat{W},1} \norm{Z-\widehat{Z}}_{\widehat{W}, 2}.
\]
\end{remark}

To prove this result, we first derive some continuity and differentiability properties of \(W\).
\begin{proposition}
  \label{prop:matrix_fun_bound}
For every \(Z\) and \(W = W[Z]\) there holds \(\|W^{1/2}Z^T\|_\op = \|ZW^{1/2}\|_\op \leq (1-\gamma)^{-1/2}\).
\end{proposition}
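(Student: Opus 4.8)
The plan is to reduce the estimate to a scalar inequality by simultaneously diagonalizing $Z^TZ$ and $W[Z]$ via the singular value decomposition of $Z$. First I would dispose of the equality $\|W^{1/2}Z^T\|_\op = \|ZW^{1/2}\|_\op$: since $W$, and hence $W^{1/2}$, is symmetric, $W^{1/2}Z^T = (ZW^{1/2})^T$, and the operator norm is invariant under transposition. So it suffices to bound $\|ZW^{1/2}\|_\op$.

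Next I would take a compact SVD $Z = U\Sigma V^T$ with $U\in\R^{N\times r}$, $V\in\R^{K\times r}$ of orthonormal columns, $\Sigma = \operatorname{diag}(\sigma_1,\dots,\sigma_r)$ with all $\sigma_i>0$ and $r=\rank Z$, and complete $V$ to an orthogonal matrix $\widetilde V = [\,V\ V_\perp\,]\in\R^{K\times K}$. Then $Z^TZ = \widetilde V\,\operatorname{diag}(\sigma_1^2,\dots,\sigma_r^2,0,\dots,0)\,\widetilde V^T$, so $M[Z] = \gamma I + (1-\gamma)Z^TZ$ is diagonalized by $\widetilde V$ with diagonal entries $\gamma+(1-\gamma)\sigma_i^2$ for $i\le r$ and $\gamma$ on the complement. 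Passing to the Moore--Penrose pseudo-inverse and then its square root commutes with the orthogonal conjugation, so $W[Z]^{1/2} = \widetilde V\,\operatorname{diag}(d_1^{1/2},\dots,d_r^{1/2},e^{1/2},\dots,e^{1/2})\,\widetilde V^T$, where $d_i = (\gamma+(1-\gamma)\sigma_i^2)^{-1}$ and $e = 1/\gamma$ if $\gamma>0$, $e=0$ if $\gamma=0$.

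Then I would compute $ZW^{1/2} = U\Sigma V^T\widetilde V\,\operatorname{diag}(d_1^{1/2},\dots)\,\widetilde V^T = U\,\operatorname{diag}(\sigma_1 d_1^{1/2},\dots,\sigma_r d_r^{1/2})\,V^T$, the $V_\perp$ block being annihilated by $V^T\widetilde V = [\,I_r\ 0\,]$. Since left multiplication by a matrix with orthonormal columns and right multiplication by one with orthonormal rows both preserve $\|\cdot\|_\op$, this yields $\|ZW^{1/2}\|_\op = \max_{1\le i\le r}\sigma_i d_i^{1/2}$. Finally, for each $i$,
\[
  \sigma_i^2 d_i = \frac{\sigma_i^2}{\gamma+(1-\gamma)\sigma_i^2} = \frac{1}{\gamma/\sigma_i^2 + (1-\gamma)} \le \frac{1}{1-\gamma},
\]
so $\sigma_i d_i^{1/2}\le (1-\gamma)^{-1/2}$, giving $\|ZW^{1/2}\|_\op \le (1-\gamma)^{-1/2}$ (the bound being vacuous for $\gamma=1$).

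I do not anticipate a real obstacle; the only point needing care is the pseudo-inverse bookkeeping when $\gamma=0$ and $Z$ is rank-deficient. There the zero singular directions of $Z$ produce zero blocks in $W[Z]$, but they are also annihilated by $\Sigma$, so they never contribute to $\|ZW^{1/2}\|_\op$ — which is exactly why carrying out the argument with the compact SVD (and the reduced factor $V$) keeps the estimate transparent.
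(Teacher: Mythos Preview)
Your argument is correct and is essentially the paper's proof unpacked: the paper writes $ZW^{1/2} = w_\gamma(Z) = U\,w_\gamma(\Sigma)\,V^T$ via its singular value calculus and then notes $w_\gamma(\sigma) = \sigma/\sqrt{\gamma+(1-\gamma)\sigma^2} \le (1-\gamma)^{-1/2}$, which is exactly your diagonalization and scalar estimate. Your explicit treatment of the transpose equality and the pseudo-inverse bookkeeping for $\gamma=0$ is a welcome elaboration but does not change the route.
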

\begin{proof}
  With the singular value calculus we have
  \(ZW^{1/2} = w_\gamma(Z) = U w_\gamma(\Sigma) V\)
  and \(w_\gamma(\cdot) \leq (1-\gamma)^{-1/2}\).
\end{proof}
\begin{proposition}
  \label{prop:W_perturb}
  For \(W = W[Z]\), \(\widehat{W} = W[\widehat{Z}]\) such that both are invertible
  there holds
\begin{align}
  \abs{z^T (W - \widehat{W}) z }
  &\leq C_W  \norm{z}^2_{\widehat{W}}\norm{(Z-\widehat{Z})\widehat{W}^{1/2}}_{\op}
    \quad\text{for all } z \in \R^K,
    \label{eq:W_diff_estimate}
\end{align}
where
\(C_W = (1-\gamma)^{1/2} \norm{\widehat{W}^{-1}W}_\op(2 + \norm{(Z-\widehat{Z})\widehat{W}^{1/2}}_{\op})\).
Moreover with the perturbation of \(W\) defined by
\[
  D_W = -2 (1-\gamma) \widehat{W}\widehat{Z}^T(Z-\widehat{Z})\widehat{W}
\]
and
\(
C'_W = (1-\gamma)\left(1 + \norm{\widehat{W}^{-1}W}_\op(2 + \norm{(Z-\widehat{Z})\widehat{W}^{1/2}}_{\op})^2\right)
\)
there holds
\begin{align}
  \abs{z^T (W - \widehat{W} - D_W) z }
  &\leq C'_W  \norm{z}^2_{\widehat{W}}\norm{(Z-\widehat{Z})\widehat{W}^{1/2}}^2_{\op}
    \quad\text{for all } z \in \R^K.
    \label{eq:W_deriv_estimate}
\end{align}
\end{proposition}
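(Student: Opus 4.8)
The plan is to conjugate everything by $\widehat{W}^{1/2}$, which turns both inequalities into operator-norm bounds for a single matrix perturbation, and then to expand that perturbation using the resolvent identity. Set $E = Z-\widehat{Z}$, let $\widehat{M} = M[\widehat{Z}]$ (so $\widehat{W} = \widehat{M}^{-1}$, $\widehat{W}^{1/2} = \widehat{M}^{-1/2}$), and introduce the conjugated matrices $\tilde{M} = \widehat{W}^{1/2}M[Z]\widehat{W}^{1/2}$ and $\tilde{W} = \tilde{M}^{-1} = \widehat{W}^{-1/2}W\widehat{W}^{-1/2}$. For $z\in\R^K$ put $u = \widehat{W}^{1/2}z$, so that $\norm{u}_2 = \norm{z}_{\widehat{W}}$; since $W = \widehat{W}^{1/2}\tilde{W}\widehat{W}^{1/2}$ we get $z^T(W-\widehat{W})z = u^T(\tilde{W}-I)u$, and with $\tilde{D} = -2(1-\gamma)\widehat{W}^{1/2}\widehat{Z}^TE\widehat{W}^{1/2}$ also $z^T D_W z = u^T\tilde{D}u$, whence $z^T(W-\widehat{W}-D_W)z = u^T(\tilde{W}-I-\tilde{D})u$. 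Because only quadratic forms in $u$ are estimated, $\tilde{D}$ may be replaced by its symmetric part $\tilde{D}^{\mathrm{sym}} = -(1-\gamma)\widehat{W}^{1/2}(\widehat{Z}^TE + E^T\widehat{Z})\widehat{W}^{1/2}$, and it remains to bound the operator norms of the symmetric matrices $\tilde{W}-I$ and $\tilde{W}-I-\tilde{D}^{\mathrm{sym}}$.

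The core estimate is on $\mathcal{E} := \tilde{M}-I$. Using the elementary identity $Z^TZ - \widehat{Z}^T\widehat{Z} = Z^TE + E^T\widehat{Z} = \widehat{Z}^TE + E^TE + E^T\widehat{Z}$, we have $\mathcal{E} = (1-\gamma)\widehat{W}^{1/2}(Z^TE + E^T\widehat{Z})\widehat{W}^{1/2}$. Splitting the two summands as $(\widehat{W}^{1/2}Z^T)(E\widehat{W}^{1/2}) + (\widehat{W}^{1/2}E^T)(\widehat{Z}\widehat{W}^{1/2})$, writing $\widehat{W}^{1/2}Z^T = \widehat{W}^{1/2}\widehat{Z}^T + \widehat{W}^{1/2}E^T$, and invoking Proposition~\ref{prop:matrix_fun_bound} (which gives $\norm{\widehat{Z}\widehat{W}^{1/2}}_\op = \norm{\widehat{W}^{1/2}\widehat{Z}^T}_\op \le (1-\gamma)^{-1/2}$) together with $\norm{E\widehat{W}^{1/2}}_\op = \norm{\widehat{W}^{1/2}E^T}_\op =: \varepsilon$, one obtains $\norm{\mathcal{E}}_\op \le 2(1-\gamma)^{1/2}\varepsilon + (1-\gamma)\varepsilon^2 \le (1-\gamma)^{1/2}(2+\varepsilon)\varepsilon$. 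From $\tilde{W}-I = -\tilde{W}\mathcal{E}$ and $\norm{\tilde{W}}_\op \le \norm{\widehat{W}^{-1}W}_\op$ (the symmetric positive-definite matrix $\tilde{W} = \widehat{W}^{-1/2}W\widehat{W}^{-1/2}$ has the same eigenvalues as $\widehat{W}^{-1}W$, so its largest eigenvalue is at most $\norm{\widehat{W}^{-1}W}_\op$) we conclude $\norm{\tilde{W}-I}_\op \le (1-\gamma)^{1/2}\norm{\widehat{W}^{-1}W}_\op(2+\varepsilon)\varepsilon$, which is exactly~\eqref{eq:W_diff_estimate} with the stated $C_W$.

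For the second-order estimate I would write $\tilde{W}-I-\tilde{D}^{\mathrm{sym}} = -\tilde{W}\mathcal{E} - \tilde{D}^{\mathrm{sym}} = (-\mathcal{E}-\tilde{D}^{\mathrm{sym}}) - (\tilde{W}-I)\mathcal{E}$. Using $\mathcal{E} = (1-\gamma)\widehat{W}^{1/2}(\widehat{Z}^TE + E^TE + E^T\widehat{Z})\widehat{W}^{1/2}$, the first bracket collapses to $-\mathcal{Q}$ with $\mathcal{Q} = (1-\gamma)(E\widehat{W}^{1/2})^T(E\widehat{W}^{1/2}) \succeq 0$ and $\norm{\mathcal{Q}}_\op = (1-\gamma)\varepsilon^2$, while $\norm{(\tilde{W}-I)\mathcal{E}}_\op \le \norm{\tilde{W}-I}_\op\norm{\mathcal{E}}_\op \le (1-\gamma)\norm{\widehat{W}^{-1}W}_\op(2+\varepsilon)^2\varepsilon^2$. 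Adding the two contributions gives $|u^T(\tilde{W}-I-\tilde{D}^{\mathrm{sym}})u| \le (1-\gamma)\bigl(1 + \norm{\widehat{W}^{-1}W}_\op(2+\varepsilon)^2\bigr)\varepsilon^2\norm{u}_2^2$, i.e. precisely~\eqref{eq:W_deriv_estimate} with the stated $C'_W$.

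There is no conceptual obstacle; the difficulty is purely bookkeeping — keeping each factor $Z$, $\widehat{Z}$, $E$ paired with the correct half-power of $W$ or $\widehat{W}$ so that the only uncontrolled quantity appearing in the constants is $\norm{\widehat{W}^{-1}W}_\op$, and handling the non-symmetry of $D_W$ by passing to symmetric parts, which is legitimate since only $z^T(\cdot)z$ is estimated. The one mildly delicate point is the bound $\norm{\tilde{W}}_\op \le \norm{\widehat{W}^{-1}W}_\op$, which uses that $AB$ and $BA$ share the same spectrum.
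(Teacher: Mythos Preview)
Your proof is correct and follows essentially the same route as the paper's: both expand $W-\widehat{W}$ via the resolvent identity, decompose $M-\widehat{M}$ into first- and second-order terms in $E=Z-\widehat{Z}$, invoke Proposition~\ref{prop:matrix_fun_bound} for $\norm{\widehat{Z}\widehat{W}^{1/2}}_\op$, and handle $D_W$ by passing to its symmetric part. The only cosmetic difference is that you conjugate everything by $\widehat{W}^{1/2}$ up front (working with $\tilde{M}$, $\tilde{W}$, $\mathcal{E}$), which packages the estimates neatly as operator-norm bounds on $\tilde{W}-I$ and $\tilde{W}-I-\tilde{D}^{\mathrm{sym}}$, whereas the paper manipulates $z^T\widehat{W}(\cdots)Wz$ directly; your observation $\norm{\tilde{W}}_\op = \rho(\widehat{W}^{-1}W)\le\norm{\widehat{W}^{-1}W}_\op$ plays the same role as the paper's insertion of $\widehat{W}\widehat{W}^{-1}$ to extract the factor $\norm{\widehat{W}^{-1}W}_\op$.
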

\begin{proof}
Denote  \(M = M[Z] = \gamma I + (1-\gamma)Z^T Z\). We have that
\begin{align}
  W - \widehat W&=  M^{-1} - \widehat M^{-1} = \widehat{M}^{-1}(\widehat{M} - M)M^{-1} = \widehat{W}(\widehat{M} - M)W \label{eq:expand_W1}\\
                &= - \widehat{W}(M - \widehat{M}) \widehat{W}
                   + \widehat{W}(M - \widehat{M})\widehat{W}(M - \widehat{M})W. \label{eq:expand_W2}
\end{align}
Turning now to the terms with differences of \(M\), note that
\begin{align}
   M - \widehat{M} &= (1-\gamma) (Z^TZ-\widehat{Z}^T\widehat{Z}) \notag \\
   &=(1-\gamma) (Z^T(Z-\widehat{Z}) + (Z-\widehat{Z})^T\widehat{Z}) \notag \\
                   &= (1-\gamma) (\widehat{Z}^T(Z-\widehat{Z})+(Z-\widehat{Z})^T\widehat{Z} + (Z-\widehat{Z})^T(Z-\widehat{Z})).\label{eq:expand_M2}
\end{align}
Thus we have with Proposition~\ref{prop:matrix_fun_bound} that
\begin{equation}
\label{eq:M_diff_estimate}
\begin{aligned}
  \|(M - \widehat{M})\widehat{W}\|_{\op}
&\leq (1-\gamma)(\|\widehat{Z}^T(Z-\widehat{Z})\widehat W\|_\op+\|(Z-\widehat{Z})^T\widehat{Z}\widehat W\|_\op+\|(Z-\widehat{Z})^T(Z-\widehat{Z})\widehat W\|_\op)\\
&\leq 
(1-\gamma)(2\|\widehat W^{1/2}\widehat{Z}^T\|_\op+\norm{(Z-\widehat{Z}) \widehat{W}^{1/2}}_{\op})\norm{(Z-\widehat{Z}) \widehat{W}^{1/2}}_{\op}\\
&\leq 
(2(1-\gamma)^{1/2}+(1-\gamma)\norm{(Z-\widehat{Z}) \widehat{W}^{1/2}}_{\op})\norm{(Z-\widehat{Z}) \widehat{W}^{1/2}}_{\op}.
\end{aligned}
\end{equation}
Combining this with~\eqref{eq:expand_W1}
 we already obtain~\eqref{eq:W_diff_estimate}
due to
\begin{align*}
  \abs{z^T (W - \widehat{W}) z }
  &=  \abs{z^T \widehat{W}(M - \widehat{M})W z }
  \leq \norm{z}^2_{\widehat{W}} \norm{\widehat{W}^{1/2}(M - \widehat{M})W\widehat{W}^{-1/2}}_\op
  \\
  &\leq \norm{z}^2_{\widehat{W}} \norm{(M - \widehat{M})\widehat{W}}_\op \norm{\widehat{W}^{-1}W}_\op.
\end{align*}

It remains to derive~\eqref{eq:W_deriv_estimate}.
We start by noticing that
\[
  z^T D_W z = (1/2) z^T (D_W + D_W^T) z
  = - (1-\gamma) z^T\left(\widehat{Z}^T(Z-\widehat{Z}) + (Z-\widehat{Z})^T\widehat{Z} \right)z 
\]
Using this together using~\eqref{eq:expand_W2} and~\eqref{eq:expand_M2} we obtain
\[
z^T (W - \widehat{W} - D_W) z 
= - (1-\gamma) \left(z^T\widehat{W}(Z-\widehat{Z})^T(Z-\widehat{Z}) \widehat{W}z
                   + z^T\widehat{W}(M - \widehat{M})\widehat{W}(M - \widehat{M})Wz \right)
\]
Taking the absolute value of that expression and using the triangle inequality, the first term is estimated by
\[
  \abs{z^T\widehat{W}(Z-\widehat{Z})^T(Z-\widehat{Z}) \widehat{W}z}
  \leq  \norm{z}^2_{\widehat W} \norm{(Z-\widehat{Z}) \widehat{W}^{1/2}}_{\op}^2
\]
Concerning the second term, note that 
\begin{align*}
    \abs{z^T\widehat{W}(M - \widehat{M})\widehat{W}(M - \widehat{M})Wz} &
    \leq \norm{z}^2_{\widehat W}  \|\widehat{W}^{1/2}(M - \widehat{M})\widehat{W}(M - \widehat{M})W\widehat{W}^{-1/2}\|_\op\notag\\
           &
    = \norm{z}^2_{\widehat W}  \|(M - \widehat{M})\widehat{W}(M - \widehat{M})\widehat W \widehat W^{-1}W\|_\op\notag\\
      &
    \leq \norm{z}^2_{\widehat W} \| \widehat W^{-1}W\|_\op \|(M - \widehat{M})\widehat{W}\|^2_\op. 
\end{align*}
and together with~\eqref{eq:M_diff_estimate} we obtain the second result~\eqref{eq:W_deriv_estimate}.
\end{proof}

\begin{proposition}\label{prop:rank_one_fro}
The mapping
\[
\R^K \ni z \mapsto \frac{z z^T}{\norm{z}} \in \R^{K\times K}
\]
is Lipschitz-continuous with constant three in the Frobenius norm.
\end{proposition}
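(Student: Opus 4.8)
The plan is to reduce the statement to an elementary estimate through a careful algebraic splitting. Write \(f(z) = zz^T/\norm{z}\) for \(z \neq 0\) and \(f(0) = 0\); note that \(\norm{f(z)}_\fro = \norm{z}\), since \(\norm{uu^T}_\fro = \norm{u}^2\) for any \(u\). Given \(z, z' \in \R^K\), I would first dispose of the degenerate case: if \(z' = 0\) then \(\norm{f(z) - f(z')}_\fro = \norm{f(z)}_\fro = \norm{z} = \norm{z - z'} \leq 3\norm{z - z'}\), and symmetrically if \(z = 0\). So from now on assume \(z, z' \neq 0\), and without loss of generality \(\norm{z} \geq \norm{z'}\).

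The key step is to split the difference as
\[
f(z) - f(z') = \frac{zz^T - z'(z')^T}{\norm{z}} + z'(z')^T\left(\frac{1}{\norm{z}} - \frac{1}{\norm{z'}}\right),
\]
which is chosen so that the (possibly small) denominator \(\norm{z}\) only appears in front of the term built from the \emph{larger} vector \(z\). For the first term I would use the identity \(zz^T - z'(z')^T = z(z - z')^T + (z - z')(z')^T\) together with \(\norm{uv^T}_\fro = \norm{u}\norm{v}\) to get \(\norm{zz^T - z'(z')^T}_\fro \leq (\norm{z} + \norm{z'})\norm{z - z'} \leq 2\norm{z}\,\norm{z - z'}\); dividing by \(\norm{z}\) bounds the first term by \(2\norm{z - z'}\). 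For the second term, \(\norm{z'(z')^T}_\fro = \norm{z'}^2\) and, by the reverse triangle inequality, \(\abs{1/\norm{z} - 1/\norm{z'}} = \abs{\norm{z} - \norm{z'}}/(\norm{z}\norm{z'}) \leq \norm{z - z'}/(\norm{z}\norm{z'})\), so that term is bounded by \((\norm{z'}/\norm{z})\,\norm{z - z'} \leq \norm{z - z'}\). Adding the two estimates yields \(\norm{f(z) - f(z')}_\fro \leq 3\norm{z - z'}\).

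The only point that requires care — and the reason the constant comes out as \(3\) rather than blowing up — is that the naive splitting without first ordering \(\norm{z} \geq \norm{z'}\) leaves the uncontrolled ratio \(\norm{z'}/\norm{z}\); enforcing the ordering is precisely what makes every occurring ratio at most one. Beyond that, the argument is a routine application of \(\norm{uv^T}_\fro = \norm{u}\norm{v}\) and the reverse triangle inequality, so I do not anticipate any substantive obstacle.
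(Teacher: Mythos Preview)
Your proof is correct. The paper's own proof is very close in spirit but uses a slightly different decomposition: instead of ordering the norms and using your two-term split, it brings both fractions over the common denominator \(\norm{z}\norm{z'}\) and writes the numerator as a sum of three rank-one terms,
\[
z'(z')^T\norm{z} - z z^T\norm{z'} \;=\; (z'-z)(z')^T\norm{z} \;+\; z(z')^T\bigl(\norm{z}-\norm{z'}\bigr) \;+\; z(z'-z)^T\norm{z'},
\]
each of which, after dividing by \(\norm{z}\norm{z'}\) and using \(\norm{uv^T}_\fro=\norm{u}\norm{v}\) together with \(\bigl|\norm{z}-\norm{z'}\bigr|\le\norm{z-z'}\), contributes exactly \(\norm{z-z'}\). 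The upshot is that the paper's three-term split is symmetric in \(z\) and \(z'\) and does not need your ``without loss of generality \(\norm{z}\ge\norm{z'}\)'' step; your two-term split is arguably more natural but, as you correctly point out, does require that ordering to keep the ratio \(\norm{z'}/\norm{z}\) bounded by one. Both routes are equally elementary and arrive at the same constant~3.
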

\begin{proof}
  First, we consider the case where \(z = 0\). In that case
  \[
    \norm*{\frac{z' z'^T}{\norm{z'}} - \frac{z z^T}{\norm{z}}}_\fro
    = \norm*{\frac{z' z'^T}{\norm{z'}}}_\fro = \norm{z'}_2 = \norm{z'-z}_2
  \]
  In the nonzero case
  \[
    \norm*{\frac{z' z'^T}{\norm{z'}} - \frac{z z^T}{\norm{z}}}_\fro
    = \frac{1}{\norm{z'}\norm{z}}
    \norm*{(z' - z) z'^T\norm{z} + z z'^T (\norm{z} - \norm{z'}) + z (z'^T - z^T)\norm{z'}}_\fro
    \leq 3 \norm{z'-z}_2 \qedhere
  \]
\end{proof}

We are ready to prove the main result.
\begin{proof}[Proof of Theorem~\ref{thm:first_order_approx}]
Start with the identity for \(a,\widehat{a} > 0\)
\[
  \sqrt{a} - \sqrt{\widehat{a}}
  = \frac{2\sqrt{a}\sqrt{\widehat{a}} - 2\widehat{a}}{2\sqrt{\widehat{a}}}
  = \frac{a - \widehat{a} - \big(\sqrt{a} - \sqrt{\widehat{a}}\big)^2}{2\sqrt{\widehat{a}}}
  = \frac{a - \widehat{a}}{2\sqrt{\widehat{a}}}
  - \frac{(a - \widehat{a})^2}{2\sqrt{\widehat{a}}\big(\sqrt{a} + \sqrt{\widehat{a}}\big)^2}.
\]
Thus, denote $W = W[Z]$ and $\widehat{W} = W[\widehat{Z}]$, where both are invertible, and using
\(a= z_n^T W z_n\) and \(\widehat{a} = z_n^T \widehat{W} z_n\) for \(z_n \neq 0\) we have
\begin{align}
  \|Z\|_{W,1} - \|Z\|_{\widehat W,1}
  &= \sum_{n\colon z_n\neq 0} \sqrt{z_n^T W z_n}
   - \sqrt{z_n^T \widehat W z_n}\notag\\
       &=
         \sum_{n\colon z_n\neq 0} \frac{z_n^T (W-\widehat W) z_n}{2\|z_n\|_{\widehat W}}
         -\frac{(z_n^T (W - \widehat W) z_n)^2}{2\|z_n\|_{\widehat W}(\|z_n\|_{ W} +\|z_n\|_{\widehat W})^2}.
         \label{eq:expand_sqrt}
\end{align}
We define the intermediate value
\[
  \Lambda = -\sum_{n=1}^N\frac{1-\gamma}{\norm{z_n}_{\widehat{W}}} \widehat{W} z_n z_n^T \widehat{W}
\]
and note that by cyclic permutation
\[
\langle  \widehat{Z}\Lambda , (Z - \widehat{Z}) \rangle
=\tr\left(\Lambda \widehat{Z}^T (Z - \widehat{Z})\right)
= - \sum_{n=1}^N \frac{2(1-\gamma)z_n^T\widehat{W}\widehat{Z}^T(Z-\widehat{Z})\widehat{W}z_n}{2\|z_n\|_{\widehat W}}
= \sum_{n=1}^N \frac{z_n^TD_W z_n}{2\|z_n\|_{\widehat W}},\\
\]
where \(D_W\) is as in Proposition~\ref{prop:W_perturb}. Subtracting this from both sides of~\eqref{eq:expand_sqrt}, it follows
\[
  \|Z\|_{W,1} - \|Z\|_{\widehat W,1} - \langle  \widehat{Z}\Lambda , (Z - \widehat{Z}) \rangle
  = R_0(Z,\widehat{Z}).
\]
where
\[
  R_0(Z,\widehat{Z}) =
\sum_{n\colon z_n\neq 0} \frac{z_n^T (W - \widehat{W}- D_W) z_n}{2\|z_n\|_{\widehat W}}
         -\frac{(z_n^T (W - \widehat W) z_n)^2}{2\|z_n\|_{\widehat W}(\|z_n\|_{ W} +\|z_n\|_{\widehat W})^2}
\]
By the triangle inequality and Proposition~\ref{prop:W_perturb} we obtain
\begin{align*}
|R_0(Z,\widehat Z)|
  &\leq
    \sum_{n\colon z_n\neq 0} \frac{\norm{z_n}_{\widehat{W}}}{2}
    \left(C'_W + C^2_W\right)\norm{(Z-\widehat{Z})\widehat{W}^{1/2}}_{\op} 
\leq \frac{L_1}{2}  \norm{Z-\widehat{Z}}^2_{\widehat{W}, 2}
\end{align*}
where 
\[
  L_1 = \left(C'_W + C^2_W\right)\|Z\|_{\widehat{W}, 1}.
\]
To obtain the final result, it remains to estimate
the difference of \(\Lambda\) and \(\widehat{\Lambda}\).
We start with
\[
  \widehat{W}^{-1/2}(\Lambda-\widehat{\Lambda})\widehat{W}^{-1/2}
  =
-(1-\gamma)\sum_{n=1}^N\frac{1}{\norm{z_n}_{\widehat{W}}}  \widehat{W}^{1/2}z_n z_n^T\widehat{W}^{1/2} -
\frac{1}{\norm{\widehat{z}_n}_{\widehat{W}}}  \widehat{W}^{1/2}\widehat{z}_n^T \widehat{z}_n\widehat{W}^{1/2}
\]
By the triangle inequality and Proposition~\ref{prop:rank_one_fro} we have
\[
  \norm{(\Lambda-\widehat{\Lambda})\widehat{W}^{-1}}_\fro =
  \norm{\widehat{W}^{-1/2}(\Lambda-\widehat{\Lambda})\widehat{W}^{-1/2}}_\fro
  \leq 3(1-\gamma) \sum_{n=1}^N \norm{z_n - \widehat{z}_n}_{\widehat{W}} 
\]
and thus
\begin{align*}
\abs{\langle  \widehat{Z}(\Lambda - \widehat{\Lambda}), (Z - \widehat{Z}) \rangle}
=
  &
  \abs*{\tr\left((\Lambda-\widehat{\Lambda}) \widehat{Z}^T (Z - \widehat{Z})\right)}\\
 \leq \;&
\norm{(\Lambda-\widehat{\Lambda})\widehat{W}^{-1}}_\fro
\|\widehat{W} \widehat{Z}^T(Z-\widehat{Z})\|_\fro\\
\leq \;& 3(1-\gamma) \norm{Z-\widehat{Z}}_{\widehat{W},1}
\norm{\widehat{W}^{1/2} \widehat{Z}^T}_{\op}\norm{Z-\widehat{Z}}_{\widehat{W},2}\\
\leq \;& 3\sqrt{N}(1-\gamma)^{1/2}\norm{Z-\widehat{Z}}^2_{\widehat{W},2},
\end{align*}
where we have used again Proposition~\ref{prop:matrix_fun_bound} and the inequality \(\norm{\cdot}_{\widehat{W},1}\leq\sqrt{N}\norm{\cdot}_{\widehat{W},2}\).

The previous estimates combined lead to the desired expansion of \(\Psi_\gamma\)~\eqref{eq:expand_Psi}
with the estimate of the remainder~\eqref{eq:estimate_R} with the constant
\[
  L = 6\sqrt{N}(1-\gamma)^{1/2} + L_1.
\]
  The constant \(L\) depends on 
  \(\norm{(Z-\widehat{Z})\widehat{W}^{1/2}}_{\op}\leq \norm{Z-\widehat{Z}}_{\widehat{W},2}\)
  and \(\norm{\widehat{W}^{-1}W}_\op\) through Proposition~\ref{prop:W_perturb}, and on \(\norm{Z}_{\widehat{W}, 1}\).
  Moreover, the latter two quantities can also be estimated in terms of \(\norm{Z-\widehat{Z}}_{\widehat{W},2}\).
  First, we have
  \[
    \norm{\widehat{W}^{-1}W}_\op
    = \norm{I + (\widehat{M} - M)\widehat{W}\widehat{W}^{-1}W}_\op
    \leq 1 + \norm{(\widehat{M} - M)\widehat{W}}_\op\norm{\widehat{W}^{-1}W}_\op
  \]
  and substituting \(e_M = \norm{(\widehat{M} - M)\widehat{W}}_\op\) and rearranging we obtain
  \((1 - e_M) \norm{\widehat{W}^{-1}W}_\op \leq 1\), i.e.,
  \(\norm{\widehat{W}^{-1}W}_\op \leq 1/(1-e_M) = 1 + e_M/(1-e_M)\).
  Now, we can apply~\eqref{eq:M_diff_estimate}:
  \(e_M \leq (2 + \norm{(Z-\widehat{Z})\widehat{W}^{1/2}}_\op)\norm{(Z-\widehat{Z})\widehat{W}^{1/2}}_\op\).
  Second, we have
  \[
    \norm{Z}_{\widehat{W}, 1}
    \leq \norm{\widehat{Z}}_{\widehat{W}, 1} + \norm{Z-\widehat{Z}}_{\widehat{W}, 1}
  \]
  Thus,
  taking into account the inequalities \(\norm{\cdot}_{\widehat{W},1}
  \leq\sqrt{N}\norm{\cdot}_{\widehat{W},2} \leq \sqrt{NK}\norm{\cdot\, \widehat{W}}_{\op}\)
  and Proposition~\ref{prop:matrix_fun_bound},
  it can be seen that for \(\norm{Z-\widehat{Z}}_{\widehat{W},2}\) smaller than a universal constant \(\bar{\Delta}\),
  the constant \(L\) can be bounded by some universal constant \(\bar{L}\).
\end{proof}

Theorem~\ref{thm:first_order_approx} allows to explicitly compute the Clarke subdifferential with the convex subdifferential of the model function for \(\Psi_\gamma\) defined as motivated by the previous theorem
\begin{equation}
\label{eq:model_Psi_app}
  m^{\Psi}_{\widehat{Z}}(Z) =
\norm{Z}_{\widehat{W},1}
+ \langle \widehat{Z}\widehat{\Lambda}, Z - \widehat{Z}\rangle.
\end{equation}
\begin{theorem}\label{thm:clark_subdifferential}
For all \(\widehat Z\) with $\widehat M = M[\widehat{Z}]$ invertible we have
\[
\partial_C \Psi_\gamma(\widehat Z) = \partial m^{\Psi}_{\widehat Z}(\widehat Z).
\]
\end{theorem}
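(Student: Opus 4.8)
The plan is to deduce Theorem~\ref{thm:clark_subdifferential} from Theorem~\ref{thm:first_order_approx} together with the calculus of the Clarke generalized gradient. By Theorem~\ref{thm:first_order_approx}, on a neighborhood of $\widehat{Z}$ we may write
\[
\Psi_\gamma(Z) = m^{\Psi}_{\widehat{Z}}(Z) + R(Z,\widehat{Z}),
\qquad |R(Z,\widehat{Z})| \le \tfrac{\bar{L}}{2}\,\norm{Z-\widehat{Z}}_{\widehat{W},2}^2,
\]
with $m^{\Psi}_{\widehat{Z}}$ the convex model function of~\eqref{eq:model_Psi_app} (a weighted $\ell_{2,1}$ norm plus an affine term). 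Since $\Psi_\gamma$ is locally Lipschitz near $\widehat{Z}$ by Proposition~\ref{prop:relax_basic_props} (as $\widehat{M}$ is invertible), its Clarke subdifferential is well defined there, and $\partial_C m^{\Psi}_{\widehat{Z}}(\widehat{Z}) = \partial m^{\Psi}_{\widehat{Z}}(\widehat{Z})$ by convexity. Thus, once I establish that $R(\cdot,\widehat{Z})$ is \emph{strictly differentiable} at $\widehat{Z}$ (in the sense of Clarke) with $\nabla R(\widehat{Z}) = 0$, the sum rule for the Clarke generalized gradient with a strictly differentiable summand \cite[Chapter~10]{clarke2013functional} yields $\partial_C \Psi_\gamma(\widehat{Z}) = \partial_C m^{\Psi}_{\widehat{Z}}(\widehat{Z}) + \{\nabla R(\widehat{Z})\} = \partial m^{\Psi}_{\widehat{Z}}(\widehat{Z})$, which is the claim.

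The substance of the argument is therefore the strict differentiability of the remainder, and here I stress that the quadratic bound above, by itself, only gives \emph{Fréchet} differentiability of $R(\cdot,\widehat{Z})$ at $\widehat{Z}$, which is not enough to pin down the Clarke subdifferential; strict differentiability additionally requires smallness of $R(Z,\widehat{Z}) - R(Z',\widehat{Z})$ for \emph{two} points $Z,Z'\to\widehat{Z}$. To get this I will invoke Theorem~\ref{thm:first_order_approx} not only at $\widehat{Z}$ but at every reference point $Z'$ in a fixed neighborhood of $\widehat{Z}$. Inspecting its proof, the constants $\bar{L}$ and $\bar{\Delta}$ depend only on $\norm{Z'}_{\widehat{W},1}$ and $\norm{\widehat{W}^{-1}W[Z']}_{\op}$ (besides $N$ and $\gamma$), quantities that stay uniformly bounded for $Z'$ near $\widehat{Z}$ by continuity of $Z'\mapsto W[Z']$; hence there are $L^{\ast},\Delta^{\ast} > 0$ (depending on $\widehat{Z}$) such that $\Psi_\gamma(Z) = m^{\Psi}_{Z'}(Z) + R(Z,Z')$ with $|R(Z,Z')| \le \tfrac{L^{\ast}}{2}\norm{Z-Z'}_{\widehat{W},2}^2$ whenever $\norm{Z'-\widehat{Z}}, \norm{Z-Z'} \le \Delta^{\ast}$.

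Subtracting the expansion at $\widehat{Z}$ from the expansion at $Z'$, and using $m^{\Psi}_{Z'}(Z) - m^{\Psi}_{Z'}(Z') = \big(\norm{Z}_{W[Z'],1} - \norm{Z'}_{W[Z'],1}\big) + \langle Z'\Lambda', Z-Z'\rangle$ with $\Lambda'$ the matrix $\widehat{\Lambda}$ of Theorem~\ref{thm:first_order_approx} evaluated at $Z'$, I obtain
\[
R(Z,\widehat{Z}) - R(Z',\widehat{Z}) = \Delta_1 + \Delta_2 + R(Z,Z'),
\]
where $\Delta_1 = \big(\norm{Z}_{W[Z'],1} - \norm{Z}_{\widehat{W},1}\big) - \big(\norm{Z'}_{W[Z'],1} - \norm{Z'}_{\widehat{W},1}\big)$ and $\Delta_2 = \langle Z'\Lambda' - \widehat{Z}\widehat{\Lambda},\, Z-Z'\rangle$. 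Here $\Delta_1$ is the row-wise increment of the map $z \mapsto \norm{z}_{W[Z']} - \norm{z}_{\widehat{W}}$, which (by the same square-root/quotient estimates used for Theorem~\ref{thm:first_order_approx}) is Lipschitz with constant $O(\norm{W[Z']-\widehat{W}}_{\op}) = O(\norm{Z'-\widehat{Z}})$, so $|\Delta_1| \le C\,\norm{Z'-\widehat{Z}}\,\norm{Z-Z'}$. For $\Delta_2$, the map $Z'\mapsto Z'\Lambda'$ is locally Lipschitz near $\widehat{Z}$ — combining Proposition~\ref{prop:rank_one_fro} for the rank-one factors $z'_n (z'_n)^{T}/\norm{z'_n}_{W[Z']}$ with the smoothness of $Z'\mapsto W[Z']$ where $M[Z']$ is invertible, the rows with $\widehat{z}_n = 0$ contributing $0$ — so $|\Delta_2| \le C\,\norm{Z'-\widehat{Z}}\,\norm{Z-Z'}$. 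Finally $|R(Z,Z')| \le \tfrac{L^{\ast}}{2}\norm{Z-Z'}_{\widehat{W},2}^2$. Adding up, $|R(Z,\widehat{Z}) - R(Z',\widehat{Z})| = o(\norm{Z-Z'})$ as $Z,Z'\to\widehat{Z}$, which is exactly strict differentiability of $R(\cdot,\widehat{Z})$ at $\widehat{Z}$ with zero derivative, and the theorem follows from the sum rule as above. The main obstacle is precisely this strict-differentiability step: the pointwise second-order estimate of Theorem~\ref{thm:first_order_approx} must be upgraded by re-expanding $\Psi_\gamma$ about moving base points, with the bookkeeping needed to keep the constants uniform; the estimates on $\Delta_1$ and $\Delta_2$ themselves are routine and parallel those already appearing in the proof of Theorem~\ref{thm:first_order_approx}.
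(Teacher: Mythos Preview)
Your argument is correct and the decomposition $R(Z,\widehat Z)-R(Z',\widehat Z)=\Delta_1+\Delta_2+R(Z,Z')$ together with the three bounds does yield strict differentiability of $R(\cdot,\widehat Z)$ at $\widehat Z$ with vanishing derivative, after which the Clarke sum rule with a strictly differentiable summand gives the result. Your observation that the pointwise quadratic bound from Theorem~\ref{thm:first_order_approx} alone is insufficient, and that one must re-expand at moving base points $Z'$ with locally uniform constants, is exactly the right diagnosis; the uniformity follows from the continuity of $Z'\mapsto W[Z']$ near $\widehat Z$ as you say (and is implicit in the last paragraph of the proof of Theorem~\ref{thm:first_order_approx}).

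The paper, however, takes a shorter and more direct route. Rather than isolating a strictly differentiable remainder, it uses the gradient-sequence characterization of the Clarke subdifferential (\cite[Theorem~10.27]{clarke2013functional}): both $\Psi_\gamma$ and $m^\Psi_{\widehat Z}$ are Fr\'echet differentiable at any $Z$ with all rows nonzero, and at such points one writes down the two gradients explicitly and checks that for every sequence $Z_k\to\widehat Z$ (with $z^k_n\neq 0$) the limits $\lim_k\nabla\Psi_\gamma(Z_k)$ and $\lim_k\nabla m^\Psi_{\widehat Z}(Z_k)$ coincide whenever either exists. Since the Clarke subdifferential is the convex hull of such limits, the two subdifferentials agree. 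This avoids the moving-base-point bookkeeping entirely. Your approach, by contrast, packages everything into the abstract strict-differentiability-plus-sum-rule mechanism and never needs explicit gradient formulas; it would transfer verbatim to any functional admitting a convex model with second-order remainder uniform in the base point, whereas the paper's argument is tailored to the specific row-wise structure of $\Psi_\gamma$.
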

\begin{proof}
Assume first that \(Z\) is given with \(z_n \neq 0\) for all \(n\). Then, \(\Psi_\gamma\) is Frechet differentiable with
\[
  [\nabla \Psi_\gamma(Z)]_n
  = \frac{z_n}{\norm{z_n}_{W}} W - (1-\gamma)z_n \sum_{i=1}^N \frac{1}{\norm{z_i}_{W}} W z_i^T z_i W
  \quad\text{where } W = W[Z],
\]
which is obtained using the differentiability of \(\norm{Z}_{W,1}\) for fixed \(W\) and the chain rule.
On the other hand \(m^\Psi_{\widehat{Z}}(Z)\) is also differentiable under this assumption with
\[
[\nabla m^\Psi_{\widehat{Z}}(Z)]_n 
= \frac{z_n}{\norm{z_n}_{\widehat{W}}} \widehat{W}
- (1-\gamma)\widehat{z}_n\sum_{i=1}^N \frac{1}{\norm{\widehat{z}_i}_{\widehat{W}}} \widehat{W} \widehat{z}_i^T \widehat{z}_i \widehat{W}
\]
Furthermore for any sequence \(Z_k \to \widehat{Z}\) with the above property it holds
\begin{equation}
\label{eq:limit_differentials}
\lim_k \nabla m^{\Psi}_{\widehat{Z}}(Z_k) = \lim_k \nabla \Psi_\gamma(Z_k),
\end{equation}
assuming that either of the limits exists, i.e., \(z_n/\norm{z_n}_{\widehat{W}} \to s_n\) for all \(n\).
Now, by the characterization of the Clarke subdifferential (\cite[Theorem~10.27]{clarke2013functional}) we know that
\[
  \partial_C\Psi_\gamma(\widehat{Z})
  = \operatorname{conv}\operatorname*{Limits}_{Z\to\widehat{Z}: z_n \neq 0\, \forall n}  \nabla\Psi_\gamma(Z)
\]
and also that
\[
  \partial m^{\Psi}_{\widehat{Z}}(\widehat{Z})
  = \partial_C m^{\Psi}_{\widehat{Z}}(\widehat{Z})
  = \operatorname{conv}\operatorname*{Limits}_{Z\to\widehat{Z}: z_n \neq 0\, \forall n}  \nabla m^{\Psi}_{\widehat{Z}}(Z).
\]
Thus, by~\eqref{eq:limit_differentials}, both differentials agree.
\end{proof}

\end{document}